\newcommand{\tr}{\text{tr}}
\newtheorem{thm}{Theorem}[section]
\newtheorem{lem}[thm]{Lemma}
\newtheorem{prop}[thm]{Proposition}
\newtheorem{defn}[thm]{Definition}
\numberwithin{equation}{section}
\begin{document}

\title{\bf Flat Higgs bundles  over non-compact affine Gauduchon manifolds}
\author{Zhenghan Shen, Chuanjing Zhang and Xi Zhang}

\address{Zhenghan Shen\\School of Mathematical Sciences\\
University of Science and Technology of China\\
Hefei, 230026,P.R. China\\}\email{szh2015@mail.ustc.edu.cn}

\address{Chuanjing Zhang\\School of Mathematical Sciences\\
University of Science and Technology of China\\
Hefei, 230026,P.R. China\\}\email{chjzhang@mail.ustc.edu.cn}
\address{Xi Zhang\\School of Mathematical Sciences\\
University of Science and Technology of China\\
Hefei, 230026,P.R. China\\ } \email{mathzx@ustc.edu.cn}

\subjclass[]{53C07, 57N16}
\keywords{Affine Gauduchon manifolds,\ Flat Higgs bundles,\ Affine Hermitian-Yang-Mills flow,\ Affine Hermitian-Einstein metric }

\maketitle

\begin{abstract}
In this paper, we use the affine Hermitian-Yang-Mills flow to prove a generalized Donaldson-Uhlenbeck-Yau theorem on flat Higgs bundles over a class of non-compact affine Gauduchon manifolds.
\end{abstract}

\vskip 0.2 true cm

%------------------------------------------------------------------------------------%

\pagestyle{myheadings}
\markboth{\rightline {\scriptsize Z. Shen et al.}}
         {\leftline{\scriptsize Flat Higgs bundles  over non-compact affine Gauduchon manifolds}}

\bigskip
\bigskip

%------------------------------------------------------------------------------------%
%------------------------------------------------------------------------------------%

\section{ Introduction}

An affine manifold is a smooth real manifold $X$ equipped with a flat torsion-free connection $D$ on its tangent bundle $TX$, or equivalently an affine structure which is provided by an atlas of $X$ whose transition functions are affine maps of the form
\begin{equation}
  x\mapsto Ax+b,
\end{equation}
where $A\in {\rm GL}(n,\mathbb{R})$ and $b\in \mathbb{R}^n$. An affine manifold $X$ is called special if it admits a volume form $\nu$ which is covariant constant with respect to the flat connection $D$, or  equivalently, the holonomy group of $D$ lies in the special affine subgroup, i.e. $A \in {\rm SL}(n,\mathbb{R}) $. Special affine manifolds play a role in the Strominger-Yau-Zaslow conjecture (\cite{StroYau96}). Cheng and Yau (\cite{CY82}) proved that on a closed special affine manifold, an affine K\"ahler metric (if it exists) can be deformed to a flat metric by adding the Hessian of a smooth function. This result shows that  the compact special affine manifolds are all torus quotients. A Riemannian metric $g$ on an affine manifold is said to be affine Gauduchon if
\begin{equation}
  \partial\bar{\partial}(\omega_g^{n-1})=0,
\end{equation}
where $\partial$,$\bar{\partial}$ and $\omega_g$ are defined in section \ref{Pre_res}. On a compact special affine manifold $M$, every conformal class of Riemannian metrics contains an affine Gauduchon metric, which is unique up to scaling by a constant (see \cite{Loftin09}).

%Let $(X,\omega)$ be a compact K\"ahler manifold and $(E,\bar{\partial}_E,\theta)$ be a Higgs bundle over $X$. That is, $(E,\bar{\partial}_E)$ is a holomorphic vector bundle coupled with a Higgs filed $\theta \in \Omega_X^{1,0}({\rm End}(E))$ such that $\bar{\partial}_E\theta=0$ and $\theta \wedge \theta =0$. The Higgs bundle $(E,\bar{\partial}_E,\theta)$ is called stable if for every proper saturated $\theta$-invariant sub-Higgs sheaf $F\subset E$,
%\begin{equation}
%\frac{{\rm deg}_{\omega}(F)}{{\rm rank}(F)}<\frac{{\rm deg}_{\omega}(E)}{{\rm rank}(E)},
%\end{equation}
%where ${\rm deg}_{\omega}(E)$ is the $\omega$-degree of $E$ given by
%\begin{equation}
%{\rm deg}_{\omega}(E)=\int_Xc_{1}(E)\wedge \frac{\omega^{n-1}}{(n-1)!}.
%\end{equation}
%
%Let $H$ be a Hermitian metric on the bundle $E$. We say $H$ is a Hermitian-Einstein metric on Higgs bundle $(E,\bar{\partial}_E,\theta)$ if the curvature of the Hitchin-Simpson connection satisfies the Einstein condition, i.e.
%\begin{equation}
%\sqrt{-1}\Lambda_{\omega}(F_H+[\theta,\theta^{*H}])=\lambda\cdot {\rm Id}_E,
%\end{equation}
%where $\Lambda_{\omega}$ denotes the contraction with $\omega$ and $\lambda$ is a constant.

A Higgs bundle $(E,\bar{\partial}_E,\theta)$ is a holomorphic vector bundle $E$ coupled with a Higgs filed $\theta \in \Omega_X^{1,0}({\rm End}(E))$ such that $\bar{\partial}_E\theta=0$ and $\theta \wedge \theta =0$. Higgs bundles were introduced by Hitchin (\cite{Hitchin87}) in his study of the self duality equations. They have rich structures and play an important role in many areas including gauge theory, K\"ahler and hyper-K\"ahler geometry, group representations and nonabelian Hodge theory. When $(X,\omega)$ is a compact K\"ahler manifold, Hitchin (\cite{Hitchin87}) and Simpson (\cite{Sim88}) obtained a Higgs bundle version of Donaldson-Uhlenbeck-Yau theorem (\cite{NS65}, \cite{Do85}, \cite{UY86}, \cite{Do87}), i.e. they proved that a Higgs bundle admits the Hermitian-Einstein metric if and only if it's Higgs poly-stable. Simpson (\cite{Sim88}) also considered some non-compact K\"ahler manifolds case. He introduced the concept of analytic stability for Higgs bundles and proved that the analytic stability implies the existence of Hermitian-Einstein metric. In \cite{ZZZ}, the authors also proved a generalized Donaldson-Uhlenbeck-Yau theorem on Higgs bundles over a class of non-compact Gauduchon manifolds.

When the base space $(X,D,g,\nu)$ is a compact special affine manifold equipped with an affine Gauduchon metric $g$, Loftin (\cite{Loftin09}) established a Donaldson-Uhlenbeck-Yau type theorem. He proved that if a flat complex vector bundle $E$ over $(X,D,g,\nu)$ is stable, then there is an affine Hermitian-Einstein metric on $E$. Biswas, Loftin and Stemmler (\cite{BLS13}) also studied the flat Higgs bundle case. The definition of the flat Higgs bundle will be introduced in section \ref{Pre_res}. They proved  that a flat Higgs bundle over a compact special affine Gauduchon manifold  is poly-stable if and only if it admits an affine Hermitian-Einstein metric. There are many other interesting and important works related (\cite{ConsulPrada03}, \cite{Biquard96}, \cite{BisSch09}, \cite{JostZuo96}, \cite{JN99}, \cite{LZZ17}, \cite{LZZ18}, \cite{LZ15},
\cite{Moch06}, \cite{Moch09}, \cite{WZ11}).

In this paper, we study the non-compact and affine Gauduchon case. In the sequel, we always assume that $(X,D,g,\nu)$ is a special affine Gauduchon manifold unless otherwise stated. By Simpson(\cite{Sim88}), we will make the following three assumptions:

{\bf Assumption 1.} $(X,D,g,\nu)$ has finite volume.

{\bf Assumption 2.} There exists a non-negative exhaustion function $\phi$ with ${\rm tr}_g\partial \bar{\partial}\phi$ bounded.

{\bf Assumption 3.} There is an increasing function $a:[0,+\infty)\rightarrow [0,+\infty)$ with $a(0)=0$ and $a(x)=x$ for $x>1$ such that if $f$ is a bounded positive function on $X$ with ${\rm tr}_g\partial\bar{\partial}f\geq -B$, then
\begin{equation}
\sup\limits_X|f|\leq C(B)a\left(\int_X|f|\frac{\omega^n_g}{\nu}\right),
\end{equation}
where $B$ is a constant. Furthermore, if ${\rm tr}_g\partial\bar{\partial}f\geq 0$, then ${\rm tr}_g\partial\bar{\partial}f=0$.

In the non-compact and affine Gauduchon case, we fix a background metric $H_0$ on the flat Higgs bundle $(E,\nabla,\theta)$, where $\nabla$ is the flat connection on $E$, and define the analytic degree of $E$ to be a real number
\begin{equation}
  \deg_g(E,H_0)=\frac{1}{n}\int_{X}{\rm tr}\left({\rm tr}_g(F_{H_0}+[\theta,\theta^{*H_0}])\right)\frac{\omega_g^n}{\nu},
\end{equation}
where $F_{H_0}$ denotes the extended curvature form which is defined in section \ref{Pre_res}.
For the flat sub-Higgs bundle $V\subset E$, we denote $F_V$ by the extended curvature form of the extended Hermitian connection on $V$ with respect to the Hermitian metric $H_0|_V$ which is the restriction of $H_0$ to $V$. According to Chern-Weil formula, we can define the analytic degree of every $\theta$-invariant flat sub-Higgs bundle $V$ of $(E,\nabla,\theta)$ by
\begin{equation}
  \deg_g(V,H_0)=\frac{1}{n}\int_X\left({\rm tr}(\pi{\rm tr}_g(F _{H_0}+[\theta,\theta^{*H_0}]))-|D''\pi|_{H_0}^2\right)\frac{\omega^n_g}{\nu},
\end{equation}
where $D''=\bar{\partial}+\theta$ and $\pi$ denotes the projection onto $V$ with respect to the metric $H_0$. Following Simpson (\cite{Sim88}), we say that the flat Higgs bundle $(E,\nabla,\theta)$ is $H_0$-analytic stable (semi-stable) if for every proper $\theta$-invariant flat sub-Higgs bundle $V\subset E$, we have
\begin{equation}
  \frac{\deg_g(V,H_0)}{{\rm rank}(V)}<(\leq)\frac{\deg_g(E,H_0)}{{\rm rank}(E)}.
\end{equation}

\begin{defn}
  An affine Hermitian-Einstein metric on a flat Higgs bundle $(E,\nabla,\theta)$ is a Hermitian metric $H$   satisfying the equation
  \begin{equation}
   {\rm tr}_g(F_H+[\theta,\theta^{*H}])=\lambda\cdot {\rm Id}_E,
  \end{equation}
  for some constant scalar $\lambda$, which is called the Einstein factor.
\end{defn}

In this paper, we will show that, under some assumptions on the special affine Gauduchon manifold $(X,D,g,\nu)$, the stability of $(E,\nabla,\theta)$ implies the existence of affine Hermitian-Einstein metric on it, i.e.  we obtain the following Donaldson-Uhlenbeck-Yau type theorem.
\begin{thm}\label{Maintheorem}
Let $(X,D,g,\nu)$ be a non-compact special affine Gauduchon manifold satisfying the Assumptions 1,2,3 and %$|\frac{\bar{\partial}\omega^{n-1}}{\nu}|_g\in L^2(X)$,
$|d\omega_g|_g\in L^{\infty}(X)$, $(E,\nabla,\theta)$ be a flat Higgs bundle over $X$ with a background Hermitian metric $H_0$ satisfying $\sup\limits_X|{\rm tr}_g(F_{H_0}+[\theta,\theta^{*H_0}])|_{H_0}<+\infty$. If $(E,\nabla,\theta)$ is $H_0$-analytic stable, then there exists a metric $H$ with $D''(\log H_0^{-1}H)\in L^2$, $H$ and $H_0$  mutually bounded, such that
\begin{equation}
{\rm tr}_g(F_H+[\theta,\theta^{*H}])=\lambda \cdot {\rm Id}_E,
\end{equation}
where the constant $\lambda=\frac{{\rm deg}_{g}(E,H_0)}{{\rm rank}(E){\rm Vol}(X)}$.
\end{thm}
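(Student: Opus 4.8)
The plan is to follow the heat-flow strategy of Simpson adapted to the affine setting, exactly parallel to the approach of Loftin and of Biswas-Loftin-Stemmler in the compact case, but carrying out the delicate non-compact estimates as in Simpson's treatment of non-compact K\"ahler manifolds. First I would fix the background metric $H_0$ and consider the \emph{affine Hermitian-Yang-Mills flow}
\begin{equation}
H^{-1}\frac{\pt H}{\pt t}=-2\left(\tr_g(F_H+[\theta,\theta^{*H}])-\lmd\cdot\mathrm{Id}_E\right),\qquad H(0)=H_0,
\end{equation}
on the flat Higgs bundle $(E,\nabla,\theta)$. Writing $h=H_0^{-1}H$, this becomes a nonlinear parabolic equation for the positive self-adjoint endomorphism $h$; using Assumptions 1 and 2 (finite volume and an exhaustion function with $\tr_g\pt\bp\phi$ bounded) together with $\sup_X|\tr_g(F_{H_0}+[\theta,\theta^{*H_0}])|_{H_0}<+\infty$, one establishes short-time existence and then long-time existence of a smooth solution $H(t)$ for all $t\in[0,\infty)$, so that $H(t)$ and $H_0$ are mutually bounded on every finite time interval. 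The key analytic input here is a maximum-principle / Moser-iteration argument (this is where Assumption 3 and the hypothesis $|d\og_g|_g\in L^\infty$ enter) controlling $\sup_X|\log h|$ and the Donaldson-type functional along the flow.

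Next I would introduce the Donaldson functional $M(H_0,H)$ (the affine Higgs analogue of Simpson's), show it is monotone decreasing along the flow, and derive the main energy estimate: along the flow,
\begin{equation}
\frac{d}{dt}\int_X|\tr_g(F_{H(t)}+[\theta,\theta^{*H(t)}])-\lmd\cdot\mathrm{Id}_E|^2_{H(t)}\,\frac{\og_g^n}{\nu}\le 0,
\end{equation}
so the $L^2$-norm of the Hermitian-Einstein tension is uniformly bounded. The heart of the proof is the dichotomy: either (a) $\sup_X|\log h(t)|_{H_0}$ stays uniformly bounded for all $t$, in which case standard parabolic estimates give subsequential convergence $H(t_j)\to H_\infty$ with $H_\infty$ and $H_0$ mutually bounded, $D''(\log H_0^{-1}H_\infty)\in L^2$, and $\tr_g(F_{H_\infty}+[\theta,\theta^{*H_\infty}])=\lmd\cdot\mathrm{Id}_E$; or (b) $\sup_X|\log h(t_j)|\to\infty$ along some sequence $t_j\to\infty$, and then, after rescaling $\log h(t_j)$ by its $L^1$- or $C^0$-norm and passing to a weak limit, one constructs a nonzero weakly holomorphic $L^2_1$ subbundle (Uhlenbeck-Yau regularity, adapted to the flat affine / Higgs context) that is $\theta$-invariant and destabilizes $(E,\nabla,\theta)$, contradicting $H_0$-analytic stability. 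The constant $\lmd=\deg_g(E,H_0)/(\mathrm{rank}(E)\,\mathrm{Vol}(X))$ is forced by integrating the Hermitian-Einstein equation against $\og_g^n/\nu$ and using finite volume.

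I expect the main obstacle to be case (b): constructing the destabilizing $\theta$-invariant sub-Higgs sheaf in the non-compact affine setting. The compactness needed to extract a weak limit of the projections $\pi_j$ (bounded in $L^2_1$) and to prove the limit is a genuine weakly holomorphic projection satisfying $(\mathrm{Id}-\pi)D''\pi=0$ in the $D''=\bp+\theta$ sense requires the Uhlenbeck-Yau argument, and in the affine category this means working with the flat connection $\nabla$, the operators $\pt,\bp$ built from $D$, and the measure $\og_g^n/\nu$, while the hypothesis $|d\og_g|_g\in L^\infty$ is needed to absorb the torsion terms coming from the Gauduchon (non-K\"ahler) condition in the integration-by-parts identities. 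A secondary technical point is establishing that the weak limit, a priori only an $L^2_1$-subbundle, actually underlies a flat sub-Higgs bundle $V$ with a well-defined analytic degree $\deg_g(V,H_0)$, and that the Chern-Weil inequality relating $\deg_g(V,H_0)$ to the limit of the energy forces $\deg_g(V,H_0)/\mathrm{rank}(V)\ge\lmd$, i.e. the destabilization. Once these are in place, the contradiction is immediate and the theorem follows.
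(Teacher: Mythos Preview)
Your overall strategy is right in spirit, but there is one genuine gap that the paper is explicitly built around: you propose to use the Donaldson functional $M(H_0,H)$ to control the flow, but in the affine \emph{Gauduchon} setting (where only $\pt\bp(\og_g^{n-1})=0$, not $d\og_g^{n-1}=0$) the Donaldson functional is \emph{not} well-defined---the usual path-independence argument fails because the boundary terms from integration by parts do not vanish. The paper makes this point explicitly and replaces the Donaldson functional by the single identity
\[
\int_X\tr\bigl(\Phi(H_0,\theta)\,s\bigr)\,\frac{\og_g^n}{\nu}
+\int_X\lag\Psi(s)(D''s),D''s\rag_{H_0}\,\frac{\og_g^n}{\nu}
=\int_X\tr\bigl(\Phi(H,\theta)\,s\bigr)\,\frac{\og_g^n}{\nu},
\]
with $s=\log(H_0^{-1}H)$ and $\Psi$ the divided-difference function. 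Combined with the flow equation, the right-hand side equals $-\tfrac18\tfrac{d}{dt}\|s\|_{L^2}^2$, and this is what drives both the $L^2$ bound on $D''s$ and the contradiction argument in your case~(b); the rescaling is by $\|s\|_{L^2}$, and the crucial inequality used to control the rescaled $u_i=s(t_i)/\|s(t_i)\|_{L^2}$ comes from choosing a subsequence along which $\tfrac{d}{dt}\|s\|_{L^2}\ge0$, not from monotonicity of a Donaldson functional.

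A second, related gap is in your case~(a): monotonicity of $\int_X|\Phi|_{H(t)}^2$ alone does not show that the subsequential limit $H_\infty$ is Hermitian--Einstein (normally one closes this via the Donaldson functional being bounded below, which is unavailable here). The paper instead introduces the auxiliary quantity
\[
I(t)=\int_X\Bigl(|D_{H(t)}\Phi|_{H(t)}^2+2\,|[\Phi,\theta]|_{H(t)}^2\Bigr)\frac{\og_g^n}{\nu},
\]
passes to the gauge-equivalent Yang--Mills--Higgs flow, and proves via a differential inequality on compact exhaustions that $I(t)\to0$. This yields only that $\Phi_\infty$ is $D_{H_\infty}$-parallel and commutes with $\theta$; a \emph{second} application of stability, now to the eigenbundle decomposition of $\Phi_\infty$, is needed to force $\Phi_\infty=\lmd\cdot\mathrm{Id}_E$. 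Both halves of the argument therefore hinge on working around the missing Donaldson functional, and that is exactly the content you are missing.
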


We should remark that Biswas, Loftin and Stemmler (\cite{BLS13}) only proved the Donaldson-Uhlenbeck-Yau type theorem in compact case. They used the continuity method and followed the argument of L\"ubke and Teleman (\cite{LT95}). Here we first introduce the  affine Hermitian-Yang-Mills flow
\begin{equation}
  H^{-1}(t)\frac{\partial H(t)}{\partial t}=-4\left({\rm tr}_g(F_{H(t)}+[\theta,\theta^{*H(t)}])-\lambda\cdot {\rm Id}_E\right),
\end{equation}
on Higgs bundles over affine Gauduchon manifolds and then we study the existence of short-time solutions and long-time solutions. In \cite{Sim92},  Simpson used Donaldson's heat flow method to attack the existence problem of the Hermitian-Einstein metrics on Higgs bundles, and his proof relies on the properties of the Donaldson functional. However, the Donaldson functional is not well defined when $g$ is only affine Gauduchon. So Simpson's argument is not applicable in our situation directly.  In this paper, we study the non-compact affine Gauduchon cases by using the heat flow method and avoid the Donaldson functional. The key is that we use the important identity (\ref{Keyequality}) instead of the Donaldson functional. For simplicity, we set
\begin{equation}
\Phi(H,\theta)={\rm tr}_g(F_H+[\theta,\theta^{*H}])-\lambda \cdot {\rm  Id}_E.
\end{equation}
Under the assumptions as that in Theorem \ref{Maintheorem}, we can prove the following identity:
\begin{equation}\label{Keyequality}
\int_X{\rm tr}\left(\Phi(H_0,\theta)s\right)+\langle\Psi(s)(D''s),D''s\rangle_{H_0}\frac{\omega^n_g}{\nu}=\int_X{\rm tr}\left(\Phi(H,\theta)s\right)\frac{\omega^n_g}{\nu},
\end{equation}
where $s=\log(H_0^{-1}H)$ and
\begin{equation}\label{Psixy}
\Psi(x,y)=\left\{\begin{split}
  &\frac{e^{y-x}-1}{y-x}, \ \ &x\neq y,\\
  &1, \ \  &x=y.
\end{split}\right.
\end{equation}
By the above identity (\ref{Keyequality}) and Loftin's result (\cite{Loftin09}) that $L_1^2 $ weakly flat sub-bundles are smooth flat sub-bundles, using the heat flow method, we can obtain the existence result of affine Hermitian-Einstein metric, see section 5 for details. In \cite{Jacob11}, Jacob used the heat flow technique for Higgs bundle over compact Gauduchon manifold. It should be pointed out that our argument is new even in the compact Hermitian case.

This paper is organized as follows. In section 2, we give some introduction of the basic theory of $(p,q)$-forms with values in a flat vector bundle, affine Hermitian-Einstein metric and flat Higgs bundle. In section 3, we introduce the affine Hermitian-Yang-Mills flow on Higgs bundles over affine Gauduchon manifolds and give some basic estimates of the flow which will be used in section 4. Then we study the existence of short-time solutions and long-time solutions when the base space  is compact. And we also solve the Dirichlet problem for the Hermitian-Einstein equation on a flat Higgs bundle. In section 4, we get the existence of the long-time solutions of the affine Hermitian-Yang-Mills flow when the base manifold is noncompact. In section 5, we  consider the stable case and complete the proof of Theorem \ref{Maintheorem}.

\medskip

{\bf  Acknowledgement:} The authors are partially supported by NSF in China No.11625106, 11571332 and 11721101. The second author is also supported by  NSF in China No.11801535, the China Postdoctoral Science Foundation (No.2018M642515) and the Fundamental Research Funds for the Central Universities.

\medskip

\section{Preliminary}\label{Pre_res}
\subsection{Affine Dolbeault complex}
Let $(M,D)$ be an affine manifold of dimension $n$, meaning that $D$ is a flat torsion-free connection on the tangent bundle $TM$. The affine structure of $M$ determines a natural complex structure of $TM$. In local affine coordinates $\{x^1,\cdots,x^n\}$, the fact that every tangent vector $Y\in TM$ can be written as $Y=y^i\frac{\partial}{\partial x^i}$ induces the local coordinates $\{y^1,\cdots\,y^n\}$ on $TM$. Then $z^i=x^i+\sqrt{-1}y^i$ form the complex coordinates on $TM$, which  is usually denoted by $M^{\mathbb{C}}$. On $M$, there are natural $(p,q)$-forms (see \cite{Shima}), which are the restriction of $(p,q)$-forms from $M^{\mathbb{C}}$. Let $T^*M$ be the cotangent bundle over $M$. We denote by $(
\Lambda^pT^*M)\otimes (\Lambda^qT^*M)$ the tensor product of vector bundles $\Lambda^p(T^*M)$ and $\Lambda^q(T^*M)$, and by $\mathcal{A}^{p,q}$ the space of all smooth sections of $\Lambda^pT^*M\otimes \Lambda^qT^*M$.

If $\{x^i\}_{i=1}^n$ are the local affine coordinates on $M$ with respect to the connection $D$, then we will denote the induced frame on $\mathcal{A}^{p,q}$ as
\begin{equation*}
  \{dz^{i_1}\wedge\cdots\wedge dz^{i_p}\otimes d\bar{z}^{j_1}\wedge\cdots\wedge d\bar{z}^{j_q}\},
\end{equation*}
where $z^i=x^i+\sqrt{-1}y^i$ are the complex coordinates on $M^{\mathbb{C}}$. Note that $dz^i=d\bar{z}^i=dx^i$ on $M$.

There is a natural restriction map from $\Lambda^{p,q}(M^{\mathbb{C}})$ to $\mathcal{A}^{p,q}$ as follows:
for $\psi \in \Lambda^{p,q}({M^{\mathbb{C}}})$, in local affine coordinates on an open subset $U\subset M$, we have
\begin{equation}
\begin{split}
  &\sum \psi_{i_1\cdots i_p,j_1\cdots j_q}(dz^{i_1}\wedge\cdots\wedge  dz^{i_p})\wedge(d\bar{z}^{j_1}\wedge\cdots\wedge d\bar{z}^{j_q})\\
  &\mapsto \sum \psi_{i_1\cdots i_p,j_1\cdots j_q}|_U(dz^{i_1}\wedge\cdots\wedge dz^{i_p})\otimes(d\bar{z}^{j_1}\wedge\cdots\wedge d\bar{z}^{j_q}),
\end{split}
\end{equation}
where $\psi_{i_1\cdots i_p,j_1\cdots j_q}$ are smooth functions on $TU\subset TM=M^{\mathbb{C}}$, $U$ is considered as the zero section of $\pi:TU \rightarrow U$, that is for a function $f$ on $M^{\mathbb{C}}$, $f(x,y)|_U=f(x,0)$. And the sums are taken over all $1\leq i_1<\cdots<i_p\leq n$ and $1\leq j_1<\cdots<j_q\leq n$.

We can also define the natural operators $\partial$ and $\bar{\partial}$ acting on $(p,q)$-forms on $M$
\begin{equation*}
\partial :\mathcal{A}^{p,q}\rightarrow \mathcal{A}^{p+1,q} \quad \text{and}\quad \bar{\partial}:\mathcal{A}^{p,q}\rightarrow \mathcal{A}^{p,q+1}
\end{equation*}
by
\begin{equation}
\partial :=\frac{1}{2}(d^{D}\otimes {\rm Id})\quad\text{and}\quad \bar{\partial}:=(-1)^p\frac{1}{2}({\rm Id}\otimes d^D),
\end{equation}
where $d^D$ is the exterior derivative for the forms of $M$ induced from $D$ and ${\rm Id}$ is the identity operator. In local affine coordinates, for $\phi=\phi_{i_1\cdots i_p,j_1\cdots j_q}(dz^{i_1}\wedge\cdots \wedge dz^{i_p})\otimes (d\bar{z}^{j_1}\wedge\cdots\wedge d\bar{z}^{j_q})\in \mathcal{A}^{p,q}$,
\begin{equation}
\partial \phi:=\frac{1}{2}(d\phi_{i_1\cdots i_p,j_1\cdots j_q})\wedge (dz^{i_1}\wedge\cdots \wedge dz^{i_p})\otimes (d\bar{z}^{j_1}\wedge\cdots\wedge d\bar{z}^{j_q})
\end{equation}
and
\begin{equation}
\bar{\partial}\phi:=(-1)^p\frac{1}{2}(dz^{i_1}\wedge\cdots\wedge dz^{i_p})\otimes (d\phi_{i_1\cdots i_p,j_1\cdots j_q})\wedge(d\bar{z}^{j_1}\wedge\cdots\wedge d\bar{z}^{j_q}).
\end{equation}
These operators are the restrictions of the corresponding $\partial$ and $\bar{\partial}$ operators on $M^{\mathbb{C}}$ with respect to the restriction map above.

There is also a natural wedge product on $\mathcal{A}^{p,q}$, which we take to be the restriction of the wedge product on $M^\mathbb{C}$: if $\phi_i\otimes \psi_i \in \mathcal{A}^{p_i,q_i}$ for $i=1,2$, then we define
\begin{equation}
(\phi_1\otimes\psi_1)\wedge(\phi_2\otimes\psi_2):=(-1)^{q_1p_2}(\phi_1\wedge\phi_2)\otimes (\psi_1\wedge \psi_2)\in \mathcal{A}^{p_1+p_2,q_1+q_2}.
\end{equation}

We can also define the conjugation operator on $(p,q)$-forms on $M$, which is the restriction of the complex conjugation on $M^{\mathbb{C}}$: if $\phi \in \Lambda^p(T^*M)$ and $\psi \in \Lambda^q(T^*M)$ are complex valued forms, then we define
\begin{equation}
\overline{\phi\otimes\psi}:=(-1)^{pq}\bar{\psi}\otimes\bar{\phi}.
\end{equation}

Given a smooth Riemannian metric $g$ on an affine manifold $M$, there is a natural nondegenerate $(1,1)$-form $\omega_g$ on $M$ expressed in local affine coordinate as
\begin{equation}
\omega_g=\sum\limits_{i,j=1}^ng_{ij}dz^i\otimes d\bar{z}^j.
\end{equation}
The Riemannian metric $g$ naturally is extended to a Hermitian metric on $M^{\mathbb{C}}$ and $\omega_g$ is the restriction of the corresponding $(1,1)$-form on $M^{\mathbb{C}}$.
%A Riemannian metric $g$ on $M$ is said to be affine Gauduchon if
%\begin{equation}
%\partial\bar{\partial}(\omega_g^{n-1})=0,
%\end{equation}
%where $\partial$ and $\bar{\partial}$ are operators on $M$ defined above. On a compact special affine manifold $M$, every conformal class of Riemannian metrics contains an affine Gauduchon metric, which is unique up to scaling by a constant (see \cite{Loftin09}).

%The affine manifold $M$ is called special if it admits a volume form $\nu$ which is covariant constant with respect to the flat connection $D$ on $TM$.

In the case of special affine structures, $\nu$ induces the natural maps
\begin{equation}
\mathcal{A}^{n,q}\rightarrow \Lambda^q(T^*M),\quad \nu\otimes \chi \mapsto (-1)^{\frac{n(n-1)}{2}}\chi
\end{equation}
and
\begin{equation}
\mathcal{A}^{p,n}\rightarrow \Lambda^p(T^*M),\quad \chi\otimes \nu \mapsto (-1)^{\frac{n(n-1)}{2}}\chi,
\end{equation}
which are called division by $\nu$. In particular, any $(n,n)$-form $\chi$ can be integrated by considering the integral of $\frac{\chi}{\nu}$(see \cite{Loftin09}).

Let $(E,\nabla)$ be a flat complex vector bundle of rank $r$ over an affine manifold $M$. As we can see, the pull back of $E$ to $TM=M^{\mathbb{C}}$ by the natural projection $\pi:TM\rightarrow M$ will be denoted by $E^{\mathbb{C}}$. The flat connection $\nabla$ is pulled back to a flat connection $\pi^*\nabla$ on $E^{\mathbb{C}}$. This flat vector bundle $(E^{\mathbb{C}},\pi^*\nabla)$ over $M^{\mathbb{C}}$  can be considered as extension of the flat vector bundle $(E,\nabla)$ on the zero section of $TM$.

Let $\{s_1,\cdots, s_r\}$ be the locally constant frames on $E$ with respect to the flat connection $\nabla$, meaning that $\nabla(s_{\alpha})=0$, for $\alpha=1,2,\cdots ,r$. Any locally constant frame $\{s_1,\cdots,s_r\}$ of $E$ over $M$ can be extended to a locally constant frame of $E^{\mathbb{C}}$ over $M^{\mathbb{C}}$, and they are also holomorphic frames on $E^{\mathbb{C}}\rightarrow M^{\mathbb{C}}$ with respect to the holomorphic structure of $E^{\mathbb{C}}$ induced by $\nabla$.

Let $H$ be a Hermitian metric on $E$. It defines a Hermitian metric on $E^{\mathbb{C}}$ and let $D_H$ be the Chern connection associated to this Hermitian metric on $E^{\mathbb{C}}$. Then according to the decomposition of $(1,0)$-part and $(0,1)$-part, $D_H$ corresponds to a pair
\begin{equation}
(\partial_H,\bar{\partial})=(\partial_{H,\nabla},\bar{\partial}_{\nabla}),
\end{equation}
where
\begin{equation*}
\partial_{H,\nabla}:\Gamma(E)\rightarrow \mathcal{A}^{1,0}(E)\quad \text{and}\quad \bar{\partial}_{\nabla}: \Gamma(E)\rightarrow \mathcal{A}^{0,1}(E)
\end{equation*}
are smooth differential operators defined as that in \cite{Loftin09}. The pair $(\partial_H,\bar{\partial})$ is called the extended Hermitian connection of $(E,H)$.

For locally constant frame $\{s_1,\cdots,s_r\}$, if we write $H_{\alpha\bar{\beta}}=H(s_{\alpha},s_{\beta})$, then we can locally define the extended connection form
\begin{equation}
A_H=H^{-1}\partial H\in \mathcal{A}^{1,0}({\rm End} E),
\end{equation}
the extended curvature form
\begin{equation}
F_H=\bar{\partial}A_H=\bar{\partial}(H^{-1}\partial H)\in \mathcal{A}^{1,1}({\rm End} E),
\end{equation}
the extended mean curvature
\begin{equation}
K_H={\rm tr}_gF_H\in C^{\infty}(M,{\rm End} E),
\end{equation}
and the extended first Chern form
\begin{equation}
c_1(E,H)={\rm tr} F_H \in \mathcal{A}^{1,1},
\end{equation}
which are the restrictions of the corresponding objects on $E^{\mathbb{C}}$. Here ${\rm tr}_g$ denotes the contraction of differential forms using Riemannian metric $g$, and ${\rm tr}$ denotes the trace map on the fibers of ${\rm End} E$. Note that the extended first Chern form is given locally by
\begin{equation}
c_1(E,H)=-\partial\bar{\partial}(\log\det(H_{\alpha\bar{\beta}})).
\end{equation}

%\begin{defn}
%  A Hermitian metric $H$ on $E$ is called an affine Hermitian-Einstein metric with respect to $g$ if the extended mean curvature $K_H$ is of the form
%  \begin{equation}
%  K_H=\lambda\cdot {\rm Id}_{E},
%  \end{equation}
%  for some constant $\lambda$.
%\end{defn}

The degree of $(E,\nabla)$ with respect to an affine Gauduchon metric $g$ on $M$ is defined to be
\begin{equation}
{\rm deg}_g(E):=\int_M\frac{c_1(E,H)\wedge \omega_g^{n-1}}{\nu},
\end{equation}
where $H$ is any Hermitian metric on $E$. Since $c_1(E,H)=c_1(E,K)+\partial\bar{\partial}(\log\frac{\det(K_{\alpha\bar{\beta}})}{\det(H_{\alpha\bar{\beta}})}) $, this is well defined by \cite{Loftin09} when $M$ is compact.

%As usual, if ${\rm rank}(E)>0$, the slope of $E$ with respect to $g$ is defined to be
%\begin{equation}
%\mu_g(E):=\frac{{\rm deg}_g(E)}{{\rm rank} (E)}.
%\end{equation}
%
%\begin{defn}
%  The flat vector bundle $(E,\nabla)$ is called stable (respectively, semi-stable) with respect to $g$ if  for every proper non-zero flat sub-bundle $F$ of $E$, we have
%  \begin{equation}
%  \mu_g(F)<\mu_g(E)\quad (\text{respectively}\quad \mu_g(F)\leq \mu_g(E)).
%  \end{equation}
%\end{defn}

\vskip 1 true cm

\subsection{Flat Higgs vector bundles}

\begin{defn}
  Let $(E,\nabla)$ be a smooth vector bundle $E$ equipped with a flat connection $\nabla$ over an affine manifold $M$. A flat Higgs field $\theta$ on $(E,\nabla)$ is defined to be a smooth section of $T^*M\otimes {\rm End}(E)$ such that
  \begin{enumerate}
    \item[(1)] $\theta$ is covariant constant, that is $d^{\nabla}(\theta)=0$, where
    \begin{equation*}
    d^{\nabla}:\Gamma(T^*M\otimes {\rm End} E)\longrightarrow \Gamma\big((\wedge ^2T^*M)\otimes {\rm End} E\big)
    \end{equation*}
    is the induced covariant derivation.
    \item[(2)]$\theta\wedge \theta=0$.
  \end{enumerate}
  If $\theta$ is a flat Higgs field on $(E,\nabla)$, then $(E,\nabla,\theta)$ is called a flat Higgs bundle.
\end{defn}

A Higgs field $\theta$ will always be understood as an element of $\mathcal{A}^{1,0}({\rm End}E)$, meaning it is expressed in local affine coordinates as
\begin{equation*}
\theta=\sum\limits_{i=1}^{n}\theta_i\otimes dz^i,
\end{equation*}
where $\theta_i$ are locally defined flat sections of ${\rm End}(E)$ and note that $dz^i=dx^i$ on $M$.

Given a Hermitian metric $H$  on $(E,\theta)$, the adjoint $\theta^{*H}$ of $\theta$ with respect to $H$ will be regarded as an element of $\mathcal{A}^{0,1}({\rm End} E)$. In locally affine coordinates, this means that
\begin{equation*}
\theta^{*H}=\sum\limits_{j=1}^n(\theta_j)^{*H}\otimes d{\bar{z}}^j.
\end{equation*}

In particular, the Lie bracket $[\theta,\theta^{*H}] \in \mathcal{A}^{1,1}({\rm End}E)$ is locally written as
\begin{equation*}
[\theta,\theta^{*H}]=\theta\wedge \theta^{*H}+\theta^{*H}\wedge \theta=\sum\limits_{i,j=1}^n\left(\theta_i\circ(\theta_j)^{*H}-(\theta_j)^{*H}\circ\theta_i\right)\otimes dz^i\otimes d\bar{z}^j.
\end{equation*}

The extended connection form $A_{H,\theta}$ of the  flat Higgs bundle $(E,\nabla,\theta,H)$ is defined by
\begin{equation}
A_{H,\theta}:=(A_H+\theta,\theta^{*H})\in \mathcal{A}^{1,0}({\rm End}E)\oplus\mathcal{A}^{0,1}({\rm End}E).
\end{equation}
This extended connection form corresponds to the connection form of the Hitchin-Simpson connection $D_{H,\theta}=D_{H}+\theta+\theta^{*H}$ on $E^{\mathbb{C}}\rightarrow M^{\mathbb{C}}$. Analogously, the extended curvature form $F_{H,\theta}$ of $(E,\nabla,\theta,H)$ is defined to be
\begin{equation}
\begin{split}
F_{H,\theta}&:=(\partial_H\theta,\bar{\partial}A_H+[\theta,\theta^{*H}],\bar{\partial}(\theta^{*H}))\\
            &\in \mathcal{A}^{2,0}({\rm End}E)\oplus \mathcal{A}^{1,1}({\rm End} E)\oplus \mathcal{A}^{0,2}({\rm End}E).
\end{split}
\end{equation}
It corresponds to the curvature form $F_{H,\theta}=F_H+[\theta,\theta^{*H}]+\partial_{H}\theta+\bar{\partial}\theta^{*H}$ of the connection $D_{H,\theta}$ on $E^{\mathbb{C}}$. As in usual case, the extended mean curvature $K_{H,\theta}$ of $(E,\nabla,\theta,H)$ is obtained by contracting the $(1,1)$-part of the extended curvature $F_{H,\theta}$ using the Riemannian metric $g$, i.e.
\begin{equation}
K_{H,\theta}:={\rm tr}_g\left(\bar{\partial}A_H+[\theta,\theta^{*H}]\right)\in \mathcal{A}^{0,0}({\rm End}E).
\end{equation}
Since ${\rm tr}[\theta,\theta^{*H}]=0$, we have ${\rm tr}K_{H,\theta}={\rm tr}K_H$. So the extended mean curvature $K_{H,\theta}$ of $(E,\nabla,\theta,H)$ is also related to the first Chern form $c_1(E,H)$ by
\begin{equation}
\left({\rm tr}K_{H,\theta}\right)\omega_g^n=nc_1(E,H)\wedge \omega_g^{n-1}.
\end{equation}

%\begin{defn}
%  An affine Hermitian-Einstein metric on a flat Higgs bundle $(E,\nabla,\theta)$ is a Hermitian metric $H$  such that the extended mean curvature $K_{H,\theta}$ satisfies the equation
%  \begin{equation}
%   K_{H,\theta}=\lambda\cdot {\rm Id}_E,
%  \end{equation}
%  for some constant scalar $\lambda$, which is called the Einstein factor.
%\end{defn}
%
%Let $(E,\theta)$ be a flat Higgs bundle on a compact special affine manifold $(M,D,g,\nu)$ equipped with an affine Gauduchon metric $g$.
%\begin{enumerate}
%  \item[(1)]$(E,\theta)$ is called {\it stable} (respectively,{\it semistable}) if for every flat Higgs sub-bundle $F$ of $E$, with $0<{\rm rank}(F)<{\rm rank} (E)$ which is preserved by $\theta$, that is $\theta(F)\subset T^*M\otimes F$, we have
%      \begin{equation*}
%      \mu_g(F)<\mu_g(E)\qquad\left(\text{respectively},\mu_g(F)\leq\mu_g(E)\right).
%      \end{equation*}
%
% \item[(2)]$(E,\theta)$ is called {\it polystable} if
% \begin{equation}
%(E,\theta)=\bigoplus_{i=1}^N(E_i,\theta|_{E_i}),
%\end{equation}
% where all the $(E_i,\theta|_{E_i})$ are the stable flat Higgs bundles of the same slope $\mu_g(E_i)=\mu_g(E)$.
%\end{enumerate}

\section{The affine Hermitian-Yang-Mills flow on Higgs bundles over compact affine Gauduchon manifolds}

\subsection{The affine Hermitian-Yang-Mills flow}
Let $(M,D,g)$ be an $n$-dimensional compact affine  manifold equipped with an affine Gauduchon metric $g$, $(E,\nabla,\theta)$ be a rank $r$ flat Higgs bundle over $M$ with a background metric $H_0$. Consider the evolution equation
\begin{equation}\label{AHYMF}
  H^{-1}(t)\frac{\partial H(t)}{\partial t}=-4({\rm tr}_g (F_{H(t)}+[\theta,\theta^{*H(t)}])-\lambda \cdot {\rm Id}_E).
\end{equation}
We call it the affine Hermitian-Yang-Mills flow. Choosing a local affine coordinates system $\{x^i\}_{i=1}^n$ on $M$, we define the affine Laplace operator for a function $f$ as
\begin{equation}
\tilde{\Delta}f=4{\rm tr}_g\partial\bar{\partial}f=g^{ij}\frac{\partial ^2f}{\partial x^i\partial x^j},
\end{equation}
where $(g^{ij})$ is the inverse matrix of the matrix $(g_{ij})$. As usual, we denote the Beltrami Laplacian operator by
\begin{equation}
\Delta f=g^{ij}\frac{\partial ^2 f}{\partial x^i\partial x^j}+\left[\frac{\partial g^{ij}}{\partial x^i}+g^{kj}\Gamma^i_{ik}\right]\frac{\partial f}{\partial x^j}.
\end{equation}
The difference between the two Laplacians is given by a first order differential operator as follows
\begin{equation}
(\tilde{\Delta}-\Delta)f=\langle V,\nabla f\rangle_g,
\end{equation}
where $V$ is a well-defined vector field on $M$, which is locally expressed as $V=V_l\frac{\partial}{\partial x^l}=(\frac{\partial g^{il}}{\partial x^i}+g^{kl}\Gamma^i_{ik})\frac{\partial}{\partial x^l}$.

Take a locally constant frame $s_{\alpha}(1\leq\alpha\leq r)$ on $E$, and express $H(t), F_{H(t)}, \theta, \theta^{*H(t)}$ locally. When there is no confusion, we will omit parameter $t$ and simply write $H$, $F_H$, $h$ for $H(t)$, $F_{H(t)}$, $h(t)$, respectively. Then the affine Hermitian-Yang-Mills flow (\ref{AHYMF}) can be rewritten as follows:
\begin{equation}\label{NAHYMF}
\begin{split}
  \frac{\partial H(t)}{\partial t}&=-4{\rm tr}_g\bar{\partial}\partial H+4{\rm tr}_g\bar{\partial}H H^{-1}\partial H +4(\lambda H-{\rm tr}_g[\theta,H^{-1}\bar{\theta}^TH])\\
  &=\tilde{\Delta}H+4{\rm tr}_g\bar{\partial}HH^{-1}\partial H+4(\lambda H-{\rm tr}_g[\theta,H^{-1}\bar{\theta}^TH]).
\end{split}
\end{equation}
From (\ref{NAHYMF}), one can see that the flow is a non-linear strictly parabolic equation.

\begin{prop}\label{PROP3_1}
  Let $H(t)$ be a solution of the affine Hermitian-Yang-Mills flow (\ref{AHYMF}), then
  \begin{equation}\label{HKOTMC}
  (\frac{\partial}{\partial t}-\tilde{\Delta})({\rm tr}({\rm tr}_g(F_{H(t)}+[\theta,\theta^{*H(t)}])-\lambda\cdot {\rm Id}_E))=0
  \end{equation}
  and
  \begin{equation}\label{HKOMC}
 (\frac{\partial}{\partial t}-\tilde{\Delta})|{\rm tr}_g(F_{H(t)}+[\theta,\theta^{*{H(t)}}])-\lambda\cdot {\rm Id}_E|_{H(t)}^2\leq 0.
  \end{equation}
\end{prop}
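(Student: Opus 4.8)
The plan is to verify both evolution relations by a direct computation, exploiting the fact that along the flow the mean curvature operator satisfies a heat-type equation. First I would set $\Phi(t)=\mathrm{tr}_g(F_{H(t)}+[\theta,\theta^{*H(t)}])-\lambda\cdot\mathrm{Id}_E$, so that the flow equation (\ref{AHYMF}) reads $H^{-1}\frac{\partial H}{\partial t}=-4\Phi$. Writing $h(t)=H^{-1}\frac{\partial H}{\partial t}\in\mathcal{A}^{0,0}(\mathrm{End}\,E)$, the standard variation formula for the Chern curvature gives $\frac{\partial}{\partial t}F_H=\bar\partial(\partial_H h)=\bar\partial\partial_H h$, where $\partial_H$ is the $(1,0)$-part of the Chern connection. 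Since $\theta$ is flat (covariant constant) and independent of $t$, differentiating $[\theta,\theta^{*H}]$ in $t$ produces terms involving $\frac{\partial}{\partial t}\theta^{*H}=-[h^{*H},\theta^{*H}]$-type expressions (more precisely $\frac{\partial}{\partial t}\theta^{*H}=[\theta^{*H},h]$ in the appropriate convention, using that $\theta^{*H}$ transforms by conjugation under a change of metric). Collecting everything and contracting with $g$, one obtains $\frac{\partial}{\partial t}\Phi=\mathrm{tr}_g\bar\partial\partial_H h+(\text{zeroth-order terms in }h,\theta)$, and the zeroth-order terms combine into the bundle Laplacian $\tilde\Delta$ applied to $\Phi$ plus a curvature-type commutator; the key is that because $h=-4\Phi$ is itself the mean curvature expression, the right-hand side organizes into $\tilde\Delta\Phi+[\text{something},\Phi]$.

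Next I would take the fiberwise trace of $\frac{\partial}{\partial t}\Phi=\tilde\Delta\Phi+[\,\cdot\,,\Phi]$. Since $\mathrm{tr}[\cdot,\cdot]=0$ and $\mathrm{tr}$ commutes with $\tilde\Delta$ (a second-order scalar operator acting entrywise), this immediately yields $(\frac{\partial}{\partial t}-\tilde\Delta)(\mathrm{tr}\,\Phi)=0$, which is (\ref{HKOTMC}). For (\ref{HKOMC}) I would compute $(\frac{\partial}{\partial t}-\tilde\Delta)|\Phi|^2_{H(t)}$, where $|\Phi|^2_H=\mathrm{tr}(\Phi\Phi^{*H})$. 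One must be careful that the pointwise norm itself depends on $t$ through $H$, and that $\tilde\Delta$ is not a Levi-Civita-type operator but the affine Laplacian $4\,\mathrm{tr}_g\partial\bar\partial$; nonetheless the Bochner-type identity goes through: differentiating and using the evolution equation for $\Phi$ together with the compatibility of $\partial_H,\bar\partial$ with $H$, one gets
\begin{equation*}
(\frac{\partial}{\partial t}-\tilde\Delta)|\Phi|^2_{H}=-2|\partial_H\Phi|^2_H-2|\bar\partial\Phi|^2_H-(\text{nonnegative }\theta\text{-commutator terms})\leq 0.
\end{equation*}
The sign of the gradient terms is forced because $\Phi$ is Hermitian-symmetric with respect to $H$, so $\Phi^{*H}=\Phi$ and the cross terms from $\frac{\partial}{\partial t}$ hitting the metric cancel against part of the Laplacian, leaving only manifestly nonpositive contributions. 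The term $|\bar\partial\Phi|^2$ appears from $\tilde\Delta|\Phi|^2 = |\bar\partial\Phi|^2 + |\partial_H\Phi|^2 + \langle \tilde\Delta\Phi,\Phi\rangle + \langle\Phi,\tilde\Delta\Phi\rangle$ up to the affine-Gauduchon correction, and the $\theta$-terms are nonnegative because they are squared norms of brackets $[\theta,\Phi]$.

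The main obstacle I anticipate is bookkeeping: tracking the precise commutator and $\theta^{*H}$-variation terms in the affine Dolbeault formalism, and confirming that the affine Gauduchon condition $\partial\bar\partial(\omega_g^{n-1})=0$ (rather than $d\omega_g=0$) does not introduce extra first-order terms that spoil the sign in (\ref{HKOMC}). Concretely, when commuting $\mathrm{tr}_g$ past $\partial\bar\partial$ one picks up terms proportional to $\partial\bar\partial(\omega_g^{n-1})/\omega_g^n$-type quantities, and one must check these vanish or are absorbed — this is exactly where the Gauduchon hypothesis enters, paralleling the classical Hermitian case where Gauduchon's condition is what makes the mean-curvature heat equation well-behaved. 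Once the evolution identity $\frac{\partial}{\partial t}\Phi = \tilde\Delta\Phi + (\text{sign-definite lower order})$ is established cleanly, both (\ref{HKOTMC}) and (\ref{HKOMC}) follow by taking trace and by the Bochner computation respectively, with the maximum-principle-friendly inequality in (\ref{HKOMC}) coming for free from the nonpositivity of the error terms.
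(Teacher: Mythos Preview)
Your approach is correct and matches the paper's: compute $\frac{\partial}{\partial t}\Phi$ directly from the flow via the variation formulas for $F_H$ and $\theta^{*H}$, take the fiberwise trace for (\ref{HKOTMC}), and combine with the expansion of $\tilde\Delta|\Phi|_H^2$ to obtain the Bochner-type inequality (\ref{HKOMC}), with the $\theta$-commutator and gradient terms landing with the right sign. One clarification worth making: your anticipated obstacle concerning the Gauduchon condition is unfounded here---the computation in this proposition is entirely pointwise, with no integration by parts, so $\partial\bar\partial(\omega_g^{n-1})=0$ is never invoked and no torsion-type first-order terms appear; the paper's proof indeed makes no use of the Gauduchon hypothesis at this stage (it enters later, in defining the degree and in the key identity (\ref{KI})).
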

\begin{proof}
  Set $h(t)=H_0^{-1}H(t)$. For simplicity, we denote $\Phi(t)=\Phi(H(t),\theta)={\rm tr}_g(F_{H(t)}+[\theta,\theta^{*{H(t)}}])-\lambda \cdot {\rm Id}_E$ and omit $t$ in the calculations.

  Using the identities
  \begin{equation}
    \partial_H-\partial_{H_0}=h^{-1}\partial_{H_0}h,
  \end{equation}
   \begin{equation}
    F_H-F_{H_0}=\bar{\partial}_E(h^{-1}\partial_{H_0}h),
    \end{equation}
    \begin{equation}
    \theta^{*H}=h^{-1}\theta^{*H_{0}}h,
 \end{equation}
  we have
  \begin{equation}\label{PPTOF}
\begin{split}
    \frac{\partial}{\partial t}\Phi(H,\theta)&=\frac{\partial}{\partial t}({\rm tr}_g(F_H+[\theta,\theta^{*H}])-\lambda\cdot {\rm Id}_E)\\
    &={\rm tr}_g(\bar{\partial}_E(\partial_H(h^{-1}\frac{\partial h}{\partial t}))+[\theta,[\theta^{*H},h^{-1}\frac{\partial h}{\partial t}]])
    \end{split}
\end{equation}
  and
  \begin{equation}\label{LPLOFS}
  \begin{split}
    \tilde{\Delta}|\Phi(H,\theta)|_H^2=&-4{\tr}_g\bar{\partial}\partial{\rm tr}(\Phi H^{-1}\bar{\Phi}^TH)\\
    =&-4{\rm tr}_g\bar{\partial}{\rm tr}(\partial \Phi H^{-1}\bar{\Phi}^TH-\Phi H^{-1}\partial H H^{-1}\bar{\Phi}^TH\\
    &+\Phi H^{-1}\overline{\bar{\partial}\Phi}^TH+\Phi H^{-1}\bar{\Phi}^TH H^{-1}\partial H)\\
    =&2{\rm Re}\langle-4{\rm tr}_g\bar{\partial}_E\partial_H\Phi,\Phi\rangle_H+4|\partial_H\Phi|_H^2+4|\bar{\partial}_E\Phi|_H^2.
  \end{split}
\end{equation}
According to (\ref{PPTOF}), one can easily check that
\begin{equation}\label{PPTOTF}
\frac{\partial}{\partial t}{\rm tr}\Phi={\rm tr}\left(\frac{\partial \Phi}{\partial t}\right)={\rm tr}_g\left({\rm tr}\bar{\partial}\partial (h^{-1}\frac{\partial h}{\partial t}) \right)=\tilde{\Delta}{\rm tr}\Phi.
\end{equation}
This implies (\ref{HKOTMC}).

A simple computation gives us that
\begin{equation}\label{PPTOFS}
\begin{split}
  \frac{\partial}{\partial t}|\Phi(H,\theta)|_H^2&=\frac{\partial }{\partial t}{\rm tr}(\Phi H^{-1}\bar{\Phi}^TH)\\
  &=2{\rm Re}\langle-4{\rm tr}_g\bar{\partial}_E\partial_H\Phi,\Phi\rangle_H-8\langle{\rm tr}_g[\theta,[\theta^{*H},\Phi]],\Phi \rangle_H.
\end{split}
\end{equation}
From (\ref{LPLOFS}) and (\ref{PPTOFS}), we can conclude that
\begin{equation}
\begin{split}
  (\frac{\partial }{\partial t}-\tilde{\Delta})|\Phi(H,\theta)|_H^2&=-8\langle{\rm tr}_g[\theta,[\theta^{*H},\Phi]],\Phi\rangle_H-4|\partial_H\Phi|_H^2-4|\bar{\partial}_E\Phi|_H^2\\
  &\leq 0.
\end{split}
\end{equation}

\end{proof}

Next, we will recall the Donaldson's distance on the space of the Hermitian metrics.
\begin{defn}
  For any two Hermitian metrics $H$ and $K$ on the complex vector bundle $E$, we define
  \begin{equation}
  \sigma(H,K)={\rm tr}(H^{-1}K)+{\rm tr}(K^{-1}H)-2{\rm rank} E.
  \end{equation}
\end{defn}

It is obvious that $\sigma(H,K)\geq 0$ with the equality if and only if $H=K$. A sequence of Hermitian metrics $H_i$ converge to $H$ in the $C^0$-topology if and only if $\sup\limits_{M}\sigma(H_i,H)\rightarrow 0$ as $i\rightarrow \infty$.

Denote $h=K^{-1}H$, then we know
\begin{equation}\label{THMHMMK}
{\rm tr}\left(h({\rm tr}_g F_H-{\rm tr}_g F_K)\right)=-\frac{1}{4}\tilde{\Delta}{\rm tr} h+{\rm tr}(-{\rm tr}_g\bar{\partial}h h^{-1}\partial_Kh).
\end{equation}

Similarly,
\begin{equation}\label{THIMKMMH}
{\rm tr}\left(h^{-1}({\rm tr}_gF_K-{\rm tr}_gF_H)\right)=-\frac{1}{4}\tilde{\Delta}{\rm tr} h^{-1}+{\rm tr}(-{\rm tr}_g\bar{\partial}h^{-1}h\partial_Hh^{-1}).
\end{equation}

%Since $h$ is a positive Hermitian endomorphism, we can follow Donaldson's arguments in \cite{Do85} to check that ${\rm tr}(-{\rm tr}_g\bar{\partial}h h^{-1}\partial_Kh)\geq 0$.
So we also have the following proposition.

\begin{prop}\label{Prop3_4}
  Let $H$ and $K$ be two affine Hermitian-Einstein metrics, then $\sigma(H,K)$ is subharmonic with respect to the affine Laplace operator, i.e.
  \begin{equation}\label{TLPLDD}
  \tilde{\Delta}\sigma(H,K)\geq 0.
  \end{equation}
\end{prop}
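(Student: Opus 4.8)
The plan is to reduce the claim to the two identities \eqref{THMHMMK} and \eqref{THIMKMMH} together with the Hermitian-Einstein equations for $H$ and $K$. First I would write $h = K^{-1}H$ and observe that, since $H$ and $K$ are both affine Hermitian-Einstein (possibly with different Einstein factors $\lambda_H$ and $\lambda_K$; in fact a trace argument using \eqref{PPTOTF}-type reasoning forces $\lambda_H=\lambda_K=:\lambda$, because $\int_M {\rm tr}\,{\rm tr}_g(F_H+[\theta,\theta^{*H}])\,\nu^{-1}\omega_g^n$ is a topological quantity), one has
\begin{equation*}
{\rm tr}_g\left(F_H+[\theta,\theta^{*H}]\right)=\lambda\cdot{\rm Id}_E,\qquad {\rm tr}_g\left(F_K+[\theta,\theta^{*K}]\right)=\lambda\cdot{\rm Id}_E.
\end{equation*}
The key point is that one needs identities \eqref{THMHMMK} and \eqref{THIMKMMH} upgraded to include the Higgs term; that is, replacing ${\rm tr}_gF_H$ by ${\rm tr}_g(F_H+[\theta,\theta^{*H}])$ throughout. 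Using $\theta^{*H}=h^{-1}\theta^{*K}h$ and ${\rm tr}[\theta,\cdot]=0$, the bracket contributions can be tracked explicitly, and the upshot is that ${\rm tr}\big(h\,({\rm tr}_g(F_H+[\theta,\theta^{*H}])-{\rm tr}_g(F_K+[\theta,\theta^{*K}]))\big)$ equals $-\tfrac14\tilde\Delta\,{\rm tr}\,h$ plus a manifestly nonpositive term of the form $-{\rm tr}\big({\rm tr}_g(D''_K h)\,h^{-1}(D''_K h)^{*K}\big)$ where $D''_K=\bar\partial+\theta$; and symmetrically for $h^{-1}$.

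Next I would subtract the two Einstein equations: since both right-hand sides equal $\lambda\cdot{\rm Id}_E$, their difference vanishes, so the left-hand sides of the (Higgs-corrected) identities \eqref{THMHMMK} and \eqref{THIMKMMH} vanish. Concretely,
\begin{equation*}
0={\rm tr}\big(h(\lambda\cdot{\rm Id}_E-\lambda\cdot{\rm Id}_E)\big)=-\tfrac14\tilde\Delta\,{\rm tr}\,h-\big|\text{(gradient-type term)}\big|^2,
\end{equation*}
which yields $\tilde\Delta\,{\rm tr}\,h\ge 0$, i.e. ${\rm tr}(K^{-1}H)$ is $\tilde\Delta$-subharmonic. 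Applying the same reasoning with the roles of $H$ and $K$ exchanged (equivalently, using \eqref{THIMKMMH}) gives $\tilde\Delta\,{\rm tr}(H^{-1}K)\ge 0$. Adding these and using the definition $\sigma(H,K)={\rm tr}(H^{-1}K)+{\rm tr}(K^{-1}H)-2\,{\rm rank}\,E$ (the constant term is killed by $\tilde\Delta$) gives $\tilde\Delta\,\sigma(H,K)\ge 0$.

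The main obstacle I anticipate is verifying carefully that the Higgs-field contributions to the analogues of \eqref{THMHMMK} and \eqref{THIMKMMH} assemble into the claimed nonpositive gradient term rather than leaving an uncontrolled remainder. This requires expanding ${\rm tr}_g[\theta,\theta^{*H}]-{\rm tr}_g[\theta,\theta^{*K}]$ via $\theta^{*H}=h^{-1}\theta^{*K}h$ and recognizing the combination $\theta\circ h^{-1}\theta^{*K}h - \ldots$ as part of $|D''_K h|^2$-type expression after pairing against $h$ (resp. $h^{-1}$); this is exactly the computation underlying the key identity \eqref{Keyequality} with $H_0$ and $H$ replaced by $K$ and $H$, specialized to the case where both are Einstein. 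One should also confirm $\lambda_H=\lambda_K$; this is where the affine Gauduchon condition $\partial\bar\partial(\omega_g^{n-1})=0$ and Chern-Weil are used, exactly as in the definition of ${\rm deg}_g$. Once these two points are in place, the rest is the short linear-algebra manipulation above, and no analysis beyond the maximum-principle-ready inequality $\tilde\Delta\sigma\ge0$ is needed for this particular proposition.
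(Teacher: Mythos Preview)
Your approach coincides with the paper's: Proposition~\ref{Prop3_4} is stated immediately after identities \eqref{THMHMMK}--\eqref{THIMKMMH} and is meant to follow from them together with the Higgs-term computations carried out in the proof of Proposition~\ref{HKOTAHYMM} just below, namely ${\rm tr}\big(h\,{\rm tr}_g[\theta,\theta^{*H}-\theta^{*K}]\big)=|\theta h^{1/2}-h\theta h^{-1/2}|_K^2\ge 0$ and the symmetric formula for $h^{-1}$. There is, however, a sign slip in your displayed line: the remainder in \eqref{THMHMMK} (and its Higgs correction) is \emph{nonnegative}, not nonpositive --- in the paper's notation ${\rm tr}(-{\rm tr}_g\bar\partial h\,h^{-1}\partial_K h)\ge 0$ --- so the correct chain is
\[
0=-\tfrac14\tilde\Delta\,{\rm tr}\,h+(\text{nonnegative}),\qquad\text{hence}\quad \tilde\Delta\,{\rm tr}\,h\ge 0,
\]
whereas your written equation $0=-\tfrac14\tilde\Delta\,{\rm tr}\,h-|\cdot|^2$ would force the opposite inequality. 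Your discussion of $\lambda_H=\lambda_K$ is correct but goes beyond what the paper makes explicit; the paper tacitly takes the Einstein factors to agree, which is automatic in the closed Gauduchon setting by the degree argument you sketch and is given as part of the data in the Dirichlet application where this proposition is actually invoked.
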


Let $H(t)$ and $K(t)$ be two solutions of the affine Hermitian-Yang-Mills flow (\ref{AHYMF}), and denote $h(t)=K^{-1}(t)H(t)$, then using  (\ref{THMHMMK}) and (\ref{THIMKMMH}), we obtain
\begin{equation}\label{TLPLTH}
\tilde{\Delta} {\rm tr} h(t)=4({\rm tr}(-{\rm tr}_g\bar{\partial}h h^{-1}\partial_Kh)-{\rm tr}(h({\rm tr}_gF_H-{\rm tr}_gF_K)))
\end{equation}
and
\begin{equation}
\tilde{\Delta} {\rm tr} h^{-1}(t)=4({\rm tr}(-{\rm tr}_g\bar{\partial}h^{-1}h\partial_Hh^{-1})-{\rm tr}(h^{-1}({\rm tr}_gF_K-{\rm tr}_gF_H))).
\end{equation}
By direct calculations, one can get
\begin{equation}
\begin{split}
  \frac{\partial}{\partial t}{\rm tr} h(t)&={\rm tr}(\frac{\partial h}{\partial t})={\rm tr}(\frac{\partial}{\partial t}(K^{-1}H))\\
  &={\rm tr}(-K^{-1}\frac{\partial K}{\partial t}K^{-1}H+K^{-1}\frac{\partial H}{\partial t})\\
  &=-4{\rm tr}(h({\rm tr}_gF_H-{\rm tr}_gF_K))-4{\rm tr}(h({\rm tr}_g[\theta,\theta^{*H}-\theta^{*K}])).
\end{split}
\end{equation}
Similarly,
\begin{equation}\label{PPTTHI}
\frac{\partial}{\partial t}{\rm tr} h^{-1}=-4{\rm tr}\left(h^{-1}({\rm tr}_gF_K-{\rm tr}_gF_H)\right)-4{\rm tr}(h^{-1}({\rm tr}_g[\theta,\theta^{*K}-\theta^{*H}])).
\end{equation}
From (\ref{TLPLTH})-(\ref{PPTTHI}), it follows that
\begin{equation}
\begin{split}
  &(\tilde{\Delta}-\frac{\partial}{\partial t})({\rm tr} h(t)+{\rm tr} h^{-1}(t))\\
  =&4{\rm tr}(-{\rm tr}_g\bar{\partial}hh^{-1}\partial_Kh)+4{\rm tr}(-{\rm tr}_g\bar{\partial}h^{-1}h\partial_Hh^{-1})\\
  &+4{\rm tr}(h({\rm tr}_g[\theta,\theta^{*H}-\theta^{*K}]))+4{\rm tr}(h^{-1}({\rm tr}_g[\theta,\theta^{*K}-\theta^{*H}]))\\
  \geq &0,
\end{split}
\end{equation}
where we have used
\begin{equation}
{\rm tr}(h({\rm tr}_g[\theta,\theta^{*H}-\theta^{*K}]))=|\theta h^{\frac{1}{2}}-h\theta h^{-\frac{1}{2}}|_K^2
\end{equation}
and
\begin{equation}
{\rm tr}(h^{-1}({\rm tr}_g[\theta,\theta^{*K}-\theta^{*H}]))=|h^{-\frac{1}{2}}\theta-h^{\frac{1}{2}}\theta h^{-1}|_K^2.
\end{equation}
So we have proved the following proposition.
\begin{prop}\label{HKOTAHYMM}
  Let $H(t)$ and $K(t)$ be two solutions of affine Hermitian-Yang-Mills flow (\ref{AHYMF}), then
  \begin{equation}
  (\tilde{\Delta}-\frac{\partial}{\partial t})\sigma(H(t),K(t))\geq 0.
  \end{equation}
\end{prop}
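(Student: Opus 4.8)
The statement to be proven is Proposition \ref{HKOTAHYMM}: if $H(t)$ and $K(t)$ are two solutions of the affine Hermitian-Yang-Mills flow, then $(\tilde{\Delta}-\frac{\partial}{\partial t})\sigma(H(t),K(t))\geq 0$. The plan is to exploit the symmetric structure of Donaldson's distance $\sigma(H,K)={\rm tr}(K^{-1}H)+{\rm tr}(H^{-1}K)-2\,{\rm rank}\,E$ by treating its two summands, ${\rm tr}\,h$ and ${\rm tr}\,h^{-1}$ with $h=K^{-1}H$, separately and then adding the resulting inequalities. Since $\sigma$ is the sum of these two traces (up to a constant killed by both $\tilde{\Delta}$ and $\partial_t$), it suffices to control $(\tilde{\Delta}-\partial_t)({\rm tr}\,h+{\rm tr}\,h^{-1})$.

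First I would compute the time derivatives. Differentiating $h=K^{-1}H$ and using the flow equation $H^{-1}\partial_t H=-4\Phi(H,\theta)$ (and similarly for $K$), one obtains $\partial_t({\rm tr}\,h)=-4\,{\rm tr}\big(h({\rm tr}_g F_H-{\rm tr}_g F_K)\big)-4\,{\rm tr}\big(h\,{\rm tr}_g[\theta,\theta^{*H}-\theta^{*K}]\big)$, and the analogous expression for $\partial_t({\rm tr}\,h^{-1})$ with $h$ replaced by $h^{-1}$ and $H,K$ swapped in the commutator term. Second, I would invoke the Bochner-type identities (\ref{THMHMMK}) and (\ref{THIMKMMH}), which express ${\rm tr}\big(h({\rm tr}_g F_H-{\rm tr}_g F_K)\big)$ in terms of $\tilde{\Delta}\,{\rm tr}\,h$ plus a term $-\,{\rm tr}\big({\rm tr}_g\bar{\partial}h\, h^{-1}\partial_K h\big)$; these give formulas for $\tilde{\Delta}\,{\rm tr}\,h$ and $\tilde{\Delta}\,{\rm tr}\,h^{-1}$. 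Subtracting the time-derivative formulas from the Laplacian formulas, the curvature-difference terms ${\rm tr}\big(h({\rm tr}_g F_H-{\rm tr}_g F_K)\big)$ cancel exactly, leaving
\begin{equation*}
(\tilde{\Delta}-\partial_t)({\rm tr}\,h+{\rm tr}\,h^{-1})=4\,{\rm tr}\big(-{\rm tr}_g\bar{\partial}h\,h^{-1}\partial_K h\big)+4\,{\rm tr}\big(-{\rm tr}_g\bar{\partial}h^{-1}\,h\,\partial_H h^{-1}\big)+4\,{\rm tr}\big(h\,{\rm tr}_g[\theta,\theta^{*H}-\theta^{*K}]\big)+4\,{\rm tr}\big(h^{-1}\,{\rm tr}_g[\theta,\theta^{*K}-\theta^{*H}]\big).
\end{equation*}

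The remaining task is to show each of the four terms on the right is nonnegative. For the first two, since $h$ is positive self-adjoint, writing things in an $H_0$- (or $K$-) orthonormal frame that diagonalizes $h$ turns $-\,{\rm tr}\big({\rm tr}_g\bar{\partial}h\,h^{-1}\partial_K h\big)$ into a manifestly nonnegative expression of the form $|{\rm tr}_g(\text{something})|_K^2$-type sum over eigenvalues — this is the standard positivity computation from the compact Hermitian-Einstein theory, adapted verbatim to the affine setting using that $\partial_K$ and $\bar{\partial}$ are the extended operators. For the last two commutator terms, I would use the identities already flagged in the excerpt, namely ${\rm tr}\big(h\,{\rm tr}_g[\theta,\theta^{*H}-\theta^{*K}]\big)=|\theta h^{1/2}-h\theta h^{-1/2}|_K^2\geq 0$ and ${\rm tr}\big(h^{-1}\,{\rm tr}_g[\theta,\theta^{*K}-\theta^{*H}]\big)=|h^{-1/2}\theta-h^{1/2}\theta h^{-1}|_K^2\geq 0$; verifying these reduces to substituting $\theta^{*H}=h^{-1}\theta^{*K}h$ (the metric-change rule for the Higgs adjoint, stated earlier) and expanding, again most transparently in a frame diagonalizing $h$. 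The main obstacle — really the only nontrivial point — is the careful bookkeeping in these positivity verifications: making sure the ${\rm tr}_g$-contractions and the wedge/bracket sign conventions of the affine Dolbeault complex are tracked correctly so that the Higgs contribution genuinely comes out with the right (nonnegative) sign rather than the wrong one. Once all four terms are seen to be $\geq 0$, adding them and discarding the constant $-2\,{\rm rank}\,E$ yields $(\tilde{\Delta}-\partial_t)\sigma(H(t),K(t))\geq 0$, completing the proof.
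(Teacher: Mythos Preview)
Your proposal is correct and follows essentially the same approach as the paper: the paper likewise computes $\partial_t$ and $\tilde{\Delta}$ of ${\rm tr}\,h$ and ${\rm tr}\,h^{-1}$ via (\ref{THMHMMK})--(\ref{THIMKMMH}), cancels the curvature-difference terms, and reduces the inequality to the nonnegativity of the same four terms, invoking exactly the two Higgs commutator identities you cite. The only cosmetic difference is that the paper does not spell out the positivity of the two gradient terms ${\rm tr}(-{\rm tr}_g\bar{\partial}h\,h^{-1}\partial_K h)$ and its companion, treating them as standard.
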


\subsection{The long-time existence of the affine Hermitian-Yang-Mills flow }

Let $(M,D,g)$ be a compact affine Gauduchon manifold (with possibly non-empty boundary), and $(E,\nabla,\theta)$ be a flat Higgs bundle with the initial Hermitian metric $H_0$. If $M$ is closed, then we consider the following evolution equation:
\begin{equation}\label{IPOAHYMF}
\left\{\begin{split}
  &H^{-1}(t)\frac{\partial H(t)}{\partial t}=-4({\rm tr}_g(F_{H(t)}+[\theta,\theta^{*{H(t)}}])-\lambda \cdot {\rm Id}_E),\\
  &H(0)=H_0.
  \end{split}\right.
\end{equation}
If $M$ is a compact manifold with non-empty smooth boundary $\partial M $, and the affine Gauduchon metric $g$ is smooth and non-degenerate on the boundary, for given data $\tilde{H}$ on $\partial M$, we consider the following Dirichlet boundary value problem:
\begin{equation}\label{DPOAHYMF}
\left\{\begin{split}
  &H(t)^{-1}\frac{\partial H(t)}{\partial t}=-4({\rm tr}_g(F_{H(t)}+[\theta,\theta^{*{H(t)}}])-\lambda\cdot {\rm Id}_E),\\
  &H(0)=H_0,\\
  &H|_{\partial M}=\tilde{H},
  \end{split}\right.
\end{equation}
where $H_0$ satisfies the boundary condition. Based on the formula (\ref{NAHYMF}), we know the above equations are non-linear strictly parabolic equation, so the standard parabolic theory gives the short-time existence:

\begin{prop}\label{PropSTS}
  For sufficiently small $T>0$, the equations (\ref{IPOAHYMF}) and (\ref{DPOAHYMF}) have a smooth solution defined for $0<t<T$.
\end{prop}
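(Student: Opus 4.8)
The statement to prove is Proposition \ref{PropSTS}: the short-time existence of solutions to the initial-value problem (\ref{IPOAHYMF}) and the Dirichlet problem (\ref{DPOAHYMF}) for the affine Hermitian-Yang-Mills flow on a compact affine Gauduchon manifold (possibly with boundary).

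\textbf{Overall strategy.} The plan is to reduce the evolution equation to a genuinely strictly parabolic system of PDEs for the matrix-valued unknown $H(t)$ in a locally constant frame, and then invoke the standard short-time existence theory for quasilinear parabolic systems (together with standard parabolic regularity to upgrade to a smooth solution). The key structural input is already recorded in the excerpt: formula (\ref{NAHYMF}) rewrites the flow (\ref{AHYMF}) as
\begin{equation*}
\frac{\partial H}{\partial t}=\tilde{\Delta}H+4\,{\rm tr}_g\bar{\partial}H H^{-1}\partial H+4\left(\lambda H-{\rm tr}_g[\theta,H^{-1}\bar{\theta}^TH]\right),
\end{equation*}
where $\tilde{\Delta}f=g^{ij}\partial^2 f/\partial x^i\partial x^j$ is the affine Laplacian. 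The leading-order term is $g^{ij}\partial_i\partial_j H$; since $g$ is a Riemannian metric, $(g^{ij})$ is positive definite, so the principal symbol is $-|\xi|_g^2\,\mathrm{Id}$ acting diagonally on the entries of $H$. Hence the system is strictly (uniformly) parabolic as long as $H$ stays positive definite, and the lower-order terms are smooth (indeed real-analytic) functions of $H$, $H^{-1}$, $\partial H$ and the fixed smooth data $g$, $\theta$. Because the initial datum $H_0$ is a smooth positive-definite Hermitian metric, positivity is an open condition preserved on a short time interval, so the quasilinear theory applies verbatim.

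\textbf{Steps in order.} First I would fix a finite cover of $M$ by local affine coordinate charts with a subordinate partition of unity, and in each chart choose a locally constant frame $\{s_\alpha\}$ for $E$, so that $H$ becomes a Hermitian positive-definite matrix-valued function and (\ref{NAHYMF}) becomes an explicit quasilinear parabolic system with smooth coefficients. Second, I would verify parabolicity: compute the principal symbol from (\ref{NAHYMF}) and check it equals $g^{ij}\xi_i\xi_j\,\mathrm{Id}$ on the space of Hermitian matrices, which is positive definite since $(g_{ij})$ is a Riemannian metric; this is the Legendre–Hadamard (indeed strong ellipticity) condition needed for the linearization. Third, for the closed case (\ref{IPOAHYMF}) I would apply the standard short-time existence theorem for quasilinear parabolic systems on compact manifolds (e.g.\ via Banach fixed point / inverse function theorem in parabolic Hölder spaces $C^{2+\alpha,1+\alpha/2}$, or Hamilton's Nash–Moser argument) to get a solution on $[0,T)$ for some $T>0$; one should note that the solution stays Hermitian and positive definite for small $t$ (Hermiticity is preserved because the right-hand side of (\ref{AHYMF}) is formally self-adjusted — $H^{-1}\partial_t H$ equals a Hermitian endomorphism times $-4$, so $\partial_t H$ stays Hermitian — and positivity by continuity), and hence the equation remains parabolic and the local solution is genuine. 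Fourth, for the Dirichlet problem (\ref{DPOAHYMF}) I would check that the boundary condition $H|_{\partial M}=\tilde H$ is of Dirichlet type for a strictly parabolic system — it trivially satisfies the Lopatinski–Shapiro complementing condition — and the compatibility condition at $t=0$ holds because $H_0|_{\partial M}=\tilde H$ by hypothesis; then the parabolic boundary-value theory again yields a short-time smooth solution, using that $g$ is smooth and non-degenerate up to $\partial M$. Finally, parabolic interior (and up-to-the-boundary) Schauder estimates bootstrap the $C^{2+\alpha,1+\alpha/2}$ solution to a $C^\infty$ solution on $M\times(0,T)$.

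\textbf{Main obstacle.} There is no deep obstacle here — this is the "standard parabolic theory" step acknowledged in the excerpt — but the point requiring genuine care is confirming \emph{strict} parabolicity of the full nonlinear system (\ref{NAHYMF}) as a system on Hermitian-matrix-valued functions, i.e.\ that the only second-order term is $\tilde\Delta H$ with scalar positive-definite symbol, so that the cross terms ${\rm tr}_g\bar\partial H\,H^{-1}\partial H$ are genuinely first order in $H$ and do not spoil ellipticity. Equivalently, one must be careful that, in the affine (real) setting where $dz^i=d\bar z^i=dx^i$, the operators $\partial$ and $\bar\partial$ each contribute one coordinate derivative, so $\bar\partial\partial H$ contributes the full Hessian $\partial_i\partial_j H$ and ${\rm tr}_g$ contracts it with $g^{ij}$ — giving exactly the stated principal part. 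Once that is pinned down, the diagonal/scalar nature of the leading symbol makes the system amenable to the textbook quasilinear (and quasilinear-with-Dirichlet-boundary) short-time existence and regularity results, and the proof is complete.
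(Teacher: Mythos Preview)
Your proposal is correct and follows exactly the approach the paper indicates: the paper does not give a detailed proof but simply notes, immediately before Proposition~\ref{PropSTS}, that formula~(\ref{NAHYMF}) exhibits the flow as a nonlinear strictly parabolic system and that ``the standard parabolic theory gives the short-time existence.'' Your write-up is a faithful unpacking of precisely this standard argument.
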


Next, according to the arguments of \cite{Do85}, \cite{Sim88} and \cite{ZX05}, we can show the long-time existence.

\begin{thm}\label{Thm3_6} {\rm (}\protect {\cite[Theorem 3.2]{ZX05}}{\rm )}
  Suppose that a smooth solution $H(t)$ of the evolution equation (\ref{IPOAHYMF}) or (\ref{DPOAHYMF}) is defined for $0\leq t <T<+\infty$. Then $H(t)$ converge in $C^0$-topology to some continuous non-degenerate metric as $t\rightarrow T$.
\end{thm}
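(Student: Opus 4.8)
The plan is to follow the now-standard strategy for proving long-time existence of parabolic gauge flows (Donaldson, Simpson, Zhang): the short-time solution from Proposition \ref{PropSTS} extends as long as the metrics stay in a compact region of the space of Hermitian metrics, and the a priori estimates of Section 3.1 provide exactly the control needed. Concretely, suppose $H(t)$ is defined on $[0,T)$ with $T<+\infty$. First I would show that $H(t)$ is uniformly bounded below and above (mutually bounded with $H_0$) on $[0,T)$. Writing $\Phi(t)=\Phi(H(t),\theta)$, the parabolic maximum principle applied to (\ref{HKOTMC}) and (\ref{HKOMC}) from Proposition \ref{PROP3_1} gives $\sup_M|\Phi(t)|_{H(t)}\le \sup_M|\Phi(0)|_{H_0}=:C_0$ for all $t\in[0,T)$, so in particular $\sup_M|{\rm tr}_g(F_{H(t)}+[\theta,\theta^{*H(t)}])|$ stays bounded. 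Then one estimates $\sigma(H(t),H_0)$: from the flow equation and (\ref{THMHMMK}), (\ref{THIMKMMH}) (or by comparing $H(t)$ to the stationary ``solution'' $H_0$, interpreting the relevant inequality as in Proposition \ref{HKOTAHYMM}), one derives
\begin{equation}
\left(\frac{\partial}{\partial t}-\tilde{\Delta}\right)\sigma(H(t),H_0)\le C\bigl(\sigma(H(t),H_0)+1\bigr)
\end{equation}
because the curvature term $\Phi$ is already bounded. A Gronwall-type argument using the maximum principle (in the boundary case one uses $H(t)|_{\partial M}=\tilde H=H_0|_{\partial M}$, so $\sigma=0$ on $\partial M$) then bounds $\sup_M\sigma(H(t),H_0)$ by a constant depending only on $C_0$ and $T$. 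Since $T<\infty$, this is finite: $H(t)$ and $H_0$ are mutually bounded uniformly on $[0,T)$.

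The second step is the key estimate controlling the $C^0$-norm of the ``velocity,'' i.e. of $h^{-1}\partial h/\partial t$ where $h=H_0^{-1}H$. The trick (as in Donaldson and Simpson) is that the quantity $|\Phi(H(t),\theta)|$ already controls $|\partial h/\partial t|$: from the flow equation $H^{-1}\partial_t H=-4\Phi(t)$, so $|H^{-1}\partial_t H|_{H}=4|\Phi(t)|_H\le 4C_0$, and combining this with the uniform mutual boundedness of $H(t)$ and $H_0$ one gets a uniform bound for $|\partial_t H|$ with respect to the fixed metric $H_0$. Integrating in $t$ over $[0,T)$ (a finite interval) then shows that $H(t)$ is uniformly Cauchy in $C^0$ as $t\to T$, hence converges in $C^0$-topology to a continuous metric $H(T)$. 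Finally, the mutual boundedness of $H(t)$ with $H_0$ passes to the limit, so the limit metric $H(T)$ is non-degenerate (bounded below away from zero). This proves the statement.

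The main obstacle, and the place requiring genuine care rather than routine calculation, is the passage from the bound on $|\Phi(H(t),\theta)|$ to the $C^0$-convergence of $H(t)$ itself as $t\to T$: one needs the velocity estimate to be uniform up to the boundary $\{t=T\}$ and to hold in a norm independent of $t$, which is why the uniform mutual boundedness of all the $H(t)$ with the fixed metric $H_0$ (established in the first step) is essential — without it, $|\Phi(t)|_{H(t)}$ bounded does not translate to $|\partial_t H|$ bounded in a fixed reference norm. In the Dirichlet case one must additionally check that all the maximum-principle arguments respect the boundary condition, which they do because $\tilde H=H_0|_{\partial M}$ forces $\sigma(H(t),H_0)$ and $|\Phi(t)|$ to satisfy the correct boundary behavior; this is the content borrowed from \cite{ZX05} and requires only that $g$ be non-degenerate up to $\partial M$, which is assumed. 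The Higgs-field terms $[\theta,\theta^{*H}]$ cause no new difficulty since $\theta$ is fixed and all the commutator terms appearing in Section 3.1 have favorable signs (as recorded in the displayed identities for ${\rm tr}(h({\rm tr}_g[\theta,\theta^{*H}-\theta^{*K}]))$).
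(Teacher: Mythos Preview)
Your argument is correct but organizes the two ingredients in the opposite order from the paper. The paper first establishes the Cauchy property directly, without any prior $C^0$ bound: given $\epsilon>0$, continuity at $t=0$ gives $\delta>0$ with $\sup_M\sigma(H(t_0),H(t_0'))<\epsilon$ for $t_0,t_0'<\delta$; then Proposition~\ref{HKOTAHYMM} applied to the pair of \emph{solutions} $H(\cdot+t_0)$ and $H(\cdot+t_0')$ together with the maximum principle propagates this bound forward, yielding $\sup_M\sigma(H(t),H(t'))<\epsilon$ for all $t,t'>T-\delta$. Only afterwards does the paper invoke the estimate $|\partial_t\log{\rm tr}\,h|\le 4|\Phi|_{H}$ (and its counterpart for $h^{-1}$) to bound $\sigma(H(t),H_0)$ uniformly on $[0,T)$ and conclude non-degeneracy of the limit.

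Your route inverts this: you first bound $\sigma(H(t),H_0)$ (via a Gronwall argument, since $H_0$ is not a solution and Proposition~\ref{HKOTAHYMM} does not apply as stated), and then use mutual boundedness to convert the $H(t)$-norm velocity bound $|H^{-1}\partial_tH|_H\le 4C_0$ into a fixed-norm Lipschitz estimate and integrate. This is perfectly valid; the paper's use of the Donaldson distance between two genuine solutions is slicker in that it yields the Cauchy property without first proving uniform equivalence to $H_0$, whereas your approach is more elementary and makes the role of the finite time $T<+\infty$ completely transparent. Note also that your Gronwall step is heavier than necessary: the pointwise inequality $|\partial_t\log{\rm tr}\,h|\le 4|\Phi|_{H}\le 4C_0$ already integrates directly to ${\rm tr}\,h\le re^{4C_0T}$ without any maximum principle, which is how the paper obtains the non-degeneracy.
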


\begin{proof}
  Given $\epsilon >0$, by the continuity at $t=0$ we can find a $\delta$ such that
  \begin{equation*}
  \sup\limits_M\sigma(H(t_0),H(t_0'))<\epsilon
  \end{equation*}
  for $0<t_0,t_0'<\delta$. Then Proposition \ref{HKOTAHYMM} and the maximum principle imply that
  \begin{equation*}
    \sup\limits_{M}\sigma(H(t),H(t'))<\epsilon
  \end{equation*}
  for all $t,t'> T-\delta$. This means that $H(t)$ are uniform Cauchy sequence and converge to a continuous limiting metric $H_T$. On the other hand, by Proposition \ref{PROP3_1}, we know that
  \begin{equation}
  \sup\limits_{M\times[0,T)}|{\rm tr}_g(F_{H(t)}+[\theta,\theta^{*{H(t)}}])-\lambda \cdot {\rm Id}_E|<C,
  \end{equation}
  where $C$ is a uniform constant only depending on the initial data $H_0$. A direct calculation yields
  \begin{equation}
 |\frac{\partial}{\partial t}(\log {\rm tr} h(t))|\leq 4|{\rm tr}_g(F_{H(t)}+[\theta,\theta^{*{H(t)}}])-\lambda\cdot {\rm Id}_E|_{H(t)}
  \end{equation}
  and
  \begin{equation}
    |\frac{\partial }{\partial t}(\log {\rm tr} h^{-1}(t))|\leq 4|{\rm tr}_g(F_{H(t)}+[\theta,\theta^{*{H(t)}}])-\lambda\cdot {\rm Id}_E|_{H(t)}.
  \end{equation}
  Using above formulas, one can conclude that $\sigma(H(t),H_0)$ are bounded uniformly on $M\times[0,T)$, therefore $H_T$ is a non-degenerate metric.
\end{proof}

Then we can also obtain the following results in the affine Gauduchon case.

\begin{lem}\label{Lem3_7}
   {\rm (}\protect {\cite[Lemma 3.3]{ZX05}}{\rm )}Let $(M,D,g)$ be a compact affine Gauduchon manifold without boundary $($with non-empty boundary$)$, $(E,\nabla,\theta)$ be the flat Higgs bundle with the initial Hermitian metric $H_0$. Let $H(t)$, $0\leq t<T$, be any one-parameter family of Hermitian metrics on $(E,\nabla,\theta)$ $($and satisfy the Dirichlet boundary condition$)$. If $H(t)$ converge in $C^0$-topology to some continuous metric $H_T$ as $t\rightarrow T$, and if $\sup\limits_{M}|{\rm tr}_gF_{H(t),\theta}|_{H_0}$ is bounded uniformly in $t$, then $H(t)$ are bounded in $C^1$ and also bounded in $L_2^p$ $($ for any $1<p<+\infty)$  uniformly in t.
\end{lem}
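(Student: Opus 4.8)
\quad The plan is to reduce everything to uniform bounds on the positive self-adjoint endomorphism $h(t)=H_0^{-1}H(t)$, and then to bootstrap the elliptic equation that $h(t)$ satisfies. First I would record that, since $H(t)\to H_T$ in $C^0$ with $H_T$ a (non-degenerate) continuous metric and $H(t)$ smooth on $M\times[0,T)$, the metrics $H(t)$ are uniformly bounded above and below: there is $\Lambda>1$ with $\Lambda^{-1}H_0\le H(t)\le\Lambda H_0$ for all $t\in[0,T)$, equivalently $h(t)$ and $h^{-1}(t)$ are uniformly bounded in $C^0(M)$.

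Next I would establish a uniform $L_1^2$-bound. Applying the identity \eqref{THMHMMK} with $K=H_0$, $H=H(t)$, and using ${\rm tr}_gF_{H(t)}=K_{H(t),\theta}-{\rm tr}_g[\theta,\theta^{*H(t)}]$, one gets
\[
\tilde\Delta\,{\rm tr}\,h(t)=-4\,{\rm tr}\!\big(h(K_{H(t),\theta}-K_{H_0,\theta})\big)
+4\,{\rm tr}\!\big(h\,{\rm tr}_g[\theta,\theta^{*H(t)}-\theta^{*H_0}]\big)
-4\,{\rm tr}\!\big({\rm tr}_g(\bar\partial_Eh\cdot h^{-1}\partial_{H_0}h)\big),
\]
where, as in the computations preceding Proposition \ref{HKOTAHYMM}, the last term is a nonnegative quantity which (by the $C^0$-bound on $h^{-1}(t)$) dominates $c\,|\bar\partial_E h(t)|^2_{H_0}$ for some $c=c(\Lambda)>0$, and the Higgs cross-term equals $|\theta h^{1/2}-h\theta h^{-1/2}|^2_{H_0}\ge 0$. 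Integrating against $\omega_g^n/\nu$ and using the affine Gauduchon condition $\partial\bar\partial(\omega_g^{n-1})=0$ — which gives $\int_M\tilde\Delta f\,\omega_g^n/\nu=0$ on a closed $M$, and in the boundary case leaves only a boundary term controlled because $h|_{\partial M}$ is prescribed and $t$-independent — the two nonnegative terms are bounded by the mean-curvature term:
\[
c\int_M|\bar\partial_Eh(t)|^2_{H_0}\,\frac{\omega_g^n}{\nu}
\le 4\int_M\big|{\rm tr}\!\big(h(K_{H(t),\theta}-K_{H_0,\theta})\big)\big|\,\frac{\omega_g^n}{\nu}\le C,
\]
uniformly in $t$, since $h(t)$ is $C^0$-bounded, $\sup_M|{\rm tr}_gF_{H(t),\theta}|_{H_0}\le C$ by hypothesis, $|K_{H_0,\theta}|_{H_0}$ is bounded on the compact $M$, and ${\rm Vol}(M)<\infty$. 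Since $\bar\partial_Eh$ and the flat-connection derivative of $h$ differ by a term bounded by $|h|\,|A_{H_0}|$, this yields a uniform $L^2$-bound on $\nabla h(t)$, and with the $C^0$-bound a uniform $L_1^2$-bound on $h(t)$.

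Finally I would bootstrap. From $F_{H(t)}-F_{H_0}=\bar\partial_E(h^{-1}\partial_{H_0}h)$ together with $\bar\partial_E\partial_{H_0}h=\bar\partial_Eh\wedge h^{-1}\partial_{H_0}h+h\,\bar\partial_E(h^{-1}\partial_{H_0}h)$, expressed in local affine coordinates, $h(t)$ satisfies a strictly elliptic equation of the form
\[
g^{ij}\frac{\partial^2 h}{\partial x^i\partial x^j}
= h\ast(K_{H(t),\theta}-K_{H_0,\theta})+h\ast[\theta,\theta^{*H(t)}-\theta^{*H_0}]
+ a\ast\nabla h\ast h^{-1}\ast\nabla h + b\ast\nabla h,
\]
with smooth coefficients $a,b$ built from $g$, $H_0$, $\theta$, whose inhomogeneous term lies in $L^\infty$ except for the quadratic first-order piece $a\ast\nabla h\ast h^{-1}\ast\nabla h$, which by the previous step is in $L^1$ uniformly in $t$. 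Interior (and, near $\partial M$, boundary) elliptic $L^p$-estimates then put $h(t)\in L_2^{p_1}$ for some $p_1>1$; Sobolev embedding upgrades $\nabla h(t)$ to $L^{q_1}$ with $q_1>2$, hence the quadratic term to $L^{q_1/2}$, and iterating finitely many times (the integrability gain being strict at each step) one reaches $\nabla h(t)\in L^\infty$, so $h(t)\in C^1$; then the whole right-hand side lies in $L^p$ for every $p$, so $h(t)\in L_2^p$ for all $1<p<\infty$, all bounds uniform in $t$ because the inputs are.

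The main point — and the only place where the affine Gauduchon hypothesis genuinely intervenes — is the integration by parts in the second step: one must use $\int_M\tilde\Delta f\,\omega_g^n/\nu=0$ on a closed special affine Gauduchon manifold (Loftin's well-definedness of the degree, replacing the K\"ahler identities of the compact K\"ahler case) and treat the boundary term in the Dirichlet case. The only other delicate point is that the bootstrap starts at the critical exponent $L^1$ for the quadratic term; both are handled exactly as in the Hermitian case of \cite{Sim88} and \cite{ZX05}, with the Beltrami Laplacian replaced throughout by the affine Laplace operator $\tilde\Delta$.
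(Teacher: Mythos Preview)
The paper does not actually prove this lemma; it is stated with a citation to \cite[Lemma~3.3]{ZX05} as a result that carries over verbatim to the affine Gauduchon setting, so there is no argument in the paper to compare against. Your outline is the standard one from \cite{Do85,Sim88,ZX05}: Steps~1 and~2 are correct, and your remark that the affine Gauduchon condition is precisely what makes $\int_M\tilde\Delta f\,\omega_g^n/\nu$ vanish on a closed $M$ is the right observation. (In the Dirichlet case your treatment is a little glib---the boundary term from integrating $\tilde\Delta\,\mathrm{tr}\,h$ involves the normal derivative, not just $h|_{\partial M}$---but this is repaired by the device of formula~\eqref{bformula}, multiplying by $\mathrm{tr}\,h-r$ before integrating.)

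The genuine gap is in Step~3. The iteration you describe,
\[
\nabla h\in L^q\ \Longrightarrow\ |\nabla h|^2\in L^{q/2}\ \Longrightarrow\ h\in L_2^{q/2}\ \Longrightarrow\ \nabla h\in L^{q'},\qquad q'=\frac{nq}{2n-q},
\]
gains ($q'>q$) only when $q>n$; starting from $q=2$ on a manifold of real dimension $n\ge 2$ it stalls ($n=2$) or loses ($n\ge 3$), so ``the integrability gain being strict at each step'' is false. Moreover, Calder\'on--Zygmund theory does not produce $h\in L_2^{p_1}$ for any $p_1>1$ from an $L^1$ right-hand side. The equation has quadratic (``natural'') growth in the gradient, which is exactly the borderline case where naive $L^p$-bootstrap fails. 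What the cited references actually invoke is either (i) the regularity theory for bounded weak solutions of elliptic systems with natural growth in the gradient (Ladyzhenskaya--Ural'tseva/Hildebrandt, as packaged in \cite{Ha75}), or (ii) a direct maximum-principle argument: one derives an elliptic inequality $\tilde\Delta|h^{-1}\partial_{H_0}h|^2_{H}\ge -C(1+|h^{-1}\partial_{H_0}h|^2_{H})$---the elliptic analogue of the computation \eqref{HKOTS} carried out in Proposition~\ref{LC_1}---and combines it with the $L^2$-bound from Step~2 via Moser iteration to get $\sup_M|h^{-1}\partial_{H_0}h|<\infty$. Once the $C^1$-bound is in hand, the right-hand side of your elliptic equation for $h$ lies in $L^\infty$ and the uniform $L_2^p$-bound for all $p$ follows from standard theory. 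You should replace the naive bootstrap by one of these two routes.
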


\begin{thm}\label{LTSOC}{\rm (}\protect {\cite[Theorem 3.4]{ZX05}}{\rm )}
  The evolution equations (\ref{IPOAHYMF}) and (\ref{DPOAHYMF}) have a unique solution $H(t)$ which exists for $0\leq t<+\infty$.
\end{thm}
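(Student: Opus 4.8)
The plan is to combine the short-time existence (Proposition \ref{PropSTS}), the $C^0$-convergence at a finite singular time (Theorem \ref{Thm3_6}), the higher-order a priori estimates (Lemma \ref{Lem3_7}), and a standard continuation argument. First I would invoke Proposition \ref{PropSTS} to obtain a smooth solution on a maximal time interval $[0,T_{\max})$; by definition this $T_{\max}\in(0,+\infty]$ is the supremum of all $T$ for which a smooth solution with the prescribed initial (and, in the Dirichlet case, boundary) data exists. The whole point is to rule out $T_{\max}<+\infty$. Suppose for contradiction that $T_{\max}<+\infty$. By Proposition \ref{PROP3_1}, equation (\ref{HKOMC}), the quantity $|{\rm tr}_g(F_{H(t)}+[\theta,\theta^{*H(t)}])-\lambda\cdot{\rm Id}_E|_{H(t)}$ satisfies a differential inequality with the right sign, so by the parabolic maximum principle it is bounded on $M\times[0,T_{\max})$ by its initial value; in particular $\sup_M|{\rm tr}_g F_{H(t),\theta}|_{H_0}$ is bounded uniformly in $t$ (using that $H(t)$ and $H_0$ stay mutually bounded, which follows from the next step).

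Next I would apply Theorem \ref{Thm3_6} to conclude that $H(t)$ converges in $C^0$-topology to a continuous non-degenerate metric $H_{T_{\max}}$ as $t\to T_{\max}$. Combined with the uniform bound on the mean curvature just obtained, Lemma \ref{Lem3_7} then gives uniform $C^1$ and $L^p_2$ bounds on $H(t)$ for $0\leq t<T_{\max}$. From here I would bootstrap: rewriting the flow in the form (\ref{NAHYMF}) as a strictly parabolic equation for $H$, the $L^p_2$ bound (with $p$ large, so $H\in C^{1,\alpha}$) feeds into the interior (and, in the Dirichlet case, boundary) Schauder estimates for parabolic equations to yield uniform $C^{k,\alpha}$ bounds on $M\times[\tau,T_{\max})$ for every $\tau>0$ and every $k$. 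Hence $H(t)\to H_{T_{\max}}$ in $C^\infty$, and in particular $H_{T_{\max}}$ is a smooth non-degenerate Hermitian metric (satisfying the boundary condition in the Dirichlet case). Using $H_{T_{\max}}$ as new initial data and applying Proposition \ref{PropSTS} again, we extend the smooth solution past $T_{\max}$, contradicting maximality. Therefore $T_{\max}=+\infty$.

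Finally, uniqueness: if $H(t)$ and $K(t)$ are two solutions with the same initial data (and boundary data), then Proposition \ref{HKOTAHYMM} gives $(\tilde\Delta-\tfrac{\partial}{\partial t})\sigma(H(t),K(t))\geq 0$ with $\sigma(H(0),K(0))=0$ and $\sigma\geq 0$; the parabolic maximum principle (together with the boundary condition $\sigma(H(t),K(t))|_{\partial M}=0$ in the Dirichlet case) forces $\sigma(H(t),K(t))\equiv 0$, i.e.\ $H(t)=K(t)$ for all $t$. I expect the main obstacle to be the bootstrapping step: one must check that the affine Laplacian $\tilde\Delta$, although not the Beltrami Laplacian, still differs from it only by a bounded first-order term (as recorded in the excerpt), so that the standard linear parabolic Schauder and $L^p$ theory applies with coefficients controlled in terms of $g$ and the uniform $C^1$ bound on $H$; the Dirichlet case additionally requires compatibility of the data at the corner $\partial M\times\{0\}$, which is assumed. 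None of these steps uses the affine Gauduchon condition beyond what is already built into Propositions \ref{PROP3_1} and \ref{HKOTAHYMM}, so the argument is parallel to the compact Hermitian case in \cite{ZX05}.
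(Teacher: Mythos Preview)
The paper does not supply an explicit proof of this theorem; it simply cites \cite[Theorem 3.4]{ZX05} after remarking that ``according to the arguments of \cite{Do85}, \cite{Sim88} and \cite{ZX05}, we can show the long-time existence.'' Your proposal is exactly the standard continuation argument from those references (short-time existence, $C^0$-limit at a hypothetical finite blow-up time via Theorem~\ref{Thm3_6}, uniform $C^1$ and $L^p_2$ bounds via Lemma~\ref{Lem3_7}, parabolic bootstrapping to $C^\infty$, restart, and uniqueness via Proposition~\ref{HKOTAHYMM}), and it is correct as written.
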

%\begin{proof}
%  Proposition \ref{PropSTS} guarantees that a solution exists for a short time. Suppose that the solution $H(t)$ exists for $0\leq t<T<+\infty$. By Theorem \ref{Thm3_6}, $H(t)$ converge in $C^0$-topology to a non-degenerate continuous limit metric $H(T)$ as $t\rightarrow T$. Since $t<+\infty$, from (\ref{HKOMC}) and the maximum principle, we conclude that $|{\rm tr}_gF_{H,\theta}-\lambda\cdot {\rm Id}_E|_H$ are bounded independent of $t$. Moreover, $|{\rm tr}_gF_{H,\theta}|^2_{H_{0}} $ are bounded independent of $t$. Hence by Lemma \ref{Lem3_7}, $H(t)$ are bounded in $C^1$  and also bounded in $L_2^p$ $($ for any $1<p<+\infty)$ uniformly in $t$. Since the evolution equations (\ref{IPOAHYMF}) and (\ref{DPOAHYMF}) are quadratic in the first derivative of $H$, we can apply Hamilton's method \cite{Ha75} to deduce that $H(t)\rightarrow H(T)$ in $C^{\infty}$, and the solution can be continued past $T$. Then the evolution equations (\ref{IPOAHYMF}) and (\ref{DPOAHYMF}) have a solution $H(t)$ defined for all time.
%
%  By Proposition \ref{HKOTAHYMM} and maximum principle, it is easy to conclude the uniqueness of the solution.
%\end{proof}

\subsection{The Dirichlet boundary problem for affine Hermitian-Einstein metric}
Since we have proved the long-time existence of (\ref{DPOAHYMF}), it remains for us to show that $H(t)$ will converge to the affine Hermitian-Einstein metric which we want. In this section, we will discuss the Dirichlet boundary problem for affine Hermitian-Einstein metric by using the heat equation method to deform an arbitrary initial metric to the desired one. Similar to the discussions of Donaldson \cite{Do92} and Simpson \cite{Sim88}, we also proved the following theorem.
\begin{thm}
  Let $(E,\nabla, \theta)$ be a flat Higgs bundle over the compact affine manifold $\overline{M}$ with non-empty boundary $\partial M$. Then for any Hermitian metric $\tilde{H}$ on the restriction of $E$ to $\partial M$, there
is a unique  affine Hermitian-Einstein metric $H$ on $E$ such that $H=\tilde{H}$ on $\partial M$.
\end{thm}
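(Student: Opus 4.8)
The plan is to solve the Dirichlet problem by combining the long-time existence of the affine Hermitian-Yang-Mills flow (\ref{DPOAHYMF}), already established in Theorem \ref{LTSOC}, with a convergence argument as $t \to +\infty$. So let $H(t)$, $0\le t<+\infty$, be the unique smooth solution of (\ref{DPOAHYMF}) with $H(0)=H_0$ and $H(t)|_{\partial M}=\tilde H$ for all $t$ (the boundary values are preserved since $H_0$ satisfies the boundary condition and the flow fixes the boundary). First I would record the boundary evolution: on $\partial M$ we have $\frac{\partial}{\partial t}H(t)|_{\partial M}=0$, hence $\Phi(H(t),\theta)=0$ on $\partial M$ for $t>0$. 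By Proposition \ref{PROP3_1}, equation (\ref{HKOMC}), the function $|\Phi(H(t),\theta)|_{H(t)}^2$ is a subsolution of the heat equation $\big(\frac{\partial}{\partial t}-\tilde\Delta\big)u\le 0$; combined with the bound on the initial data $\sup_M|\Phi(H_0,\theta)|_{H_0}<+\infty$ and the vanishing on the parabolic boundary $(\{0\}\times M)\cup([0,\infty)\times\partial M)$, the maximum principle gives a uniform bound $\sup_{M\times[0,+\infty)}|\Phi(H(t),\theta)|_{H(t)}\le C$. This is the main a priori estimate that drives everything else.

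Next I would show the flow converges in $C^0$. Since the boundary data are time-independent, for any two times $t,t'$ the quantity $\sigma(H(t),H(t'))$ vanishes on $\partial M$ and, by Proposition \ref{HKOTAHYMM}, satisfies $(\tilde\Delta-\frac{\partial}{\partial t})\sigma\ge 0$, i.e. it is a subsolution of the backward-type parabolic operator, so the maximum principle bounds $\sup_M\sigma(H(t),H(t'))$ by $\sup_M\sigma(H(0),H(t'-t))$ in a way that, using the exponential-in-time decay argument of Donaldson and Simpson (one shows $\frac{d}{dt}\int_M|\Phi|^2\frac{\omega_g^n}{\nu}\le 0$ and then that the $L^2$, hence $C^0$, norm of $\Phi$ decays), forces $H(t)$ to be a uniform Cauchy family as $t\to+\infty$ and converge in $C^0$ to a continuous non-degenerate metric $H_\infty$ (non-degeneracy coming from the uniform bound on $|\Phi|$ and the resulting control on $|\frac{\partial}{\partial t}\log\operatorname{tr}h(t)|$ as in the proof of Theorem \ref{Thm3_6}). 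Then I would invoke Lemma \ref{Lem3_7}: the uniform bound on $\sup_M|{\rm tr}_gF_{H(t),\theta}|_{H_0}$ together with $C^0$-convergence gives uniform $C^1$ and $L_2^p$ bounds on $H(t)$; bootstrapping via the (strictly parabolic) equation (\ref{NAHYMF}) and interior Schauder/elliptic estimates, the convergence improves to $C^\infty_{\rm loc}$ on the interior and up to the boundary, so $H_\infty$ is smooth, equals $\tilde H$ on $\partial M$, and — because $|\Phi(H(t),\theta)|\to 0$ — satisfies ${\rm tr}_g(F_{H_\infty}+[\theta,\theta^{*H_\infty}])=\lambda\cdot{\rm Id}_E$, i.e. $H_\infty$ is the desired affine Hermitian-Einstein metric.

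For uniqueness I would argue as follows: if $H$ and $K$ are two affine Hermitian-Einstein metrics with $H=K=\tilde H$ on $\partial M$, then by Proposition \ref{Prop3_4} the function $\sigma(H,K)$ is subharmonic with respect to $\tilde\Delta$ and vanishes on $\partial M$, so the maximum principle on the compact manifold $\overline M$ forces $\sigma(H,K)\equiv 0$, hence $H=K$. (One must check that $\tilde\Delta$, being strictly elliptic on the interior since $\omega_g$ is non-degenerate, admits the maximum principle; here the hypothesis that $g$ is smooth and non-degenerate up to $\partial M$ is used.)

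The main obstacle I expect is the convergence step: upgrading the $C^0$-Cauchy property of $H(t)$ to genuine convergence and to higher-order convergence, and in particular establishing the $L^2$-decay of $\Phi$ that makes the flow converge rather than merely stay bounded. In the Kähler/Hermitian case this is handled via the Donaldson functional, but — as the authors emphasize — that functional is unavailable here; instead one must run the argument directly with the evolution inequalities (\ref{HKOTMC}), (\ref{HKOMC}) and an integrated identity in the spirit of (\ref{Keyequality}), being careful that all integrations by parts only produce the $\partial\bar\partial\omega_g^{n-1}=0$ (affine Gauduchon) boundary terms, which vanish, plus genuine boundary contributions along $\partial M$ that are controlled because the boundary data are fixed. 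Once the $L^2$-decay is in hand, the rest is the standard parabolic bootstrap.
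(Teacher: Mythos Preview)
Your overall architecture is right --- run the flow (\ref{DPOAHYMF}), use Proposition~\ref{PROP3_1} to get $\Phi|_{\partial M}=0$ for $t>0$ and a uniform sup bound on $|\Phi|$, then try to prove $H(t)$ is $C^0$-Cauchy as $t\to\infty$, upgrade via Lemma~\ref{Lem3_7}, and conclude uniqueness from Proposition~\ref{Prop3_4} and the maximum principle. The uniqueness argument is exactly the paper's. The gap is precisely where you flag it: the convergence step.

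Your proposed mechanism (``$\frac{d}{dt}\int_M|\Phi|^2\le 0$, hence $L^2$ and then $C^0$ decay of $\Phi$'') does not close. Monotonicity of $\int_M|\Phi|^2$ gives boundedness, not decay; and your reduction $\sup_M\sigma(H(t),H(t'))\le \sup_M\sigma(H_0,H(t'-t))$ is circular, since the right-hand side is controlled only once you already know $|\Phi|$ decays. No ``exponential decay of Donaldson--Simpson'' is available here without the Donaldson functional, which, as you note, is absent in the affine Gauduchon setting.

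The paper's key idea, which you are missing, is a pointwise time-integrability bound on $|\Phi|$ obtained by comparison with an \emph{elliptic} barrier. One first passes from (\ref{HKOMC}) to the scalar inequality $(\tilde\Delta-\partial_t)|\Phi|_{H(t)}\ge 0$. Then one solves the Dirichlet problem
\[
\tilde\Delta v=-|\Phi(H_0,\theta)|_{H_0},\qquad v|_{\partial M}=0,
\]
and sets $w(x,t)=\int_0^t|\Phi(H(s),\theta)|_{H(s)}(x)\,ds-v(x)$. Since $|\Phi|$ vanishes on $\partial M$ for $t>0$, $w$ satisfies $(\tilde\Delta-\partial_t)w\ge 0$, $w(\cdot,0)=-v$, $w|_{\partial M}=0$, and the maximum principle yields
\[
\int_0^\infty |\Phi(H(s),\theta)|_{H(s)}(x)\,ds\le \sup_{\overline M} v \quad\text{for every }x.
\]
This is exactly what feeds into $\frac{\partial}{\partial t}\log\operatorname{tr}(H(t_1)^{-1}H(t))\le 4|\Phi|$ to give
\[
\sigma(H(t),H(t_1))\le 2r\Big(\exp\Big(4\int_{t_1}^t|\Phi|\,ds\Big)-1\Big)\to 0\quad\text{as }t_1,t\to\infty,
\]
hence $C^0$-convergence. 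The same integrability forces $|\Phi(H_\infty,\theta)|=0$. After that, Lemma~\ref{Lem3_7} plus standard regularity finishes as you outlined. So the missing ingredient is not an energy-decay argument but Donaldson's elliptic barrier trick for the Dirichlet problem.
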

\begin{proof}
Suppose $H(t)$ is the solution of (\ref{DPOAHYMF}) for $0\leq t<\infty$. By direct calculations, one can easily check that
\begin{equation}\label{Deta}
  |d|\eta|_H|^2\leq |\nabla_H\eta|_H^2
\end{equation}
for any section $\eta \in \Gamma({\rm End}(E))$.
This together with the formula (\ref{HKOMC}) in Proposition \ref{PROP3_1} gives out
\begin{equation}\label{DMPPT1}
  (\tilde{\Delta}-\frac{\partial}{\partial t})|{\rm tr}_g(F_{H(t)}+[\theta,\theta^{*H(t)}])-\lambda{\rm Id}_E|_{H(t)}\geq 0.
\end{equation}
According to \protect{\cite[Proposition 1.8]{Taylor}}, we can solve the following Dirichlet problem on $\overline{M}$:
\begin{equation}\label{DPOV}
\left\{\begin{split}
    &\tilde{\Delta}v=-|{\rm tr}_g(F_{H_0}+[\theta,\theta^{*H_0}])-\lambda{\rm Id}_E|_{H_0},\\
    &v|_{\partial M}=0.
  \end{split}\right.
\end{equation}
Then set $w(x,t)=\int_0^t|{\rm tr}_g(F_{H}+[\theta,\theta^{*H}])-\lambda{\rm Id}_E|_H(x,s)ds-v(x)$, where $v$ is the solution of the Dirichlet problem (\ref{DPOV}). From (\ref{DMPPT1}), (\ref{DPOV}) and the boundary condition satisfied by $H(t)$, we see that $|{\rm tr}_g(F_{H}+[\theta,\theta^*])-\lambda{\rm Id}_E|_{H}(x,t)$ vanishes on $\partial M$ for $t>0$. Then clearly $w(x,t)$ satisfies
\begin{equation}\label{ThreeCondition}
\left\{\begin{split}
    &(\tilde{\Delta}-\frac{\partial}{\partial t})w(x,t)\geq 0,\\
    &w(x,0)=-v(x),\\
    &w(x,t)|_{\partial M}=0.
  \end{split}\right.
\end{equation}
Applying the maximum principle, we have
\begin{equation}\label{I0tt}
  \int_0^t|{\rm tr}_g(F_{H}+[\theta,\theta^{*H}])-\lambda{\rm Id}_E|_{H}(x,s)ds \leq \sup\limits_{y\in \overline{M}} v(y),
\end{equation}
for any $x \in \overline{M}$ and $0< t \leq \infty$.

Let $0\leq t_1\leq t$, $\hat{h}(t)=H^{-1}(t_1)H(t)$. It is obvious that
\begin{equation}
  \hat{h}^{-1}(t)\frac{\partial \hat{h}(t)}{\partial t}=-4({\rm tr}_g(F_{H(t)}+[\theta,\theta^{*H(t)}])-\lambda{\rm Id}_E).
\end{equation}
Then
\begin{equation}\label{PPTOLTHH}
  \frac{\partial}{\partial t}\log{\rm tr}(\hat{h}(t)) \leq 4|{\rm tr}_g(F_{H(t)}+[\theta,\theta^{*H(t)}])-\lambda{\rm Id}_E|_{H(t)}.
\end{equation}
Integrating this from $t_1$ to $t$ gives us
\begin{equation}
  {\rm tr}(H^{-1}(t_1)H(t))\leq r\exp(4\int_{t_1}^t|{\rm tr}_g(F_{H}+[\theta,\theta^{*H}])-\lambda{\rm Id}_E|_{H}(s)ds).
\end{equation}
Similarly, we can get the estimate for ${\rm tr}(H^{-1}(t)H(t_1))$. Combining them together, we can deduce that
\begin{equation}\label{UODF}
  \sigma(H(t),H(t_1))\leq 2r(\exp(4\int_{t_1}^t|{\rm tr}_g(F_{H}+[\theta,\theta^{*H}])-\lambda {\rm Id}_E|_{H}(s)ds)-1).
\end{equation}
 Because of (\ref{I0tt}), we conclude that $H(t)$  converge in $C^0$-topology to some continuous metric $H_{\infty}$ as $t\rightarrow \infty$. Using Lemma \ref{Lem3_7}, we know that $H(t)$ have uniform $C^1$ and $L_2^p(1<p<\infty)$ bounds. On the other hand, $|H^{-1}(t)\frac{\partial H(t)}{\partial t}|$ is bounded uniformly. This together with the standard elliptic regularity shows that there exists a subsequence $H(t)\rightarrow H_{\infty}$ in $C^{\infty}$-topology as $t\rightarrow \infty$. Of course (\ref{I0tt}) means that
\begin{equation}
  {\rm tr}_g(F_{H_{\infty}}+[\theta,\theta^{*H\infty}])=\lambda {\rm Id}_E,
\end{equation}
i.e. $H_{\infty}$ is the desired affine Hermitian-Einstein metric satisfying the boundary condition. From Proposition \ref{Prop3_4} and the maximum principle, the uniqueness of the solution follows.
\end{proof}

\section{The affine Hermitian-Yang-Mills flow on non-compact affine Gauduchon manifold}

In this section, we will discuss the affine Hermitian-Yang-Mills flow on some non-compact manifold. Before the discussion, we need some important propositions. First, we derive the local $C^1$-estimate. Then, we want to get following identity in the non-compact affine Gauduchon case:
\begin{equation}\label{KI}
\int_{M}{\rm tr}\left(\Phi(H_0,\theta)s\right)\frac{\omega^n_g}{\nu}+\int_{M}\langle\Psi(s)(D''s),D''s\rangle_{H_0}\frac{\omega^n_g}{\nu}=\int_{M}{\rm tr}\left(\Phi(H,\theta)s\right)\frac
  {\omega^n_g}{\nu}.
\end{equation}

Consider the following affine Hermitian-Yang-Mills flow
\begin{equation}\label{AHYMF2}
 H^{-1}(t)\frac{\partial H(t)}{\partial t}=-4({\rm tr}_g(F_{H(t)}+[\theta,\theta^{*H(t)}])-\lambda\cdot {\rm Id}_E).
\end{equation}

\begin{prop}\label{LC_1}
  Suppose $H(t)$ is a long-time solution of the flow (\ref{AHYMF2}) on the compact affine manifold ${\overline{M}}$(with nonempty smooth boundary $\partial M$). Set $h(t)=H_0^{-1}H(t)$ and assume there exists a constant $\bar{C}_0$ such that
  \begin{equation}\label{C0Condition}
  \sup\limits_{(x,t)\in \overline{M}\times[0,+\infty)}|\log h(t)|_{H_0}\leq {\bar{C}_0}.
  \end{equation}
  Then, for any compact subset $\Omega \subset \overline{M}$, there exists a uniform constant $\bar{C}_1$ depending only on $\bar{C}_0,d^{-1}$ and  the geometry of $\tilde{\Omega}$ such that
  \begin{equation}\label{C1result}
   \sup\limits_{(x,t)\in \Omega\times[0,+\infty)}|h^{-1}(t)\partial_{H_0}h(t)|_{H_0}\leq {\bar{C}_1},
  \end{equation}
  where $d$ is the distance of $\Omega$ to $\partial M$, and $\tilde{\Omega}=\{x\in \overline{M}|{\rm dist}(x,\Omega)\leq \frac{1}{2}d\}$.
\end{prop}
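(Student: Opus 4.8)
The plan is to establish the interior $C^1$-estimate \eqref{C1result} by a Bochner-type argument combined with a localization via cutoff functions, in the spirit of the gradient estimates of Donaldson and Simpson (and the affine versions in \cite{ZX05}). The natural quantity to control is $e(t) = |h^{-1}(t)\partial_{H_0}h(t)|_{H_0}^2$, which is (up to a harmless notational identification) the norm-squared of $\partial_H - \partial_{H_0} = h^{-1}\partial_{H_0}h$; equivalently one can work with the full covariant derivative $|\nabla h|$ of the endomorphism $h$, since the two are comparable once $|\log h|_{H_0}$ is bounded by $\bar C_0$, which is exactly hypothesis \eqref{C0Condition}. So first I would fix notation, reduce to estimating $e(t)$, and record the elementary consequences of \eqref{C0Condition}: $h, h^{-1}$ are uniformly bounded, hence $H(t)$ and $H_0$ are mutually bounded on $\overline M\times[0,\infty)$ with constants depending only on $\bar C_0$.

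The core computation is to derive a differential inequality of the form
\begin{equation}
\left(\frac{\partial}{\partial t} - \tilde\Delta\right) e(t) \leq -c_1 |\nabla^2 h|^2_{H_0} + c_2\, e(t)^2 + c_3\, e(t) + c_4,
\end{equation}
where $\nabla^2$ denotes the appropriate second-order covariant derivative, $c_1 > 0$, and $c_2, c_3, c_4$ depend only on $\bar C_0$, on $\sup|{\rm tr}_g(F_{H_0}+[\theta,\theta^{*H_0}])|_{H_0}$, on $|\theta|$, on $|d\omega_g|_g$, and on the local geometry of $\tilde\Omega$ (through the curvature of $g$, the vector field $V$ measuring $\tilde\Delta - \Delta$, and bounds on $\partial\bar\partial\phi$). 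To get this one differentiates $e(t)$ using the flow equation \eqref{AHYMF2} for $\partial_t h$ and the identities $F_H - F_{H_0} = \bar\partial_E(h^{-1}\partial_{H_0}h)$, $\theta^{*H} = h^{-1}\theta^{*H_0}h$ already recorded in the proof of Proposition~\ref{PROP3_1}; the Laplacian term produces a good negative $|\nabla^2 h|^2$ contribution by the same Kato-type inequality \eqref{Deta}, while the curvature, Higgs-field, and non-Kähler torsion terms ($\tilde\Delta$ vs.\ $\Delta$, and the failure of $d\omega_g = 0$) are absorbed into the lower-order terms using Young's inequality and \eqref{C0Condition}. This is where the hypothesis $|d\omega_g|_g \in L^\infty$ and the bound on $K_{H_0,\theta}$ enter essentially.

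Next I would localize. Choose a smooth cutoff $\varphi$ supported in $\tilde\Omega$ with $\varphi \equiv 1$ on $\Omega$, $|\nabla\varphi|^2 + |\tilde\Delta\varphi| \leq C/d^2$, and consider the auxiliary function $G(t) = (A + |\log h|^2_{H_0})\, e(t)$ for a large constant $A = A(\bar C_0)$, or more directly $\varphi^2 e(t)$. The point of the weight $(A + |\log h|^2)$ — this is Simpson's trick — is that $\tilde\Delta |\log h|^2_{H_0}$ contributes a term comparable to $e(t)$ with the right (negative) sign when $A$ is large, via an inequality of the type $(\partial_t - \tilde\Delta)|\log h|^2_{H_0} \leq -c\, e(t) + (\text{bounded})$ that follows from \eqref{Keyequality}-style manipulations and \eqref{C0Condition}; this is precisely what kills the dangerous quadratic term $c_2 e(t)^2$ in the differential inequality for $e(t)$ after multiplying through. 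Applying the parabolic maximum principle to $\varphi^2 G$ (or $\varphi^2 e$) on $\tilde\Omega \times [0,T]$ and letting $T\to\infty$, at an interior spatial maximum the gradient terms vanish and one is left with an algebraic inequality bounding $e$ at that point by a constant depending on $\bar C_0$, $d^{-1}$, and the geometry of $\tilde\Omega$; since the maximum of $\varphi^2 e$ dominates $e$ on $\Omega$, this yields \eqref{C1result}. One must also handle $t$ near $0$: either the initial metric $H_0$ already satisfies the bound, or one uses that on any slab $[0,\delta]$ short-time parabolic regularity gives control, and the maximum-principle argument on $[\delta,\infty)$ takes over.

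\textbf{Main obstacle.} The delicate point is the bookkeeping in the Bochner inequality: one must verify that all the terms arising from the non-Kähler (affine Gauduchon) structure — the discrepancy $\tilde\Delta - \Delta = \langle V,\nabla\cdot\rangle_g$, the torsion $d\omega_g \neq 0$, and the curvature of $g$ appearing when commuting derivatives — are genuinely lower order in $e(t)$ and can be absorbed, rather than producing a term of order $e(t)^{3/2}$ or worse that the weight trick cannot handle; keeping careful track of which constants depend only on $\bar C_0$, $d^{-1}$, and the fixed geometry of $\tilde\Omega$ (and not on $t$ or on $M$ globally) is the crux, and is exactly the step where the hypotheses $|d\omega_g|_g\in L^\infty$ and $\sup_X|{\rm tr}_g(F_{H_0}+[\theta,\theta^{*H_0}])|_{H_0}<\infty$ are indispensable.
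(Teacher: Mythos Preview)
Your strategy — Bochner inequality for the connection difference, an auxiliary weight, a cutoff, and the parabolic maximum principle — is the right one, and it is what the paper does. But the execution in the paper differs in two specific ways that make the argument considerably cleaner than your outline suggests. First, the paper measures $\mathcal{T}=h^{-1}\partial_{H_0}h$ in the \emph{evolving} norm $|\cdot|_{H(t)}$ rather than in $|\cdot|_{H_0}$; with this choice the parabolic Bochner computation yields a \emph{linear} inequality
\[
(\tilde{\Delta}-\partial_t)|\mathcal{T}|_H^2 \;\geq\; |\nabla_H\mathcal{T}|_H^2 - \hat{C}_3|\mathcal{T}|_H^2 - \hat{C}_3
\]
on $\tilde{\Omega}\times[0,\infty)$, with no $e^2$ term at all. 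The quadratic term you anticipate comes from the commutator $[\mathcal{T},\cdot]$ when rewriting $\partial_H$ in terms of $\partial_{H_0}$, and it disappears once you work natively in the $H$-connection. Second, and correspondingly, the paper does not use a multiplicative Simpson weight: it takes the \emph{additive} test function
\[
f \;=\; \varphi_1^2|\mathcal{T}|_H^2 \;+\; W\,\varphi_2^2\,\mathrm{tr}\,h
\]
with two \emph{nested} cutoffs $\varphi_1\prec\varphi_2$ (so that the good term from $\mathrm{tr}\,h$ is fully active wherever the cutoff errors of $\varphi_1$ live). The key input is $(\tilde{\Delta}-\partial_t)\mathrm{tr}\,h \geq 4\,\mathrm{tr}(\mathrm{tr}_g h^{-1}\partial_{H_0}h\,\bar{\partial}h)+4\,\mathrm{tr}(h\,\Phi(H_0,\theta))$, whose first term dominates $4e^{-\bar{C}_0}|\mathcal{T}|_H^2$; scaling by a large $W$ absorbs all the linear bad terms and leaves $(\tilde{\Delta}-\partial_t)f \geq \varphi_2^2|\mathcal{T}|_H^2 - \tilde{C}_0$, so at the maximum one reads off the bound directly. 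Your multiplicative weight $(A+|\log h|^2)e$ with large $A$ is aimed at killing a quadratic term that, with the right choice of norm, is not there — and as written it is not clear large $A$ helps, since the bad $e^2$ term gets multiplied by $A$ as well. Your route can be made to work, but the paper's additive $\mathrm{tr}\,h$ weight with the $H$-norm is both simpler and avoids the delicate balancing you flag as the main obstacle.
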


\begin{proof}
  We will follow the argument in \protect {\cite[Lemma 2.4]{LZZ17}} to get the local uniform $C^1$-estimate. For simplicity, we denote $\mathcal{T}(t)=h^{-1}(t)\partial_{H_0}h(t)$ and $\Phi(H(t),\theta)={\rm tr}_gF_{H(t),\theta}-\lambda\cdot {\rm Id}_E$. Then the direct computations give us that
  \begin{equation}\label{HKOTH}
  \begin{split}
    (\tilde{\Delta}-\frac{\partial}{\partial t}){\rm tr}h&=4{\rm tr}(-{\rm tr}_g\bar{\partial}hh^{-1}\partial_{H_0}h)+4{\rm tr}(h\Phi(H_0,\theta))
    +4{\rm tr}(h[\theta,\theta^{H}-\theta^{*H_0}])\\
    &=4{\rm tr}(-{\rm tr}_g\bar{\partial}hh^{-1}\partial_{H_0}h)+4{\rm tr}(h\Phi(H_0,\theta))
    +4{\rm tr}([\theta,h]\wedge h^{-1}[h,\theta^{*H_0}])\\
    &\geq 4{\rm tr}(-{\rm tr}_g\bar{\partial}hh^{-1}\partial_{H_0}h)+4{\rm tr}(h\Phi(H_0,\theta)),
  \end{split}
  \end{equation}
  \begin{equation}
    \frac{\partial}{\partial t}\mathcal{T}=\frac{\partial}{\partial t}(H^{-1}\partial H)=\partial_{H_0}(h^{-1}\frac{\partial h}{\partial t})=-4\partial_{H}\left(\Phi(H,\theta)\right),
  \end{equation}
 and
\begin{equation}\label{HKOTS}
\begin{split}
      (\tilde{\Delta}-\frac{\partial}{\partial t})|\mathcal{T}|_H^2\geq & |\nabla_H\mathcal{T}|_H^2-\hat{C}_1(|{\rm tr}_gF_{H_0}|_H+|F_{H_0}|_H+|\theta|_H^2+|Ric(g)|_g+|\nabla_gJ|)|\mathcal{T}|^2_H\\
      &-\hat{C}_2|\nabla_{H_0}{\rm tr}_gF_{H_0}|_H|\mathcal{T}|_H-4|\nabla_{H_0}\theta|_H^2,
    \end{split}
    \end{equation}
  where $J$ is the complex structure on $M^{\mathbb{C}}$ and  positive constants $\hat{C}_1,\hat{C}_2$ depend only on the dimension $n$ and rank $r$. By (\ref{HKOTS}) and Proposition \ref{PROP3_1}, we have
  \begin{equation}\label{NHKOTS}
  (\tilde{\Delta}-\frac{\partial}{\partial t})|\mathcal{T}|_H^2\geq |\nabla_H\mathcal{T}|_H^2-\hat{C}_3|\mathcal{T}|_H^2-\hat{C}_3
  \end{equation}
  on the domain $\tilde{\Omega}\times[0,+\infty)$, where $\hat{C}_3$ is a uniform constant depending only on $\bar{C}_0$, $\sup_{\tilde{\Omega}}|{\rm tr}_gF_{H_0}|_{H_0}$, $\sup_{\tilde{\Omega}}|F_{H_0}|_{H_0}$, $\sup_{\tilde{\Omega}}|\nabla_{H_0}{\rm tr}_gF_{H_0}|_{H_0}$, $\sup_{\tilde{\Omega}}|\nabla_{H_0}\theta|_{H_0}$, $\sup_{\tilde{\Omega}}|\theta|_{H_0}$ and the geometry of $\tilde{\Omega}$.

 Set $\bar{\Omega}=\left\{x\in \overline{M}|\,{\rm dist}(x,\Omega) \leq \frac{1}{4}d\right\}$. Let $\varphi_1$, $\varphi_2$ be  the non-negative cut-off functions satisfying:
  \begin{equation*}
  \varphi_1=\left\{\begin{split}
    1,\ \ &x\in \Omega,\\
    0,\ \ &x\in M\setminus\bar{\Omega},
  \end{split}\right.
  \end{equation*}
  \begin{equation*}
     \varphi_2=\left\{\begin{split}
    1,\ \ &x\in\bar{\Omega},\\
    0,\ \ &x\in M\setminus \tilde{\Omega},
  \end{split}\right.
  \end{equation*}
 and
 \begin{equation*}
   |d\varphi_i|^2+|\tilde{\Delta}\varphi_i|\leq c,i=1,2,
 \end{equation*}
where $c$ is a constant depending only on $d^{-2}$. Consider the following test function
\begin{equation}
  f(\cdot,t)=\varphi_1^2|\mathcal{T}|_{H}^2+W\varphi_2^2{\rm tr}h,
\end{equation}
where the constant $W$ will be chosen large enough later. It follows from (\ref{HKOTH}) and (\ref{HKOTS}) that
\begin{equation}
\begin{split}
  (\tilde{\Delta}-\frac{\partial}{\partial t})f\geq & \varphi_1^2(|\nabla_H\mathcal{T}|_H^2-\hat{C}_3|\mathcal{T}|_H^2-\hat{C}_3)+\tilde{\Delta}
  \varphi_1^2|\mathcal{T}|_H^2\\
  &+4\langle\varphi_1\nabla \varphi_1,\nabla|\mathcal{T}|_H^2\rangle\\
  &+W\tilde{\Delta}\varphi_2^2{\rm tr}h+4W\langle\varphi_2\nabla\varphi_2,\nabla{\rm tr}h\rangle\\
  &+4W\varphi_2^2{\rm tr}({\rm tr}_gh^{-1}\partial_{H_0}h\bar{\partial}h+h\Phi(H_0,\theta)).
\end{split}
\end{equation}
Noting that
\begin{equation}
\begin{split}
  4\langle\varphi_1\nabla\varphi_1,\nabla|\mathcal{T}|_H^2\rangle&\geq -8\varphi_1|\nabla \varphi_1||\mathcal{T}|_H|\nabla_H\mathcal{T}|_H\\
  &\geq-\varphi_1^2|\nabla_H\mathcal{T}|_H^2-16|\nabla\varphi_1|^2|\mathcal{T}|_H^2,
\end{split}
\end{equation}
\begin{equation}
  2W\langle\varphi_2\nabla\varphi_2,\nabla{\rm tr}h\rangle \geq-\varphi_2^2|\nabla{\rm tr}h|^2_H-W^2|\nabla\varphi_2|^2,
\end{equation}
and
\begin{equation}
\begin{split}
  |\mathcal{T}|^2_H&={\rm tr}({\rm tr}_gh^{-1}\partial_{H_0}hH^{-1}\overline{(h^{-1}\partial_{H_0}h)}^TH)\\
  &={\rm tr}({\rm tr}_gh^{-1}\partial_{H_0}h h^{-1}\bar{\partial}h)\\
  &\leq e^{\bar{C}_0}{\rm tr}({\rm tr}_gh^{-1}\partial_{H_0}h\bar{\partial}h).
\end{split}
\end{equation}
Then there holds that
\begin{equation}
  (\tilde{\Delta}-\frac{\partial}{\partial t})f\geq\varphi_2^2(-\hat{C}_3-18c-4e^{2\bar{C}_0}+4We^{-\bar{C}_0})|\mathcal{T}|_H^2-\tilde{C}_0,
\end{equation}
where $\tilde{C}_0$ is a positive constant depending only on $\bar{C}_0$ and $\hat{C}_3$. If we choose
\begin{equation}
  W=\frac{1}{4}e^{-\bar{C}_0}(\hat{C}_3+18c+4e^{2\bar{C}_0}+1),
\end{equation}
then we can obtain
\begin{equation}\label{HKOf}
(\tilde{\Delta}-\frac{\partial }{\partial t})f\geq \varphi_2^2|\mathcal{T}|^2_H-\tilde{C}_0
\end{equation}
on $M\times[0,+\infty)$. Let $f(q,t_0)=\max\limits_{M\times[0,+\infty)}f$. On the basis of the definition of $\varphi_i$ and the local uniform $C^0$-bound of $h(t)$, we may assume that
\begin{equation*}
  (q,t_0)\in \overline{M}\times(0,+\infty).
\end{equation*}
Of course the inequality (\ref{HKOf}) yields
\begin{equation}
|\mathcal{T}(t_0)|^2_{H(t_0)}(q)\leq \tilde{C}_0.
\end{equation}
So there exists a uniform constant $\bar{C}_1$ depending only on $\bar{C}_0,d^{-1}$ and the geometry of $\tilde{\Omega}$ such that
\begin{equation*}
  \sup\limits_{(x,t)\in \Omega\times[0,+\infty)}|h^{-1}(t)\partial_{H_0}h(t)|_{H_0}\leq {\bar{C}_1}.
\end{equation*}
\end{proof}

Now we want to prove the key identity (\ref{KI}), which plays an important role in the proof of Theorem \ref{Maintheorem}.

Suppose $(M,D,g,\nu)$ is a compact affine Gauduchon manifold with non-empty smooth boundary $\partial M$. Let $\phi$ be  a smooth function defined on $M$ and satisfy the boundary condition $\phi|_{\partial M}=a$, where $a$ is a constant. Integrating by parts and applying Stokes' formula, one can obtain
\begin{equation}\label{bformula}
\begin{split}
  &\int_M|d\phi|^2\frac{\omega^n_g}{\nu}\\
  =&2n\int_M\frac{\partial \phi \wedge \bar{\partial}\phi\wedge \omega^{n-1}_g}{\nu}\\
  =&2n\int_M\frac{\partial(\phi\bar{\partial}\phi)\wedge \omega^{n-1}_g}{\nu}-2n\int_M\phi\frac{\partial\bar{\partial}\phi\wedge \omega^{n-1}_g}{\nu}\\
  =&2n\int_{M}(a-\phi)\frac{\partial\bar{\partial}\phi\wedge \omega^{n-1}_g}{\nu}-2n\int_M\frac{\partial((a-\phi)\bar{\partial}\phi)\wedge\omega^{n-1}_g}{\nu}\\
  =&\frac{1}{2}\int_M(a-\phi)\tilde{\Delta}\phi\frac{\omega^n_g}{\nu}+2n\int_M\frac{\partial(\bar{\partial}(a-\phi)^2\wedge\omega^{n-1}_g)}{\nu}+2n\int_M
  \frac{\bar{\partial}(a-\phi)^2\wedge\partial\omega^{n-1}_g}{\nu}\\
  =&\frac{1}{2}\int_M(a-\phi)\tilde{\Delta}\phi\frac{\omega^n_g}{\nu}+n\int_Md(\frac{\bar{\partial}(a-\phi)^2\wedge\omega^{n-1}_g}{\nu})+n\int_Md
  (\frac{(a-\phi)^2\wedge \partial \omega^{n-1}_g}{\nu})\\
  =&\frac{1}{2}\int_M(a-\phi)\tilde{\Delta}\phi\frac{\omega^n_g}{\nu}.
\end{split}
\end{equation}
Then similar to the argument in \cite{Sim88}, we can get the following lemma.

\begin{lem}\label{SFLem}
  Assume $(X,D,g,\nu)$ is a special non-compact affine Gauduchon manifold admitting an exhaustion function $\phi$ with $\int_X|\tilde{\Delta}\phi|\frac{\omega^{n}_g}{\nu}<+\infty$, and suppose $\eta$ is an $(n-1)$-form with $\int_X|\eta|^2\frac{\omega^n_g}{\nu}<+\infty$. Then if $d\eta$ is integrable, we have
  \begin{equation*}
      \int_Xd\eta=0.
  \end{equation*}
\end{lem}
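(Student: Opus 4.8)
The plan is to cut off $\eta$ with a sequence of Lipschitz functions built from the exhaustion function $\phi$, apply Stokes' theorem to the truncated forms (which have compact support), and then pass to the limit, using the hypothesis $\int_X|\tilde\Delta\phi|\frac{\omega^n_g}{\nu}<+\infty$ to control the error term that arises from differentiating the cut-off. Concretely, for $N>0$ let $\gamma_N:[0,+\infty)\to[0,1]$ be a fixed smooth non-increasing function with $\gamma_N\equiv 1$ on $[0,N]$, $\gamma_N\equiv 0$ on $[2N,+\infty)$, and $|\gamma_N'|\le 2/N$, and set $\chi_N=\gamma_N\circ\phi$. Since $\phi$ is an exhaustion function, $\chi_N$ has compact support, so $\chi_N\eta$ is a compactly supported $(n-1)$-form on $X$ and Stokes gives $\int_X d(\chi_N\eta)=0$, i.e.
\begin{equation}
\int_X \chi_N\, d\eta = -\int_X d\chi_N\wedge\eta .
\end{equation}
Because $d\eta$ is integrable and $\chi_N\to 1$ pointwise with $0\le\chi_N\le 1$, dominated convergence shows the left side tends to $\int_X d\eta$; it remains to show the right side tends to $0$.

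For the right side, write $d\chi_N = (\gamma_N'\circ\phi)\,d\phi$, so that $|d\chi_N\wedge\eta|\le |\gamma_N'\circ\phi|\,|d\phi|\,|\eta|$, and this is supported in the region $A_N=\{N\le\phi\le 2N\}$. By Cauchy–Schwarz,
\begin{equation}
\left|\int_X d\chi_N\wedge\eta\,\frac{\omega^n_g}{\nu}\right|
\le \left(\int_{A_N}|\gamma_N'\circ\phi|^2\,|d\phi|^2\,\frac{\omega^n_g}{\nu}\right)^{1/2}
\left(\int_{A_N}|\eta|^2\,\frac{\omega^n_g}{\nu}\right)^{1/2}.
\end{equation}
The second factor tends to $0$ as $N\to\infty$ since $\int_X|\eta|^2\frac{\omega^n_g}{\nu}<+\infty$ and the sets $A_N$ escape to infinity. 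For the first factor, I would apply the integration-by-parts identity (\ref{bformula}): with $\psi=\gamma_N'(\phi)^2$ one gets, via the Gauduchon condition $\partial\bar\partial(\omega_g^{n-1})=0$, a bound of the form $\int_{A_N}|\gamma_N'\circ\phi|^2|d\phi|^2\frac{\omega^n_g}{\nu}\le C\int_{A_N}\big(|\gamma_N'\circ\phi|^2+|\gamma_N\gamma_N''\circ\phi|\big)|\tilde\Delta\phi|\frac{\omega^n_g}{\nu}$; since $|\gamma_N'|\le 2/N$ and one may take $|\gamma_N''|\le C/N^2$, this is bounded by $\frac{C}{N}\int_X|\tilde\Delta\phi|\frac{\omega^n_g}{\nu}$, which goes to $0$. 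Hence both factors are controlled and $\int_X d\chi_N\wedge\eta\to 0$, giving $\int_X d\eta=0$.

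The main obstacle is making the estimate of $\int|d\chi_N|^2$ rigorous: the formula (\ref{bformula}) as stated is for functions that are constant on $\partial M$ on a compact manifold with boundary, so I must either re-derive it on the compact sublevel sets $\{\phi\le 2N\}$ (where $\chi_N$ and its normal behavior vanish near the boundary $\{\phi=2N\}$, so no boundary terms appear) or directly integrate by parts on $X$ using that $\gamma_N'(\phi)^2$ is compactly supported. One must be careful that $\phi$ is only assumed smooth with $\tilde\Delta\phi\in L^1$, not with a one-sided bound, so the argument should use only $L^1$ control of $\tilde\Delta\phi$ and never a maximum principle; and the Gauduchon hypothesis is exactly what kills the term $\int \frac{\bar\partial(\cdots)\wedge\partial\omega_g^{n-1}}{\nu}$ that would otherwise obstruct the identity. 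Once these points are handled the limit passage is routine.
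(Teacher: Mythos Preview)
Your proposal is correct and is precisely the standard cut-off argument of Simpson that the paper invokes (the paper gives no proof of its own, only the sentence ``similar to the argument in \cite{Sim88}'' together with the preparatory identity (\ref{bformula})). One small clean-up: rather than applying (\ref{bformula}) to the composite $\gamma_N'(\phi)^2$, it is simpler to apply it directly to $\phi$ on the sublevel set $\{\phi\le 2N\}$ (choosing $2N$ a regular value), which yields $\int_{\{\phi\le 2N\}}|d\phi|^2\,\frac{\omega_g^n}{\nu}\le N\int_X|\tilde\Delta\phi|\,\frac{\omega_g^n}{\nu}$ and hence $\int_X|d\chi_N|^2\,\frac{\omega_g^n}{\nu}\le \tfrac{4}{N}\int_X|\tilde\Delta\phi|\,\frac{\omega_g^n}{\nu}\to 0$; combined with your Cauchy--Schwarz step this finishes the proof without needing any bound on $\gamma_N''$.
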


%\begin{proof}
%  Take $\phi \geq 0$. Let $X_a$ be the set $X_a=\{x\in X|\phi(x)\leq a\}$ and $Y_a$ be the boundary $\{y\in X|\phi(y)=a\}$. Clearly (\ref{bformula})  means that
%  \begin{equation*}
%    \int_{X_a}|d\phi|^2\frac{\omega^n_g}{\nu}=\frac{1}{2}\int_{X_a}(a-\phi)\tilde{\Delta}\phi\frac{\omega^n_g}{\nu}\leq Ca.
%  \end{equation*}
%On the other hand,
%\begin{equation*}
%  \frac{d}{da}\int_{X_a}|d\phi|^2=\int_{Y_a}|d\phi|.
%\end{equation*}
%So
%\begin{align}
%  \int_0^A\int_{Y_a}|d\phi|dt&=\int_0^A\frac{d}{da}\int_{X_a}|d\phi|^2da\notag\\
%  &=\int_{X_A}|d\phi|^2-\int_{X_0}|d\phi|^2\notag\\
%  &=\int_{X_A}|d\phi|^2\leq CA.\notag
%\end{align}
%Note that Stokes' formula for the form $\eta$ is
%\begin{equation*}
%  \int_{X_a}d\eta=\int_{Y_a}\eta,
%\end{equation*}
%and the $L^2$ condition is
%\begin{equation*}
%  \int_0^A\int_{Y_a}\frac{|\eta|^2}{|d\phi|}da=\int_{X_A}|\eta|^2 \leq C.
%\end{equation*}
%From H\"older inequality, it immediately can be seen that
%\begin{align}
%  \int_0^A\left|\int_{X_a}d\eta\right| &\leq \int_0^A\int_{Y_a}|\eta|da\notag\\
%  &\leq\left(\int_{X_A}|\eta|^2\right)^{\frac{1}{2}}\left(|d\phi|^2\right)^{\frac{1}{2}}\notag\\
%  &=\left(\int_0^A\int_{Y_a}\frac{|\eta|^2}{|d\phi|}\right)^{\frac{1}{2}}\left(\int_{0}^A\int_{Y_a}|d\phi|\right)^{\frac{1}{2}}\notag\\
%  &\leq CA^{\frac{1}{2}}.\notag
%\end{align}
%Therefore there is a subsequence $a_{i}\rightarrow +\infty$, such that
%\begin{equation*}
%  \int_{X_{a_i}}d\eta\rightarrow 0.
%\end{equation*}
%So if $d\eta $ is integrable, then $\int_{X}d\eta=0$.
%\end{proof}

\begin{prop}
  Let $(E,\nabla,\theta)$ be a flat Higgs bundle with a fixed Hermitian metric $H_0$ over a special affine Gauduchon manifold $(M,g,D,\nu)$. Let $H$ be a Hermitian metric on $E$ and $s:=\log(H_{0}^{-1}H)$. If one of the following three conditions is satisfied:
  \begin{enumerate}
    \item[(1)]Suppose that $M$ is a compact manifold without boundary.
    \item[(2)]Suppose that $M$ is a compact manifold with non-empty smooth boundary $\partial M$, and $H|_{\partial M}=H_{0}|_{\partial M}$.
    \item[(3)]Suppose that $M$ is a non-compact manifold admitting an exhaustion function $\phi$ with  $\int_{M}|\tilde{\Delta}\phi|\frac{\omega^n_g}{\nu}<+\infty$. Furthermore, we also assume that $|\frac{\partial \omega^{n-1}_g}{\nu}|_g\in L^2(M)$,$s\in L^{\infty}(M)$, and $D_{H_0,\theta}^{1,0}s\in L^2(M)$, where $D_{H_0,\theta}^{1,0}=\partial_{H_0}+\theta^{*{H_0}}$.
  \end{enumerate}
  Then, we have the following identity:
  \begin{equation}\label{kiiProp}
  \int_{M}{\rm tr}\left(\Phi(H_0,\theta)s\right)\frac{\omega^n_g}{\nu}+\int_{M}\langle\Psi(s)(D''s),D''s\rangle_{H_0}\frac{\omega^n_g}{\nu}=\int_{M}{\rm tr}\left(\Phi(H,\theta)s\right)\frac
  {\omega^n_g}{\nu},
  \end{equation}
where $D''=\bar{\partial}_E+\theta$ and $\Psi$ is the function defined by
\begin{equation*}
\Psi(x,y)=\left\{\begin{split}
  &\frac{e^{y-x}-1}{y-x},\ \ &x\neq y,\\
  &1,\ \ &x=y.
\end{split}\right.
\end{equation*}
\end{prop}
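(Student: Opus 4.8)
The plan is to establish the identity (\ref{kiiProp}) first as a pointwise/integrated differential identity valid on any coordinate patch, and then to justify the vanishing of the boundary terms under each of the three hypotheses. Concretely, I would start by differentiating along the segment $H_t = H_0 e^{ts}$, $t\in[0,1]$, with $s=\log(H_0^{-1}H)$ fixed. Writing $\Phi(H_t,\theta)={\rm tr}_g(F_{H_t}+[\theta,\theta^{*H_t}])-\lambda\cdot{\rm Id}_E$, a standard computation (using $\partial_{H_t}-\partial_{H_0}=h_t^{-1}\partial_{H_0}h_t$ with $h_t=H_0^{-1}H_t$, the identity $F_{H_t}-F_{H_0}=\bar\partial_E(h_t^{-1}\partial_{H_0}h_t)$, and $\theta^{*H_t}=h_t^{-1}\theta^{*H_0}h_t$, exactly as in Proposition \ref{PROP3_1}) gives
\begin{equation*}
\frac{d}{dt}{\rm tr}\big(\Phi(H_t,\theta)s\big)={\rm tr}\Big(s\,{\rm tr}_g\big(\bar\partial_E\partial_{H_t}(\dot h_t h_t^{-1})+[\theta,[\theta^{*H_t},\dot h_t h_t^{-1}]]\big)\Big),
\end{equation*}
and then integrating in $t$ from $0$ to $1$ and rearranging the $\bar\partial_E\partial$ term by parts (contracting with $\omega_g^{n-1}$ and using ${\rm tr}_g(F\cdot)\,\omega_g^n = nF\wedge\omega_g^{n-1}$) produces, up to an exact form $d\eta$, the difference $\int_M{\rm tr}(\Phi(H,\theta)s) - \int_M{\rm tr}(\Phi(H_0,\theta)s)$ on one side and $\int_M\langle\Psi(s)(D''s),D''s\rangle_{H_0}$ on the other; the function $\Psi$ arises precisely because $\int_0^1$ of the relevant exponential conjugation factors equals $\frac{e^{y-x}-1}{y-x}$ on the eigenvalue decomposition of $s$. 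This is the algebraic heart of the argument and, while the bookkeeping of traces and the Higgs-field commutator terms is somewhat involved, it is essentially the computation already carried out in the compact Higgs case (cf.\ Simpson \cite{Sim88}, Biswas--Loftin--Stemmler \cite{BLS13}, and our own Proposition \ref{PROP3_1}), so I would present it compactly and cite those for the routine parts.

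Once the identity is reduced to showing $\int_M d\eta = 0$ for the explicit $(n-1)$-form $\eta$ that appears — schematically $\eta$ is built from $s$, $D''s$, $D'_{H_0}s$, and $\omega_g^{n-1}$, together with a term involving $\partial\omega_g^{n-1}$ coming from the fact that $g$ is only affine Gauduchon rather than affine K\"ahler — the three cases are handled as follows. In case (1), $M$ is closed and $\int_M d\eta=0$ by Stokes. In case (2), the boundary condition $H|_{\partial M}=H_0|_{\partial M}$ forces $s|_{\partial M}=0$, hence $\eta|_{\partial M}=0$ (every term of $\eta$ carries at least one factor of $s$), and again $\int_M d\eta = 0$ by Stokes for manifolds with boundary. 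Case (3) is where the noncompactness enters: I would invoke Lemma \ref{SFLem}, which says that if $\phi$ is an exhaustion function with $\int_M|\tilde\Delta\phi|\frac{\omega_g^n}{\nu}<\infty$ and $\eta$ is an $(n-1)$-form with $\int_M|\eta|^2\frac{\omega_g^n}{\nu}<\infty$ and $d\eta$ integrable, then $\int_M d\eta=0$. So the task in case (3) is the bookkeeping verification that the $L^2$ and $L^1$ integrability hypotheses of Lemma \ref{SFLem} are met: I would check that $\|\eta\|_{L^2}<\infty$ using $s\in L^\infty(M)$, $D_{H_0,\theta}^{1,0}s\in L^2(M)$ (which controls $\partial_{H_0}s$ and $\theta^{*H_0}$-type terms), and $|\tfrac{\partial\omega_g^{n-1}}{\nu}|_g\in L^2(M)$ to dominate the Gauduchon correction term; and that $d\eta$ is integrable because $d\eta$ equals (a constant times) the integrand on the left of (\ref{kiiProp}) minus that on the right, both of which are in $L^1$ by the $L^\infty$ bound on $s$, the $L^2$ bound on $D''s$, and the finiteness of $\int_M|\Phi(H_0,\theta)|\frac{\omega_g^n}{\nu}$ that follows from the hypotheses together with finite volume.

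The main obstacle, as I see it, is the careful treatment in case (3) of the extra terms produced by $\partial\omega_g^{n-1}\neq 0$: in the affine K\"ahler situation one can integrate by parts cleanly, but here every integration by parts spins off a term in which $D''s$ (or $D'_{H_0}s$) is wedged against $\partial\omega_g^{n-1}$ rather than $\omega_g^{n-1}$, and one must verify that such terms are genuinely $L^2$ (via Cauchy--Schwarz, pairing the $L^2$ bound on $D''s$ against the assumed $L^2$ bound on $|\tfrac{\partial\omega_g^{n-1}}{\nu}|_g$) and that they assemble into an honest exact form $d\eta$ with $\eta\in L^2$ rather than leaving a leftover non-exact piece. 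A secondary, more routine obstacle is making precise the spectral-calculus step that identifies the $t$-integral of the conjugation operator with $\Psi(s)$ acting on $D''s$; this is standard (one diagonalizes $s$ and computes $\int_0^1 e^{t(y-x)}\,dt$ on each eigen-block) and I would state it as a lemma or simply refer to the analogous computation in the compact case. Modulo these points, the proof is a direct adaptation of the compact argument with Lemma \ref{SFLem} substituting for Stokes' theorem in the noncompact case.
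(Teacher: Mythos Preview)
Your proposal is correct and will yield the identity; the difference from the paper's proof is in how the function $\Psi$ enters. You propose to differentiate along the path $H_t=H_0e^{ts}$ and recover $\Psi(x,y)=\int_0^1 e^{t(y-x)}\,dt$ by integrating in $t$; this is self-contained and makes the origin of $\Psi$ transparent. The paper instead works directly with the endpoint difference
\[
\Phi(H,\theta)-\Phi(H_0,\theta)={\rm tr}_g\big(\bar\partial(h^{-1}\partial_{H_0}h)+[\theta,h^{-1}[\theta^{*H_0},h]]\big),
\]
performs a single spatial integration by parts (using the trace identity ${\rm tr}(h^{-1}(\partial_{H_0}h)s)={\rm tr}(s\,\partial_{H_0}s)$ and the Gauduchon condition $\partial\bar\partial\omega_g^{n-1}=0$ to organize the exact terms), and then invokes the pointwise algebraic identity ${\rm tr}\big({\rm tr}_g(h^{-1}D_{H_0,\theta}^{1,0}h\,D''s)\big)=\langle\Psi(s)(D''s),D''s\rangle_{H_0}$ from \cite{NieZhang} as a black box. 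Your $t$-integration is effectively a derivation of that black box, so the two routes converge; the paper's is shorter if one is willing to cite, yours is more explanatory. The treatment of the three cases is exactly as you describe: Stokes in (1), $s|_{\partial M}=0$ in (2), and Lemma~\ref{SFLem} in (3). In the paper the two boundary $(n-1)$-forms are explicitly $\tfrac{1}{2}{\rm tr}(s^2)\,\bar\partial\omega_g^{n-1}/\nu$ and ${\rm tr}(s\,D_{H_0,\theta}^{1,0}s)\wedge\omega_g^{n-1}/\nu$, which explains precisely why the hypotheses in (3) are $s\in L^\infty$, $D_{H_0,\theta}^{1,0}s\in L^2$, and $|\partial\omega_g^{n-1}/\nu|_g\in L^2$ --- so your anticipated ``main obstacle'' (the Gauduchon correction terms) is resolved by the first of these two forms, and no further terms involving $D''s$ wedged against $\partial\omega_g^{n-1}$ appear once the trace trick is used.
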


\begin{proof}
  Set $h=H_{0}^{-1}H=e^s$. Obviously the definition tells us that
  \begin{equation}
      {\rm tr}((\Phi(H,\theta)-\Phi(H_0,\theta))s)=\langle{\rm tr}_g((h^{-1}\partial_{H_0}h)+[\theta,h^{-1}[\theta^{*H_0},h]]),s\rangle_{H_0}.
  \end{equation}
Based on ${\rm tr}(h^{-1}(\partial_{H_0}hs))={\rm tr}(s\partial_{H_0}s)$, ${\rm tr}(s[\theta^{*H_0},s])=0$ and $\partial \bar{\partial}\omega^{n-1}_g=0$, we derive
  \begin{equation}\label{BF}
  \begin{split}
    &\quad \int_M\langle{\rm tr}_g\big(\bar{\partial}(h^{-1}\partial_{H_0}h)\big),s\rangle_{H_0}\frac{\omega^n_g}{\nu}\\
    &=n\int_M\frac{{\rm tr}(\bar{\partial}(h^{-1}\partial_{H_0}h)s)\wedge\omega^{n-1}_g}{\nu}\\
    &=n\int_M\frac{\bar{\partial}{\rm tr}\left(h^{-1}(\partial_{H_0}h)s\right)\wedge\omega^{n-1}_g}{\nu}+n\int_M\frac{{\rm tr}(h^{-1}\partial_{H_0}h\wedge\bar{\partial}s)\wedge \omega^{n-1}_g}{\nu}\\
    &=n\int_M\frac{{\rm tr}(s\partial_{H_0}s)\wedge \bar{\partial}\omega^{n-1}_g}{\nu}+n\int_M\frac{\bar{\partial}\left({\rm tr}(s\partial_{H_0}s)\wedge\omega^{n-1}_g\right)}{\nu}\\
    &\quad+n\int_M\frac{{\rm tr}(h^{-1}\partial_{H_0}h\wedge\bar{\partial}s)\wedge \omega^{n-1}_g}{\nu}\\
    &=n\int_M\frac{\partial\left(\frac{1}{2}{\rm tr}(s^2)\wedge \bar{\partial}\omega^{n-1}_g\right)}{\nu}+n\int_M\frac{\bar{\partial}\left({\rm tr}(sD_{H_0,\theta}^{1,0}s)\wedge \omega^{n-1}_g\right)}{\nu}\\
    &\quad+n\int_M\frac{{\rm tr}(h^{-1}\partial_{H_0}h\wedge \bar{\partial}s)\wedge \omega^{n-1}_g}{\nu}\\
    &=n\int_Md\left(\frac{\frac{1}{2}{\rm tr}(s^2)\wedge{\bar{\partial}}\omega^{n-1}_g}{2\nu}\right)+(-1)^{n-1}\int_Md\left(\frac{{\rm tr}(sD_{H_0,\theta}^{1,0}s)\wedge \omega^{n-1}_g}{2\nu}\right)\\
    &\quad + n\int_M\frac{{\rm tr}(h^{-1}\partial_{H_0}h\wedge\bar{\partial}s)\wedge \omega^{n-1}_g}{\nu}.
  \end{split}
  \end{equation}
  In condition (1), by using Stokes' formula; in condition (2), by using $s|_{\partial M}=0$ and Stokes' formula; in condition (3), by using lemma \ref{SFLem}, we have
  \begin{equation}\label{IMTgB}
    \int_M\langle{\rm tr}_g\big(\bar{\partial}(h^{-1}\partial_{H_0}h)\big),s\rangle_{H_0}\frac{\omega^n_g}{\nu}=\int_M\frac{{\rm tr}(h^{-1}\partial_{H_0}h\wedge \bar{\partial}s)\wedge \omega^{n-1}_g}{\nu}.
  \end{equation}
 According to\protect{ \cite[p.635]{NieZhang}}, we can also prove
 \begin{equation}\label{TTghI}
   {\rm tr}\left({\rm tr}_g(h^{-1}D_{H_{0},\theta}^{1,0}hD''s)\right)=\langle\Psi(s)(D''s),D''s\rangle_{H_0}
 \end{equation}
  and
  \begin{equation}\label{IMTTgT}
    \int_{M}{\rm tr}\left({\rm tr}_g[\theta,h^{-1}[\theta^{*H_0},h]]s\right)\frac{\omega^n_g}{\nu}=\int_M\frac{{\rm tr}\left(h^{-1}[\theta^{*H_0},h][\theta,s]\right)\wedge\omega^{n-1}_g}{\nu}.
  \end{equation}
 From above, there holds that
  \begin{equation}
    \int_M\langle{\rm tr}_g\left(\bar{\partial}(h^{-1}\partial_{H_0}h)+[\theta,h^{-1}[\theta^{*H_0},h]]\right),s\rangle_{H_0}\frac{\omega^n_g}{\nu}
  =\int_M\langle\Psi(s)(D''s),D''s\rangle_{H_0}\frac{\omega^n_g}{\nu}.
  \end{equation}
  So (\ref{kiiProp}) immediately follows.
\end{proof}

\vspace{5mm}
Next, we will consider the long-time existence of the affine Hermitian-Yang-Mills flow (\ref{AHYMF2}) on some non-compact affine Gauduchon manifold $(X,D,g)$ under some conditions of the initial metric $H_0$. In the following, we suppose that $(X,D,g)$ satisfies the Assumption $2$.

  Let $\{X_{\varphi}\}_{\varphi=1}^{\infty}$ be an exhaustion sequence of compact sub-domains of $X$, i.e., they satisfy
  $X_{\varphi}\subset X_{\varphi+1}$ and $\bigcup\limits_{\varphi=1}^{\infty}X_{\varphi}=X$. We consider the  Dirichlet boundary condition:
  \begin{equation}\label{Dirbc}
  H|_{\partial X_{\varphi}}=H_0|_{\partial X_{\varphi}}.
  \end{equation}
  By Theorem \ref{LTSOC}, on every $X_{\varphi}$, there exists a unique long-time solution $H_{\varphi}(t)$ of the following Dirichlet boundary problem:
  \begin{equation}\label{XFAHYMF}
  \left\{\begin{split}
       &H_{\varphi}^{-1}(t)\frac{\partial H_{\varphi}(t)}{\partial t}=-4\left({\rm tr}_g(F_{H_{\varphi}(t)}+[\theta,\theta^{*H_{\varphi}(t)}])-\lambda\cdot {\rm Id}_{E}\right),\\
       &H_{\varphi}(0)=H_0,\\
       &H_{\varphi}(t)|_{\partial X_{\varphi}}=H_{0}|_{\partial X_{\varphi}}.
    \end{split}\right.
  \end{equation}
  For simplicity, set $\Phi(H_{\varphi}(t),\theta)={\rm tr}_g(F_{H_{\varphi}(t)}+[\theta,\theta^{*H_{\varphi}(t)}])-\lambda\cdot {\rm Id}_E$. Suppose that there exists a positive number $C_0$ such that $|\Phi(H_0,\theta)|_{H_0}\leq C_0$ on $X$. Denote $h_{\varphi}(t)=H_0^{-1}H_{\varphi}(t)$, then a direct calculation shows that
  \begin{equation}\label{PPTOLTHF}
|\frac{\partial}{\partial t}\log{\rm tr}h_{\varphi}(t)|\leq 4|\Phi(H_{\varphi}(t),\theta)|_{H_{\varphi}(t)},
  \end{equation}
  and
  \begin{equation}\label{PPTOLTHFI}
 |\frac{\partial}{\partial t}\log{\rm tr}h_{\varphi}^{-1}(t)|\leq 4|\Phi(H_{\varphi}(t),\theta)|_{H_{\varphi}(t)}.
 \end{equation}
 Combining Proposition \ref{PROP3_1} and the maximum principle, one can see
  \begin{equation}
  \sup\limits_{X_{\varphi}\times[0,+\infty)}|\Phi(H_{\varphi},\theta)|_{H_{\varphi}}\leq C_0.
  \end{equation}
  Integrating (\ref{PPTOLTHF}) along the time direction from $0$ to $t$, we get
  \begin{equation}
  |\log {\rm tr}h_{\varphi}(t)-\log r|=|\int_0^t\frac{\partial}{\partial s}(\log{\rm tr}h_{\varphi}(s))ds|\leq 4C_0t.
  \end{equation}
  Then we have
  \begin{equation}
  \sup\limits_{X_{\varphi}\times[0,T]}{\rm tr}h_{\varphi}(t)\leq re^{4C_0T},\quad \inf\limits_{X_{\varphi}\times[0,T]}{\rm tr}h_{\varphi}(t)\geq re^{-4C_0T},
  \end{equation}
  and
  \begin{equation}
  \sup\limits_{X_{\varphi}\times[0,T]}{\rm tr}h_{\varphi}^{-1}(t)\leq re^{4C_0T},\quad \inf\limits_{X_{\varphi}\times[0,T]}h_{\varphi}^{-1}(t)\geq re^{-4C_0T}.
  \end{equation}
  This implies that
  \begin{equation}\label{FTC_0}
    \sup\limits_{X_{\varphi}\times[0,T]}\sigma(H_0,H_{\varphi}(t))\leq 2r\left(e^{4C_0T}-1\right).
  \end{equation}
  \begin{lem}\label{Lem4_4}
    {\rm (}\protect {\cite[Lemma 6.7]{Sim88}}, \protect {\cite[Lemma 3.3]{ZZZ}}{\rm )} Suppose $u$ is a function on  $X_{\varphi}\times[0,T]$ satisfying
    \begin{equation}\label{HKOu}
    (\frac{\partial}{\partial t}-\tilde{\Delta})u\leq 0,\quad u|_{t=0}=0,
    \end{equation}
    and  there is a bound $\sup\limits_{X_{\varphi}}u\leq C_1$. Then we have
    \begin{equation}
    u(x,t)\leq\frac{C_1}{\varphi}(\varphi(x)+C_2t),
    \end{equation}
    where $C_2$ is the bound of $\tilde{\Delta}\phi$ in Assumption 2.
  \end{lem}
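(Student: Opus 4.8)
The plan is to establish the estimate by a barrier (comparison) argument together with the parabolic maximum principle for the operator $\frac{\partial}{\partial t}-\tilde{\Delta}$ on the compact space–time cylinder $\overline{X_{\varphi}}\times[0,T]$. The barrier will be the explicit function
\[
v(x,t):=\frac{C_1}{\varphi}\bigl(\phi(x)+C_2t\bigr),
\]
manufactured from the exhaustion function $\phi$ of Assumption 2; here $C_2$ is the stated bound for $\tilde{\Delta}\phi=4\,{\rm tr}_g\partial\bar{\partial}\phi$, and we use that $\phi\geq 0$ on $X$ (non-negativity of the exhaustion function) together with $\phi\geq\varphi$ on $\partial X_{\varphi}$ (the $X_{\varphi}$ being taken as sublevel sets of $\phi$, so that $\phi$ is at least $\varphi$ on their boundaries). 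Without loss of generality $C_1>0$.

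First I would verify that $v$ is a supersolution of the heat operator: a one-line computation gives
\[
\Bigl(\frac{\partial}{\partial t}-\tilde{\Delta}\Bigr)v=\frac{C_1}{\varphi}\bigl(C_2-\tilde{\Delta}\phi\bigr)\geq 0,
\]
since $\tilde{\Delta}\phi\leq|\tilde{\Delta}\phi|\leq C_2$. Next I would check the comparison on the parabolic boundary. At the initial time, $u(x,0)=0\leq\frac{C_1}{\varphi}\phi(x)=v(x,0)$ because $\phi\geq 0$. On the lateral boundary $\partial X_{\varphi}\times[0,T]$ one has $v(x,t)\geq\frac{C_1}{\varphi}\cdot\varphi=C_1\geq\sup_{X_{\varphi}}u\geq u(x,t)$, using $\phi\geq\varphi$ there together with $C_2t\geq 0$ and the given $C^0$-bound on $u$.

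Then I would apply the maximum principle to $w:=u-v$ on $\overline{X_{\varphi}}\times[0,T]$. From the hypothesis $(\frac{\partial}{\partial t}-\tilde{\Delta})u\leq 0$ and the supersolution property of $v$ we get $(\frac{\partial}{\partial t}-\tilde{\Delta})w\leq 0$ in the interior, while the previous step gives $w\leq 0$ on the parabolic boundary $(X_{\varphi}\times\{0\})\cup(\partial X_{\varphi}\times[0,T])$. Since $\overline{X_{\varphi}}$ is compact and $\tilde{\Delta}=g^{ij}\partial_i\partial_j$ is uniformly elliptic there with no zeroth-order term, the weak parabolic maximum principle forces $w\leq 0$ throughout $\overline{X_{\varphi}}\times[0,T]$, i.e. $u(x,t)\leq\frac{C_1}{\varphi}(\phi(x)+C_2t)$, which is the asserted bound.

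I do not expect a genuine obstacle here; the content of the lemma is the choice of barrier rather than any hard analysis. The step that needs attention — and that explains the role of Assumption 2 — is precisely the construction of $v$: one needs an exhaustion function that is simultaneously non-negative (so it dominates the vanishing initial data), has $\tilde{\Delta}$ bounded above (so the linear-in-$t$ correction keeps $v$ a supersolution), and grows at least like the index $\varphi$ on $\partial X_{\varphi}$ (so that on the lateral boundary $v$ overtakes the a priori constant $C_1$). Once such a $v$ is in hand, the remainder is the textbook comparison principle.
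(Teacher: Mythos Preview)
The paper does not actually supply a proof of this lemma; it merely cites \cite[Lemma~6.7]{Sim88} and \cite[Lemma~3.3]{ZZZ}. Your barrier/comparison argument with $v(x,t)=\frac{C_1}{\varphi}\bigl(\phi(x)+C_2t\bigr)$ followed by the weak parabolic maximum principle is precisely the standard proof from those references and is correct; you have also correctly identified the implicit convention (used throughout the paper, cf.\ (\ref{bformula})) that the $X_{\varphi}$ are taken as sublevel sets $\{\phi\le\varphi\}$ of the exhaustion function, which is what makes the lateral-boundary comparison $\phi\ge\varphi$ on $\partial X_{\varphi}$ work.
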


  In the following, we assume that there exists a constant $C_0$ such that
  \begin{equation}\label{SUPP}
    \sup\limits_{X}|\Phi(H_0,\theta)|_{H_0} \leq C_0.
  \end{equation}

 {\bf Step1:} We want to get the $C^0$-convergence of $H_{\varphi}$ on any compact set $\Omega$ in finite time. For any compact subset $\Omega\subset X$, there exists a constant $\varphi_0$ such that $\Omega\subset X_{\varphi_0}$. Let $H_{\varphi}(t)$ and $H_{\psi}(t)$ be the long-time solutions of the flow (\ref{XFAHYMF}) on $X_{\varphi}$ and $X_{\psi}$  for $\varphi_0<\varphi<\psi$. Let $u=\sigma(H_{\varphi},H_{\psi})$. Of course(\ref{FTC_0}) gives a uniform bound of $u$ and $u$ satisfies (\ref{HKOu}). By Lemma \ref{Lem4_4}, we have
 \begin{equation*}
  \sigma(H_{\varphi},H_{\psi})\leq \frac{C_1(\varphi_0+C_2T)}{\varphi_0}
  \end{equation*}
  on $X_{\varphi_0}\times[0,T].$ So $H_{\varphi} $ is a Cauchy sequence on $X_{\varphi_0}\times[0,T]$ for $\varphi\rightarrow +\infty$.

  {\bf Step2:} Get the local $C^{\infty}$-convergence of $H_{\varphi}$ on any compact subset $\Omega$. Clearly (\ref{FTC_0}) and Proposition \ref{LC_1} give the uniform $C^0$ and local $C^1$ estimate of $H_{\varphi}$. One can get the local uniform $C^{\infty}$-estimate of $H_{\varphi}$ by the standard Schauder estimate of the parabolic equation. It should be pointed out that by applying the parabolic Schauder estimate, one can only get the uniform $C^{\infty}$-estimate of $h(t)$ on $X_{\varphi}\times[\tau,T]$, where $\tau>0$ and the uniform estimate depends on $\tau^{-1}$. To fix this, one can use the maximum principle to get the local uniform bound on $|F_H|_H$, then apply the elliptic estimates to get the local uniform $C^{\infty}$-estimates. We will omit here, because it is similar to \protect \cite[Lemma 2.5]{LZZ17}. By choosing a subsequence $\varphi\rightarrow +\infty$, we know that $H_{\varphi}(t)$ converge in $C_{\rm loc}^{\infty}$-topology to a long-time solution $H(t)$ of the heat flow (\ref{AHYMF2}) on $X$.

  Let $(X,D,g,\nu)$ be a non-compact affine Gauduchon manifold satisfying the Assumptions 1 and 2, $\{X_{\varphi}\}$ an exhausting sequence of compact sub-domains of $X$.
  By the definition, it is easy to check
\begin{equation}
  |D''s|_{H_0}^2\leq \tilde{C}\langle\Psi(s)(D''s),D''s\rangle_{H_0},
\end{equation}
where $\tilde{C}$ is a positive constant depending only on the $L^{\infty}$-bound of $s$. By the identity (\ref{kiiProp}) in the compact and non-empty boundary case and the flow equation (\ref{AHYMF}), we have
\begin{equation}\label{Dlog}
\begin{split}
  \int_{X_{\varphi}}|D''s|_{H_0}^2\frac{\omega^n_g}{\nu} &\leq \tilde{C}\int_M\langle\Psi(s)(D''s),D''s\rangle_{H_0}\frac{\omega_g^n}{\nu}\\
  &=\tilde{C}\int_{X_{\varphi}}(-{\rm tr}(\Phi(H_0,\theta)s)+{\rm tr}(\Phi(H,\theta)s))\frac{\omega_g^n}{\nu}\\
  &=\tilde{C}(\int_{X_{\varphi}}(-{\rm tr}(\Phi(H_0,\theta)s))\frac{\omega_g^n}{\nu}-\frac{1}{8}\frac{d}{dt}||s||_{L^2}^2)\\
  &\leq \tilde{C}\sup\limits_{X_{\varphi}}|\Phi(H_0,\theta)|_{H_0}^2\cdot {\rm Vol}(X_{\varphi},g)\\
  &\leq C(\Phi(H_0,\theta),{\rm Vol}(X)).
\end{split}
\end{equation}
  Since the right hand side of (\ref{Dlog}) is independent of $\varphi$, we have
  \begin{equation}
      ||D''(\log(H_0^{-1}H))||_{L^2}\leq C(\Phi(H_0,\theta),{\rm Vol}(X)).
  \end{equation}
   So we obtain the following theorem.

  \begin{thm}\label{THM4_5}
    Let $(X,D,g)$ be a non-compact affine Gauduchon manifold satisfying the Assumption 1 and 2, $(E,\nabla,\theta)$ be a flat Higgs bundle with the background Hermitian metric $H_0$ over $X$. Suppose that there exists a positive number $C_0$ such that $\sup\limits_{X}|\Phi(H_0,\theta)|_{H_0}\leq C_0<+\infty$, then the affine Hermitian-Yang-Mills flow (\ref{AHYMF2}) has a long-time solution on $X\times[0,+\infty)$. Furthermore, we also have
    \begin{equation}
      ||D''(\log(H_0^{-1}H))||_{L^2}\leq C(\Phi(H_0,\theta),{\rm Vol}(X)).
    \end{equation}
 \end{thm}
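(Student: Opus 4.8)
The plan is to realize the global solution on $X$ as a $C^{\infty}_{\mathrm{loc}}$-limit of solutions of Dirichlet problems on a compact exhaustion, gluing together the compact theory of Section 3 with the uniform estimate of Proposition \ref{LC_1} and the key identity (\ref{kiiProp}). First I would fix an exhaustion $\{X_{\varphi}\}$ of $X$ by compact sub-domains with smooth boundary on which $g$ is non-degenerate and, for each $\varphi$, let $H_{\varphi}(t)$ be the unique long-time solution of the Dirichlet problem (\ref{XFAHYMF}) provided by Theorem \ref{LTSOC}. The first task is a package of a priori estimates uniform in $\varphi$. Since $H_{\varphi}$ has initial data $H_0$ and boundary data $H_0|_{\partial X_{\varphi}}$, the parabolic maximum principle applied to inequality (\ref{HKOMC}) of Proposition \ref{PROP3_1} gives $\sup_{X_{\varphi}\times[0,+\infty)}|\Phi(H_{\varphi}(t),\theta)|_{H_{\varphi}(t)}\le C_0$, and integrating the pointwise bounds (\ref{PPTOLTHF})--(\ref{PPTOLTHFI}) in time yields the $C^0$ control (\ref{FTC_0}), namely $\sup_{X_{\varphi}\times[0,T]}\sigma(H_0,H_{\varphi}(t))\le 2r(e^{4C_0T}-1)$, which depends on $T$ but not on $\varphi$.

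Next I would upgrade this to local higher-order bounds and pass to the limit. Fixing a compact $\Omega\subset X$ with $\Omega\subset X_{\varphi_0}$, the uniform $C^0$ bound feeds into Proposition \ref{LC_1} and produces a uniform local $C^1$ estimate $\sup_{\Omega\times[0,+\infty)}|h_{\varphi}^{-1}\partial_{H_0}h_{\varphi}|_{H_0}\le \bar C_1$; parabolic Schauder theory then bootstraps this to uniform $C^{\infty}_{\mathrm{loc}}$-bounds, the loss of control as $t\to 0^{+}$ being removed by the maximum-principle bound on $|F_{H_{\varphi}}|$ together with elliptic regularity, exactly as in \cite[Lemma 2.5]{LZZ17}. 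For the Cauchy property in $\varphi$, I would compare $H_{\varphi}$ and $H_{\psi}$ for $\varphi_0<\varphi<\psi$: by Proposition \ref{HKOTAHYMM} the function $u=\sigma(H_{\varphi},H_{\psi})$ satisfies $(\partial_t-\tilde\Delta)u\le 0$, $u|_{t=0}=0$, and carries the $\varphi$-independent bound above, so Lemma \ref{Lem4_4} (which uses Assumption 2) forces $u$ to decay like $\varphi^{-1}$ on $\Omega\times[0,T]$. Hence $H_{\varphi}(t)$ is uniformly Cauchy in $C^0$ on compacta, and Arzel\`a--Ascoli plus a diagonal argument over the exhaustion yield a subsequence converging in $C^{\infty}_{\mathrm{loc}}(X\times[0,+\infty))$ to a long-time solution $H(t)$ of (\ref{AHYMF2}).

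Finally, for the $L^2$ bound I would apply the key identity (\ref{kiiProp}) in case (2) to the pair $(H_0,H_{\varphi}(t))$ on the manifold-with-boundary $X_{\varphi}$, using that $s_{\varphi}:=\log(H_0^{-1}H_{\varphi}(t))$ vanishes on $\partial X_{\varphi}$. Since $s_{\varphi}$ is bounded, $|D''s_{\varphi}|_{H_0}^2\le \tilde C\,\langle\Psi(s_{\varphi})(D''s_{\varphi}),D''s_{\varphi}\rangle_{H_0}$, so (\ref{kiiProp}) bounds $\int_{X_{\varphi}}|D''s_{\varphi}|_{H_0}^2\,\omega_g^n/\nu$ by $\tilde C\int_{X_{\varphi}}\big({\rm tr}(\Phi(H_{\varphi},\theta)s_{\varphi})-{\rm tr}(\Phi(H_0,\theta)s_{\varphi})\big)\omega_g^n/\nu$; rewriting ${\rm tr}(\Phi(H_{\varphi},\theta)s_{\varphi})$ through the flow equation as $-\tfrac18\tfrac{d}{dt}|s_{\varphi}|_{H_0}^2$ and estimating the remaining term by $\sup_X|\Phi(H_0,\theta)|_{H_0}\le C_0$ and the finiteness of ${\rm Vol}(X)$ (Assumption 1), the right-hand side is controlled by a constant depending only on $C_0$ and ${\rm Vol}(X)$, uniformly in $\varphi$, and letting $\varphi\to\infty$ gives $\|D''(\log(H_0^{-1}H))\|_{L^2}\le C(\Phi(H_0,\theta),{\rm Vol}(X))$. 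The part I expect to be most delicate is keeping all of these estimates genuinely uniform in $\varphi$ near the parabolic corner $\partial X_{\varphi}\times\{0\}$, and, in the last step, disposing of the flow term $\tfrac{d}{dt}\|s_{\varphi}\|_{L^2}^2$ so that the final constant depends only on the fixed background data $C_0$ and ${\rm Vol}(X)$ and not on $T$.
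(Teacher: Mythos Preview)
Your proposal is correct and follows essentially the same approach as the paper: exhaust $X$ by compact domains, solve the Dirichlet problems (\ref{XFAHYMF}), obtain the uniform $C^0$ bound (\ref{FTC_0}) from Proposition \ref{PROP3_1} via the maximum principle and time integration, use Lemma \ref{Lem4_4} for the Cauchy property and Proposition \ref{LC_1} plus Schauder theory for local $C^\infty$ compactness, and then derive the $L^2$ bound on $D''s$ from the key identity (\ref{kiiProp}) in case (2) combined with the flow equation. The only cosmetic difference is that the paper presents the Cauchy argument (Step 1) before the local $C^1$/$C^\infty$ estimates (Step 2), whereas you interleave them; the two delicate points you flag at the end are precisely the ones the paper addresses (or glosses over) in the same way.
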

 \vspace{3mm}

 Let $X$ be a non-compact  affine Gauduchon manifold satisfying the Assumption 1,2,3, and $(E,\nabla,\theta)$ be a flat Higgs bundle over $X$ with a background Hermitian metric $H_0$ satisfying $\sup\limits_{X}|{\rm tr}_gF_{H_0,\theta}|_{H_0}
<+\infty$.

  Let $\{X_{\varphi}\}_{\varphi=1}^{\infty}$ be an exhaustion sequence of compact sub-domains of $X$. On every $X_{\varphi}$, there is a unique long-time solution $H_{\varphi}(x,t)$ satisfies (\ref{XFAHYMF}).
  Due to Theorem \ref{THM4_5}, we know that $H_{\varphi}(x,t)$ converge to the long-time solution $H(x,t)$ of the affine Hermitian-Yang-Mills flow (\ref{AHYMF2}) on $X\times[0,+\infty)$ in $C_{\rm loc}^{\infty}$-topology as $\varphi\rightarrow +\infty$. We denote by $A_0$ the Chern connection on the flat vector bundle $(E, \nabla)$ with respect to the smooth  Hermitian metric $H_0$ and we sometimes denote $A(t)$ by $D_{A(t)}$. Let $(A_{\varphi}(x,t),\theta_{\varphi}(x,t))$ be the long-time solution of the affine Yang-Mills-Higgs flow on the flat vector bundle $(E, \nabla, H_0)$ over the compact affine Gauduchon manifold $(X_{\varphi},D,g)$, i.e.
 \begin{equation}\label{YMHF}
 \left\{\begin{split}
     &\frac{\partial A(t)}{\partial t}=-2(\partial_{A(t)}{\rm tr}_g-\bar{\partial}_{A(t)}{\rm tr}_g)(F_{A(t)}+[\theta(t),\theta^{*H_0}(t)]),\\
     &\frac{\partial \theta(t)}{\partial t}=-2[{\rm tr}_g(F_{A(t)}+[\theta(t),\theta^{*H_0}(t)]),\theta(t)],\\
     &A(0)=A_0,\ \  \theta(0)=\theta_0.
   \end{split}\right.
 \end{equation}
 Following Donaldson (\cite{Do85}) and Li-Zhang's (\cite{LZ11}) argument and setting $\sigma(t)^{*H_0}\circ \sigma(t)=h(t)=H_{0}^{-1}H(t)$, we can easily check that the affine Yang-Mills-Higgs flow (\ref{YMHF}) is gauge equivalent to the affine Hermitian-Yang-Mills flow (\ref{AHYMF2}) with the initial metric $H_0$. Here $\sigma(t)$ is the gauge transformation, $A(t)=\sigma(t)\circ D_{H(t)}\circ \sigma^{-1}(t)$ and $\theta(t)=\sigma(t)\circ \theta_0\circ \sigma^{-1}(t)$.

It is well known that
  \begin{equation}\label{FHT}
    |F_{H(t)}+[\theta_0,\theta_0^{*H(t)}]|^2_{H(t)}=|F_{A(t)}+[\theta(t),\theta(t)^{*H_0}]|^2_{H_0},
  \end{equation}
  and
  \begin{equation}\label{DFHT}
    \big|D_{H(t)}\big({\rm tr}_g(F_{H(t)}+[\theta_0,\theta_0^{*H(t)}])\big)\big|_{H(t)}=\big|D_{A(t)}\big({\rm tr}_g(F_{A(t)}+[\theta(t),\theta^{*H_0}(t)])\big)\big|_{H_0}.
  \end{equation}
  For simplicity, set
  \begin{equation}
    \Phi(H(t),\theta_0)={\rm tr}_g(F_{H(t)}+[\theta_0,\theta_0^{*H(t)}])
  \end{equation}
  and
  \begin{equation}
    \Phi(A(t),\theta(t))={\rm tr}_g(F_{A(t)}+[\theta(t),\theta^{*H_0}(t)]).
  \end{equation}
 Define
 \begin{equation}
   I(t)=\int_X(|D_{H(t)}\Phi(H(t),\theta_0)|_{H(t)}^2+2|[\Phi(H(t),\theta_0),\theta_0]|^2_{H(t)})\frac{\omega^n_g}{\nu}.
 \end{equation}
 Then using (\ref{FHT}) and (\ref{DFHT}) we can easily check that
  \begin{equation}
    I(t)=\int_X(|D_{A(t)}\Phi(A(t),\theta(t))|_{H_0}^2+2|[\Phi(A(t),\theta(t)),\theta(t)]|^2_{H_0})\frac{\omega^n_g}{\nu}.
  \end{equation}
 Now we want to prove that $I(t)\rightarrow 0$ as $t\rightarrow +\infty$ by using the exhaustion method. Set
\begin{equation}
I_{\varphi}(t)=\int_{X_{\varphi}}(|D_{A_{\varphi}(t)}\Phi(A_{\varphi}(t),\theta_{\varphi}(t))|_{H_0}^2+
2|[\Phi(A_{\varphi}(t),\theta_{\varphi}(t)),\theta_{\varphi}(t)]|^2_{H_0})\frac{\omega^n_g}{\nu}.
  \end{equation}
According to the  uniform bound of $\sup \limits_{X} |\Phi(H_0,\theta_0)|_{H_0}<+\infty$ and the maximum principle, we know that there exists a uniform constant $C_0$ such that
\begin{equation}
\sup\limits_{X_{\varphi}}|\Phi(A_{\varphi}(t),\theta_{\varphi}(t))|_{H_0}=\sup\limits_{X_{\varphi}}|\Phi(H_{\varphi}(t),\theta_0)|_{H_{\varphi}(t)}<\sup
  \limits_{X_{\varphi}}|\Phi(H_0,\theta_0)|_{H_0}\leq C_0.
\end{equation}
  \begin{lem}\label{Lem4_6}
    Suppose $X_{\varphi}$ is a special compact affine Gauduchon manifold with boundary $\partial X_{\varphi}$ and $||d\omega_g||_{L^{\infty}(X)}<+\infty $. If $A_{\varphi}(t)$ is a smooth solution of (\ref{YMHF}), then it holds that
    \begin{equation}\label{DILI}
      \frac{dI_{\varphi}(t)}{dt}\leq \hat{C}I_{\varphi}(t) \quad\text{for}\quad t>t_0>0,
    \end{equation}
    where $\hat{C}=\hat{C}(n,r,C_0)$ is a uniform constant independent of $t$ and $\varphi$.
  \end{lem}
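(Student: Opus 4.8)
The plan is to differentiate $I_\varphi(t)$ in time and use the evolution equations \eqref{YMHF} together with the standard Bochner--Weitzenb\"ock and commutation formulas for the affine Hitchin--Simpson connection. Writing $\Phi_\varphi(t)=\Phi(A_\varphi(t),\theta_\varphi(t))$ for brevity, the first step is to record the evolution of $\Phi_\varphi$ itself: from \eqref{YMHF} one gets a heat-type equation of the form $\frac{\partial}{\partial t}\Phi_\varphi = \tilde\Delta_{A_\varphi}\Phi_\varphi + (\text{zero-order terms quadratic in }\Phi_\varphi,\theta_\varphi)$, which is the affine analogue of the well-known identity $(\frac{\partial}{\partial t}-\Delta)\Phi=0$ in the K\"ahler Hermitian--Yang--Mills case, but now with correction terms coming from $d\omega_g\neq 0$ and from the Higgs field. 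This is exactly the place where the hypothesis $\|d\omega_g\|_{L^\infty(X)}<+\infty$ enters, since those correction terms are controlled by $|d\omega_g|_g$ and by $\sup|\Phi_\varphi|\le C_0$.

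Next I would compute $\frac{d}{dt}I_\varphi(t)$ by bringing the $t$-derivative inside the integral (legitimate since everything is smooth on $\overline{X_\varphi}\times[t_0,T]$ and $X_\varphi$ is compact). Differentiating $|D_{A_\varphi}\Phi_\varphi|^2$ and $|[\Phi_\varphi,\theta_\varphi]|^2$ produces, after integration by parts over $X_\varphi$, a "good" negative term of the shape $-\int_{X_\varphi}|\nabla_{A_\varphi}D_{A_\varphi}\Phi_\varphi|^2$ (or its analogue with the bracket), plus zero-order terms in which the integrand is bounded pointwise by $\hat C(n,r,C_0)\big(|D_{A_\varphi}\Phi_\varphi|^2+|[\Phi_\varphi,\theta_\varphi]|^2\big)$. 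The boundary terms from integration by parts over $\overline{X_\varphi}$ vanish because the Dirichlet boundary condition $H_\varphi(t)|_{\partial X_\varphi}=H_0|_{\partial X_\varphi}$ forces $\Phi_\varphi$ and hence $D_{A_\varphi}\Phi_\varphi$ to vanish on $\partial X_\varphi$ for $t>0$ (exactly as used in the Dirichlet-problem argument in Section 3). Dropping the good negative term yields $\frac{d}{dt}I_\varphi(t)\le \hat C\, I_\varphi(t)$ as claimed.

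The main obstacle, and the step that requires care rather than routine computation, is bookkeeping the first-order terms that arise because $g$ is only affine Gauduchon: when integrating by parts one cannot commute $\tilde\Delta$ with contraction by $\omega_g$ freely, and the commutators pick up $\partial\omega_g^{n-1}$, $\bar\partial\omega_g^{n-1}$ factors. One must show that after one integration by parts these extra terms, together with the $\nabla J$-type terms coming from the affine complex structure, are either absorbed into the good negative Bochner term by Young's inequality, or else are pointwise dominated by a constant (depending only on $n$, $r$, $C_0$ and $\|d\omega_g\|_{L^\infty}$) times the integrand of $I_\varphi(t)$. The restriction to $t>t_0>0$ is used precisely so that $|\Phi_\varphi|_{H_{\varphi}}$, $|D_{A_\varphi}\Phi_\varphi|$ and the curvature $|F_{A_\varphi}|$ are already under uniform control (via Proposition \ref{PROP3_1}, the maximum principle, and the parabolic Schauder estimates of Lemma \ref{Lem3_7}), so that all the zero-order coefficients appearing in the differential inequality are genuinely bounded independently of $t$ and $\varphi$.
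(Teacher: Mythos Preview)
Your overall strategy---differentiate $I_\varphi$, integrate by parts, exhibit a good negative term, and absorb the remainder---matches the paper. But two of your concrete claims are wrong, and one of them undermines the whole point of the lemma.

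\textbf{Boundary terms.} You assert that the Dirichlet condition forces $D_{A_\varphi}\Phi_\varphi$ to vanish on $\partial X_\varphi$. This is false: knowing $\Phi_\varphi|_{\partial X_\varphi}=0$ for all $t>0$ controls only the tangential part of $D_{A_\varphi}\Phi_\varphi$, not the normal derivative. What the paper actually uses is that $\Phi_\varphi|_{\partial X_\varphi}\equiv 0$ for $t>t_0$ implies $\partial_t\Phi_\varphi|_{\partial X_\varphi}=0$. With this, one integrates by parts in the form
\[
\int_{X_\varphi}\big\langle D_{A}\big(\tfrac{\partial\Phi}{\partial t}\big),D_A\Phi\big\rangle
=\int_{X_\varphi}\big\langle \tfrac{\partial\Phi}{\partial t},D_A^*D_A\Phi\big\rangle,
\]
and the boundary term drops because $\partial_t\Phi$ (not $D_A\Phi$) vanishes on $\partial X_\varphi$. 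This also explains the role of $t>t_0$; it is not there to bootstrap Schauder estimates.

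\textbf{The good term and uniformity in $\varphi$.} You propose a Bochner-type argument producing $-\int|\nabla_{A}D_A\Phi|^2$ and then rely on uniform bounds for $|F_{A_\varphi}|$ via Lemma~\ref{Lem3_7} to control the curvature commutators. That route destroys the conclusion: Schauder constants from Lemma~\ref{Lem3_7} depend on the geometry of $X_\varphi$, so the resulting $\hat C$ would depend on $\varphi$, which is exactly what the lemma forbids. The paper avoids this entirely. After one integration by parts and substituting the evolution equation \eqref{PPtFA} for $\partial_t\Phi$, the good negative terms are
\[
-4\int_{X_\varphi}|D_A^*D_A\Phi|^2\quad\text{and}\quad -8\int_{X_\varphi}\big|{\rm tr}_g[[\Phi,\theta],\theta^*]\big|^2,
\]
not the full Hessian norm. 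No curvature $F_A$ appears at all; the only pointwise coefficient needed is $\sup|\Phi|\le C_0$, and the cross terms (including those carrying the torsion $\tau=[{\rm tr}_g,\partial\omega_g]$, bounded since $\|d\omega_g\|_{L^\infty}<\infty$) are absorbed into the two good terms by Young's inequality. That is why $\hat C$ depends only on $n,r,C_0$.
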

  \begin{proof}
    For simplicity, we denote $A$ by $A_{\varphi}(t)$ and $\theta$ by $\theta_{\varphi}(t)$. And  we will omit $H_0$ if we know that the adjoint is with respect to $H_0$. By direct calculations, one can check
    \begin{equation}\label{PPtFA}
    \begin{split}
      \frac{\partial}{\partial t}\Phi&={\rm tr}_g(D_A(\frac{\partial A}{\partial t})+[\frac{\partial \theta}{\partial t},\theta^{*}]+[\theta,(\frac{\partial \theta}{\partial t})^{*}])\\
      &=-2{\rm tr}_gD_A(\partial_A-\bar{\partial}_A)\Phi+2{\rm tr}_g([\theta,[\Phi,\theta^{*}]]-[[\Phi,\theta],\theta^{*}]),
    \end{split}
    \end{equation}
  and
  \begin{equation}\label{ddIt}
    \begin{split}
    \frac{dI_{\varphi}(t)}{dt}&=\int_{X_{\varphi}}\frac{\partial}{\partial t}(|D_A\Phi|^2+2|[\Phi,\theta]|^2)\frac{\omega_g^n}{\nu}\\
    &=2{\rm Re}\int_{X_{\varphi}}\langle\frac{\partial}{\partial t}(D_A\Phi),D_A\Phi\rangle+2\langle\frac{\partial}{\partial t}[\Phi,\theta],[\Phi,\theta]\rangle\frac{\omega_g^n}{\nu}\\
    &=2{\rm Re}\int_{X_{\varphi}}\langle[\frac{\partial A}{\partial t},\Phi]+D_A(\frac{\partial \Phi}{\partial t}),D_A\Phi\rangle\frac{\omega_g^n}{\nu}\\
    &\quad +4{\rm Re}\int_{X_{\varphi}}\langle [\frac{\partial \Phi}{\partial t},\theta]+[\Phi,\frac{\partial \theta}{\partial t}],[\Phi,\theta]\rangle\frac{\omega_g^n}{\nu}.
   \end{split}
    \end{equation}
   By the parabolic equation and the Dirichlet boundary condition (\ref{Dirbc}), we have
   \begin{equation}
     \Phi(H_{\varphi}(t),\theta_0)\big|_{\partial X_{\varphi}}=0 \quad\text{for}\quad t>t_0>0.
   \end{equation}
 After a gauge transformation, we also see $\Phi(A_{\varphi}(t),\theta_{\varphi}(t))\big|_{\partial X_{\varphi}}=0$. So for $t>t_0>0$, we have
  \begin{equation}
    \frac{\partial}{\partial t}\Phi(A_{\varphi},\theta_{\varphi})|_{\partial X_{\varphi}}=0.
  \end{equation}
  Setting $l=[\Phi,\theta]$, we have
 \begin{equation}\label{tgmsn1}
 \begin{split}
   \langle [{\rm tr}_g[-l,\theta^*],\theta],l\rangle&=-\langle[g^{\alpha\beta}(l_{\alpha}\theta^*_{\beta}-\theta_{\beta}^*l_{\alpha}),\theta],l\rangle\\
   &=-g^{\alpha\beta}g^{\gamma\eta}{\rm tr}(((l_{\alpha}\theta_{\beta}^*-\theta_{\beta}^*l_{\alpha})\theta_{\gamma}-\theta_{\gamma}
   (l_{\alpha}\theta_{\beta}^*-\theta_{\beta}^*l_{\alpha}))l_{\eta}^*)\\
   &=-{\rm tr}((g^{\alpha\beta}(l_{\alpha}\theta_{\beta}^*-\theta_{\beta}^*l_{\alpha}))(g^{\eta\gamma}(l_{\eta}\theta_{\gamma}^*-\theta^*_{\gamma}l_
   {\eta}))^*)=-|{\rm tr}_g[l,\theta^*]|^2
\end{split}
    \end{equation}
 and
\begin{equation}\label{tfs}
 \begin{split}
   |[\Phi,\theta]|^2&=\langle l,\Phi\theta-\theta\Phi\rangle=g^{\alpha\beta}{\rm tr}(l_{\alpha}(\Phi\theta_{\beta})^*-l_{\alpha}(\theta_{\beta}\Phi)^*)\\
   &={\rm tr}(g^{\alpha\beta}(l_{\alpha}\theta_{\beta}^*-\theta_{\beta}^*l_{\alpha})\Phi^*)=\langle{\rm tr}_g[l,\theta^*],\Phi\rangle\\
   &\leq |\Phi||{\rm tr}_g[l,\theta^*]|.
 \end{split}
    \end{equation}
Applying the Stokes' formula again and substituting (\ref{PPtFA}) , we obtain
\begin{equation}\label{I_1}
\begin{split}
  &2{\rm Re}\int_{X_{\varphi}}\langle[\frac{\partial A}{\partial t},\Phi]+D_A(\frac{\partial \Phi}{\partial t}),D_A\Phi\rangle\frac{\omega^n_g}{\nu}\\
=&2{\rm Re}\int_{X_{\varphi}}(\langle[2(\bar{\partial}_A-\partial_A)\Phi,\Phi],D_A\Phi\rangle+
  \langle\frac{\partial}{\partial t}\Phi,D_A^*D_A\Phi\rangle)\frac{\omega^n_g}{\nu} \\
=&4{\rm Re}\int_{X_{\varphi}}\langle[(\bar{\partial}_A-\partial_A)\Phi,\Phi],D_A\Phi\rangle\frac{\omega^n_g}{\nu}
  -4\int_{X_{\varphi}}|D_A^*D_A\Phi|^2\frac{\omega^n_g}{\nu}\\
  & -4{\rm Re}\int_{X_{\varphi}}\langle(\tau + \bar{\tau})^*D_A\Phi,D_A^*D_A\Phi\rangle\frac{\omega^n_g}{\nu}\\
  & +4{\rm Re}\int_{X_{\varphi}}\langle{\rm tr}_g([\theta,[\Phi,\theta^*]]-[[\Phi,\theta],\theta^*]),D_A\Phi\rangle\frac{\omega^n_g}{\nu},
\end{split}
\end{equation}
where $\tau=[{\rm tr}_g,\partial\omega_g]$.

Combining (\ref{PPtFA}), (\ref{tgmsn1}) and (\ref{tfs}), we deduce
\begin{equation}\label{I_2}
\begin{split}
  &\quad 4{\rm Re}\int_{X_{\varphi}}\langle[\frac{\partial \Phi}{\partial t}, \theta],[\Phi,\theta]\rangle\frac{\omega^n_g}{\nu}\\
  &=8{\rm Re}\int_{X_{\varphi}}\langle[-D_A^*D_A\Phi,\theta],[\Phi,\theta]\rangle\frac{\omega^n_g}{\nu}-8{\rm Re}\int_{X_{\varphi}}\langle[(\tau+\bar{\tau})^*D_A\Phi,\theta],[\Phi,\theta]\rangle\frac{\omega^n_g}{\nu}\\
  &\quad +8{\rm Re}\int_{X_{\varphi}}\langle[{\rm tr}_g[\theta,(-[\Phi,\theta])^*],\theta],[\Phi,\theta]\rangle\frac{\omega^n_g}{\nu}\\
  &\quad +8 {\rm Re}\int_{X_{\varphi}}\langle[{\rm tr}_g[-[\Phi,\theta],\theta^*],\theta],[\Phi,\theta]\rangle\frac{\omega^n_g}{\nu}\\
  &\leq 16\int_{X_{\varphi}}|\theta||[\Phi,\theta]||D_A^*D_A\Phi|\frac{\omega^n_g}{\nu}+16C_1
  \int_{X_{\varphi}}|\theta||[\Phi,\theta]||D_A\Phi|\frac{\omega^n_g}{\nu}\\
  &\quad +C_2\int_{X_{\varphi}}|\theta|^2|[\Phi,\theta]|^2\frac{\omega^g}{\nu}-8\int_{X_{\varphi}}|{\rm tr}_g[l,\theta^*]|^2\frac{\omega^n_g}{\nu}
\end{split}
\end{equation}
and
\begin{equation}\label{I_3}
\begin{split}
  &\quad 4{\rm Re}\int_{X_{\varphi}}\langle[\Phi,\frac{\partial \theta}{\partial t}],[\Phi,\theta]\rangle\frac{\omega^n_g}{\nu}\\
  &=-8 {\rm Re}\int_{X_{\varphi}}\langle[\Phi,[\Phi,\theta]],[\Phi,\theta]\rangle\frac{\omega_g^n}{\nu}\\
  &\leq 16\int_{X_{\varphi}}|\Phi||[\Phi,\theta]|^2\frac{\omega^n_g}{\nu},
\end{split}
\end{equation}
where $C_1$ is a uniform constant determined by $||d\omega_g||_{L^{\infty}(X)}<+\infty$ and $C_2=C_2(n,r)$ only depends on $n$ and $r$.

Putting (\ref{I_1})-(\ref{I_3}) into (\ref{ddIt}), we have
\begin{equation}
\begin{split}
 \frac{dI_{\varphi}}{dt}\leq & C_2\int_{X_{\varphi}}(|\Phi||D_A\Phi|^2+(|\Phi|+|\theta|^2)|[\Phi,\theta]|^2+|\theta||[\Phi,\theta]|
  |D_A^*D_A\Phi|)\frac{\omega^n_g}{\nu}\\
  &-4\int_{X_{\varphi}}|D_A^*D_A\Phi|\frac{\omega^n_g}{\nu}-8\int_{X_{\varphi}}|{\rm tr}_g[l,\theta^*]|^2\frac{\omega_g^n}{\nu}\\
  & +16(C_1+1)\int_{X_{\varphi}}|\theta||[\Phi,\theta]||D_A\Phi|\frac{\omega_g^n}{\nu}
  +4C_1\int_{X_{\varphi}}|D_A\Phi||D_A^*D_A\Phi|\frac{\omega^n_g}{\nu}\\
  \leq &\hat{C}I_{\varphi}(t),
\end{split}
\end{equation}
where $\hat{C}=\hat{C}(n,r,C_0)$ is a uniform constant independent of $t$ and $\varphi$.
 \end{proof}

 \begin{thm}
   Let $H(t)$ be the long-time solution of the affine Hermitian-Yang-Mills flow (\ref{AHYMF2}) with the initial metric $H_0$, which is constructed in Theorem \ref{THM4_5}. Then $I(t)\rightarrow 0$ as $t\rightarrow +\infty$.
 \end{thm}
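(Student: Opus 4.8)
The plan is to follow the classical Donaldson--Simpson scheme, carried out along the exhaustion $\{X_{\varphi}\}_{\varphi=1}^{\infty}$ used to construct $H(t)$ in Theorem \ref{THM4_5}: first obtain a bound for $\int_0^{+\infty}I_{\varphi}(t)\,dt$ that is \emph{uniform} in $\varphi$, then feed it into the differential inequality of Lemma \ref{Lem4_6}, and finally pass to the limit $\varphi\to+\infty$ and invoke an elementary decay lemma. Throughout I write $e(H)=|D_{H}\Phi(H,\theta_0)|^2_{H}+2|[\Phi(H,\theta_0),\theta_0]|^2_{H}\ge 0$ for the integrand of $I$, so that by $(\ref{FHT})$ and $(\ref{DFHT})$ one has $I_{\varphi}(t)=\int_{X_{\varphi}}e(H_{\varphi}(t))\frac{\omega^n_g}{\nu}$ and $I(t)=\int_{X}e(H(t))\frac{\omega^n_g}{\nu}$.

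\textbf{Step 1 (uniform energy bound).} The computation behind Proposition \ref{PROP3_1}, applied with $\theta=\theta_0$, gives on each $X_{\varphi}$ the pointwise identity
\begin{equation*}
\Big(\frac{\partial}{\partial t}-\tilde{\Delta}\Big)|\Phi(H_{\varphi}(t),\theta_0)|^2_{H_{\varphi}(t)}=-4\,e(H_{\varphi}(t)).
\end{equation*}
Since $g$ is affine Gauduchon, $\partial\bar{\partial}\omega^{n-1}_g=0$, one has $\partial\bar{\partial}|\Phi|^2\wedge\omega^{n-1}_g=d\big((\bar{\partial}|\Phi|^2)\wedge\omega^{n-1}_g+|\Phi|^2\,\partial\omega^{n-1}_g\big)$, and since the Dirichlet datum $(\ref{Dirbc})$ is $t$-independent we get $\Phi(H_{\varphi}(t),\theta_0)|_{\partial X_{\varphi}}=0$ for $t>0$. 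Hence, by Stokes' formula as in $(\ref{bformula})$, $\int_{X_{\varphi}}\tilde{\Delta}|\Phi|^2\frac{\omega^n_g}{\nu}$ reduces to the boundary integral of $(\bar{\partial}|\Phi|^2)\wedge\omega^{n-1}_g/\nu$ over $\partial X_{\varphi}$, which is $\le 0$ because $|\Phi|^2\ge 0$ attains the value $0$ on $\partial X_{\varphi}$. Therefore $\frac{d}{dt}\int_{X_{\varphi}}|\Phi(H_{\varphi}(t),\theta_0)|^2_{H_{\varphi}(t)}\frac{\omega^n_g}{\nu}\le-4I_{\varphi}(t)$ for $t>0$; using $\sup|\Phi(H_{\varphi}(t),\theta_0)|_{H_{\varphi}(t)}\le C_0$ (maximum principle, Proposition \ref{PROP3_1}) and ${\rm Vol}(X)<+\infty$ (Assumption 1), integration in $t$ yields $\int_0^{+\infty}I_{\varphi}(t)\,dt\le\tfrac14 C_0^2\,{\rm Vol}(X)$, a bound independent of $\varphi$.

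\textbf{Step 2 (limit and conclusion).} Put $L(t):=\liminf_{\varphi\to+\infty}I_{\varphi}(t)$. Since $H_{\varphi}\to H$ in $C^{\infty}_{\rm loc}$, for each fixed $t$ and each compact $\Omega\subset X$ one has $\int_{\Omega}e(H(t))\frac{\omega^n_g}{\nu}=\lim_{\varphi}\int_{\Omega}e(H_{\varphi}(t))\frac{\omega^n_g}{\nu}\le\liminf_{\varphi}I_{\varphi}(t)$, and letting $\Omega\uparrow X$ gives $I(t)\le L(t)$; also $\int_0^{+\infty}L(t)\,dt\le\tfrac14 C_0^2{\rm Vol}(X)<+\infty$ by Fatou's lemma and Step 1. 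By Lemma \ref{Lem4_6}, $\frac{dI_{\varphi}}{dt}\le\hat{C}I_{\varphi}$ for $t>t_0$, so $e^{-\hat{C}t_2}I_{\varphi}(t_2)\le e^{-\hat{C}t_1}I_{\varphi}(t_1)$ for $t_0<t_1<t_2$; since $\liminf_{\varphi}$ preserves this inequality, $e^{-\hat{C}t}L(t)$ is non-increasing on $(t_0,+\infty)$. If $L(t)\not\to 0$, choose $t_k\to+\infty$ with $t_k>t_0+2$, $t_{k+1}>t_k+2$ and $L(t_k)\ge c$ for some $c>0$; monotonicity gives $L(s)\ge e^{-\hat{C}(t_k-s)}L(t_k)\ge e^{-\hat{C}}c$ for $s\in[t_k-1,t_k]$, so $\int_0^{+\infty}L(s)\,ds\ge\sum_k e^{-\hat{C}}c=+\infty$, a contradiction. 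Hence $L(t)\to 0$, and since $0\le I(t)\le L(t)$, also $I(t)\to 0$.

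\textbf{Main obstacle.} The substantial analytic work, Lemma \ref{Lem4_6}, is already available; the delicate points here are purely organizational: in Step 1 one must verify that the affine Gauduchon condition together with $\Phi|_{\partial X_{\varphi}}=0$ makes the boundary term genuinely nonpositive, so that the energy $\int_{X_{\varphi}}|\Phi|^2$ decays with the right constant; and in Step 2 the uniformity in $\varphi$ of the energy bound is exactly what lets one pass to the limit, via Fatou's lemma and the observation that $\liminf_{\varphi}$ preserves the monotonicity of $e^{-\hat{C}t}I_{\varphi}(t)$. No new PDE estimate on the non-compact $X$ is required beyond the $C^{\infty}_{\rm loc}$ convergence of Theorem \ref{THM4_5}.
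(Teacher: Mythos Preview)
Your proposal is correct, and in fact cleaner than the paper's argument. Both proofs rest on the same two ingredients—the differential inequality of Lemma \ref{Lem4_6} and an energy-decay statement for $\int_{X_{\varphi}}|\Phi|^{2}$—but they are organized differently. The paper first asserts the energy identity $(\ref{intxf})$, passes to the limit by Fatou to get that $\int_{X}|\Phi(A(t),\theta(t))|^{2}$ is nonincreasing (hence its unit-step differences tend to zero), and then for each $m$ uses the mean-value theorem on $[m,m+1]$ together with Lemma \ref{Lem4_6} to bound $I_{\varphi}(t)$ on $[m+1,m+2]$ by that energy difference; finally it takes $\liminf_{\varphi}$. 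You instead extract from the pointwise parabolic identity $(\partial_{t}-\tilde{\Delta})|\Phi|^{2}_{H}=-4e(H)$ the single uniform bound $\int_{0}^{\infty}I_{\varphi}(t)\,dt\le \tfrac14 C_{0}^{2}\mathrm{Vol}(X)$, and then feed Fatou and the monotonicity of $e^{-\hat{C}t}I_{\varphi}(t)$ directly into an elementary ``integrable $+$ almost-monotone $\Rightarrow$ tends to zero'' lemma. Your route avoids the somewhat delicate splitting in the paper's last display (where a residual $\int_{m}^{m+1}\!\int|[\Phi,\theta]|^{2}$ term appears), and it makes transparent exactly where the affine Gauduchon condition enters: it is precisely what reduces $\int_{X_{\varphi}}\tilde{\Delta}|\Phi|^{2}\,\omega_{g}^{n}/\nu$ to a single boundary integral, after which $|\Phi|^{2}\ge 0$ with $|\Phi|^{2}|_{\partial X_{\varphi}}=0$ forces the correct sign via the outward normal derivative. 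One small remark: the vanishing of $\Phi$ on $\partial X_{\varphi}$ holds only for $t>0$, so Step 1 should be run on $(t_{0},\infty)$ and then $t_{0}\downarrow 0$; this is harmless since $\sup|\Phi|\le C_{0}$ already controls $\int_{X_{\varphi}}|\Phi(t_{0})|^{2}$ uniformly.
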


\begin{proof}
 By (\ref{DILI}) in Lemma \ref{Lem4_6}, we know that there exists a uniform constant $\hat{C}$ such that
 \begin{equation}\label{Iftl}
   I_{\varphi}(t)\leq e^{\hat{C}(t-s)}I_{\varphi}(s)
 \end{equation}
for any $0<t_0\leq s\leq t$.

 On the other hand, there holds that
 \begin{equation}\label{intxf}
 \begin{split}
&\int_{X_{\varphi}}|\Phi(A_{\varphi}(t),\theta_{\varphi}(t))|_{H_0}^2\frac{\omega^n_g}{\nu}+2\int_{t_0}^t\int_{X_{\varphi}}|D_{A_{\varphi}}
 \Phi(A_{\varphi},\theta_{\varphi})|_{H_0}^2\frac{\omega^n_g}{\nu}ds\\
 =&\int_{X_{\varphi}}|\Phi(A_{\varphi}(t_0),\theta_{\varphi}(t_0))|_{H_0}^2\frac{\omega^n_g}{\nu}.
\end{split}
\end{equation}
 According to Fatou's Lemma, we get
 \begin{equation}
 \begin{split}
 &\int_{X}|\Phi(A(t),\theta(t))|_{H_0}^2\frac{\omega^n_g}{\nu}+2\int_{t_0}^t\liminf\limits_{\varphi\rightarrow \infty}\int_{X_{\varphi}}|D_{A_{\varphi}}\Phi(A_{\varphi},\theta_{\varphi})|_{H_0}^2\frac{\omega^n_g}{\nu}ds\\
 \leq &\int_{X}|\Phi(A(t_0),\theta(t_0))|_{H_0}^2\frac{\omega^n_g}{\nu}.
 \end{split}
 \end{equation}
 This implies that $\int_{X}|\Phi(A(t),\theta(t))|_{H_0}^2\frac{\omega^n_g}{\nu}$  is monotonically
nonincreasing with respect to $t$. Then we have
\begin{equation}\label{intxfa}
  \bigg(\int_{X}|\Phi(A(t),\theta(t))|_{H_0}^2\frac{\omega^n_g}{\nu}-\int_{X}|\Phi(A(t+1),\theta(t+1))|_{H_0}^2\frac{\omega^n_g}{\nu}\bigg)\rightarrow 0, \qquad  \text{as}\quad t \rightarrow \infty.
\end{equation}
For any $m\geq t_0>0$, there exists $t_{m}\in [m,m+1]$ such that
\begin{equation}\label{iftm}
  I_{\varphi}(t_{m})=\int_{m}^{m+1}I_{\varphi}(t)dt.
\end{equation}
From (\ref{Iftl}),(\ref{intxf}) and (\ref{iftm}), it follows that
 \begin{equation}
 \begin{split}
  I_{\varphi}(t)\leq & e^{2\hat{C}}I_{\varphi}(t_{m})=e^{2\hat{C}}\int_m^{m+1}I_{\varphi}(t)dt\\
  =&\frac{e^{2\hat{C}}}{2}(\int_{X_{\varphi}}|\Phi(A_{\varphi}(m),\theta_{\varphi}(m))|_{H_0}^2\frac{\omega^n_g}{\nu}-\int_{X_{\varphi}}|\Phi(A_
  {\varphi}(m+1),\theta_{\varphi}(m+1))|_{H_0}^2\frac{\omega^n_g}{\nu})\\
  &+e^{2\hat{C}}\int_m^{m+1}\int_{X_{\varphi}}|[\Phi(A_{\varphi},\theta_{\varphi}),\theta_{\varphi}]|^2\frac{\omega^n_g}{\nu}dt
\end{split}
\end{equation}
for any $t\in [m+1,m+2]$. Applying Fatou's Lemma again, we drive
\begin{equation}
 \begin{split}
  I(t)\leq & \liminf\limits_{\varphi\rightarrow \infty}I_{\varphi}(t)\\
  \leq & \liminf\limits_{\varphi\rightarrow \infty}\frac{e^{2\hat{C}}}{2}(\int_{X_{\varphi}}|\Phi(A_{\varphi}(m),\theta_{\varphi}(m))|_{H_0}^2\frac{\omega^n_g}{\nu}-\int_{X_{\varphi}}|\Phi(A_
  {\varphi}(m+1),\theta_{\varphi}(m+1))|_{H_0}^2\frac{\omega^n_g}{\nu})\\
  =&\frac{e^{2\hat{C}}}{2}(\int_{X}|\Phi(A(m),\theta(m))|_{H_0}^2\frac{\omega^n_g}{\nu}-\int_{X}|\Phi(A(m+1),\theta(m+1))
  |_{H_0}^2)\\
  &+e^{2\hat{C}}\int_m^{m+1}\int_{X_{\varphi}}|[\Phi(A_{\varphi},\theta_{\varphi}),\theta_{\varphi}]|^2\frac{\omega^n_g}{\nu}dt
\end{split}
\end{equation}
for any $t\in [m+1,m+2]$. Together with (\ref{intxfa}), it means that $I(t)\rightarrow 0$ as $t\rightarrow +\infty$.
\end{proof}

\section{Stable case}

Let $(X,D,g,\nu)$ be a non-compact special affine Gauduchon manifold satisfying the Assumptions 1,2,3, and $|d\omega^{n-1}_g|_g\in L^{\infty}(X)$, $(E,\nabla,\theta)$ be a flat Higgs bundle over $X$. Fix a proper background Hermitian metric $H_0$ satisfying $\sup\limits_{X}|{\rm tr}_gF_{H_0,\theta}|_{H_0}<+\infty$ on $E$. According to \protect{\cite[Proposition 4.3]{ZZZ}}, we can solve the following Poisson equation on $(X,D,g,\nu)$:
\begin{equation}
  {\rm tr}_g\bar{\partial}\partial f=-\frac{1}{r}{\rm tr}({\rm tr}_gF_{H_0,\theta}-\lambda\cdot {\rm Id}_{E}),
\end{equation}
where
\begin{equation*}
  \lambda=\frac{\int_{X}{\rm tr}({\rm tr}_gF_{H_0,\theta})\frac{\omega^n_g}{\nu}}{{\rm rank}(E){\rm Vol}(X)}.
\end{equation*}
By a conformal change $\bar{H}_0=e^{f}H_0$, we can check that $\bar{H}_0$ satisfies
\begin{equation}\label{TTgOMC}
  {\rm tr}({\rm tr}_g(F_{\bar{H}_0}+[\theta,\theta^{*{\bar{H}_0}}])-\lambda\cdot {\rm Id}_{E})=0.
\end{equation}
So without loss of generality, we can assume that the initial metric $H_0$ satisfies the condition (\ref{TTgOMC}). Let $H(t)$ be the long-time solution of the affine Hermitian-Yang-Mills flow (\ref{AHYMF2}) with the initial metric $H_0$. Set $h(t)=H_0^{-1}H(t)=e^{s(t)}$. Then it follows that
\begin{equation}
  \log\det(h(t))={\rm tr}(s(t))=0.
\end{equation}

\begin{lem}\label{Lem5_1}
Suppose $X$ is a non-compact special affine Gauduchon manifold as before, then there are constants $C_1$ and $C_2$ such that
\begin{equation}\label{SUPST}
  \sup\limits_{X}|s(t)|\leq C_1 ||s(t)||_{L^2(X)}+C_2.
\end{equation}
\end{lem}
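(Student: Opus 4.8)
The plan is to show that, for each fixed $t$, the function $f:=|s(t)|_{H_0}$ is bounded and non-negative on $X$ and satisfies ${\rm tr}_g\partial\bar\partial f\ge -B/4$ for a constant $B$ that does \emph{not} depend on $t$, and then to invoke Assumption 3 together with Assumption 1. For the boundedness: $H(t)$ was produced in Theorem \ref{THM4_5} as the $C^{\infty}_{\rm loc}$-limit of the Dirichlet solutions $H_{\varphi}(t)$ on the exhausting domains $X_{\varphi}$, so the uniform estimate $\sup_{X_{\varphi}}\sigma(H_0,H_{\varphi}(t))\le 2r(e^{4C_0t}-1)$ in (\ref{FTC_0}) passes to the limit and gives $\sup_X\sigma(H_0,H(t))<+\infty$. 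Since $h(t)=e^{s(t)}$ is self-adjoint with respect to $H_0$, this pins the eigenvalues of $h(t)$ inside a compact subset of $(0,+\infty)$, whence $f=|s(t)|_{H_0}\in L^{\infty}(X)$. If $s(t)\equiv0$ there is nothing to prove, so assume $f\not\equiv0$.

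The core step is a Bochner-type inequality for $|s(t)|$. Working in the affine Dolbeault calculus of Section \ref{Pre_res} and using the identities $\partial_H-\partial_{H_0}=h^{-1}\partial_{H_0}h$, $F_H-F_{H_0}=\bar\partial(h^{-1}\partial_{H_0}h)$, $\theta^{*H}=h^{-1}\theta^{*H_0}h$ exactly as in the proof of Proposition \ref{PROP3_1}, one gets, as in Simpson \cite{Sim88},
\begin{equation*}
\tilde{\Delta}\,|s(t)|_{H_0}^2\ \ge\ 2\,|\nabla_{H_0}s(t)|_{H_0}^2-C\big(|\Phi(H_0,\theta)|_{H_0}+|\Phi(H(t),\theta)|_{H(t)}\big)\,|s(t)|_{H_0},
\end{equation*}
with $C=C(n,r)$. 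Here $|\Phi(H_0,\theta)|_{H_0}\le C_0$ by the assumption on the background metric, and $|\Phi(H(t),\theta)|_{H(t)}\le C_0$ uniformly on $X\times[0,+\infty)$, the latter obtained by combining Proposition \ref{PROP3_1} with the maximum principle on each $X_{\varphi}$ and then passing to the $C^{\infty}_{\rm loc}$-limit. Combining $\tilde{\Delta}|s|^2=2|s|\,\tilde{\Delta}|s|+2|d|s||^2$ with Kato's inequality $|d|s||^2\le|\nabla_{H_0}s|_{H_0}^2$, one deduces $\tilde{\Delta}f\ge -B$ with $B:=2CC_0$, first away from the zero set of $s(t)$ and then distributionally on all of $X$ (the zero set being absorbed in the usual way, e.g. by working with $(|s(t)|^2+\varepsilon)^{1/2}-\varepsilon^{1/2}$ and letting $\varepsilon\to0$). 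Equivalently, ${\rm tr}_g\partial\bar\partial f\ge -B/4$.

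With $f$ bounded, non-negative and ${\rm tr}_g\partial\bar\partial f\ge -B/4$, Assumption 3 yields
\begin{equation*}
\sup_X|s(t)|\ =\ \sup_X f\ \le\ C(B)\,a\!\left(\int_X|s(t)|\,\frac{\omega_g^n}{\nu}\right).
\end{equation*}
By Cauchy--Schwarz and the finiteness of ${\rm Vol}(X)$ (Assumption 1),
\begin{equation*}
\int_X|s(t)|\,\frac{\omega_g^n}{\nu}\ \le\ {\rm Vol}(X)^{1/2}\,\|s(t)\|_{L^2(X)}.
\end{equation*}
If $\int_X|s(t)|\frac{\omega_g^n}{\nu}\ge1$ then $a$ acts as the identity on it, so $\sup_X|s(t)|\le C(B){\rm Vol}(X)^{1/2}\|s(t)\|_{L^2(X)}$; if $\int_X|s(t)|\frac{\omega_g^n}{\nu}<1$ then, $a$ being increasing with $a(1)\le1$, we get $\sup_X|s(t)|\le C(B)$. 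Setting $C_1:=C(B){\rm Vol}(X)^{1/2}$ and $C_2:=C(B)$ proves (\ref{SUPST}) with constants independent of $t$.

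The main difficulty is concentrated in the second step: carrying out the Bochner computation in the affine (rather than complex) setting, treating the degeneracy locus of $s(t)$ carefully, and verifying the uniform-in-$t$ curvature bound $\sup_{X\times[0,+\infty)}|\Phi(H(t),\theta)|_{H(t)}\le C_0$ by passing Proposition \ref{PROP3_1} and the maximum principle through the exhaustion $\{X_{\varphi}\}$. Once $\tilde{\Delta}f\ge -B$ is established, the rest is a formal consequence of Assumption 3 and Assumption 1.
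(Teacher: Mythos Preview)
Your strategy---establish a lower bound $\tilde\Delta f\ge -B$ with $B$ independent of $t$ for a function $f$ comparable to $|s(t)|$, then invoke Assumption~3 and pass from $L^1$ to $L^2$ via Cauchy--Schwarz and Assumption~1---is exactly the paper's. The difference is in the choice of $f$. The paper does \emph{not} work with $|s|$ directly; it takes $f=\log\bigl({\rm tr}\,h(t)+{\rm tr}\,h^{-1}(t)\bigr)$, for which the differential inequality
\[
\tilde\Delta\,\log\bigl({\rm tr}\,h+{\rm tr}\,h^{-1}\bigr)\ \ge\ -2\bigl(|{\rm tr}_g F_{H(t),\theta}|_{H(t)}+|{\rm tr}_g F_{H_0,\theta}|_{H_0}\bigr)
\]
is precisely Simpson's Lemma~3.1(d) (and follows from the trace identities (\ref{THMHMMK})--(\ref{THIMKMMH}) already recorded in the paper). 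One then uses the elementary comparability
\[
\log\!\Bigl(\tfrac{1}{2r}({\rm tr}\,h+{\rm tr}\,h^{-1})\Bigr)\le |\log h|\le r^{1/2}\log({\rm tr}\,h+{\rm tr}\,h^{-1})
\]
to transfer the $\sup$-estimate and the $L^1$-integral back to $|s|$.

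The point to watch in your write-up is the Bochner step. The inequality you display,
\[
\tilde\Delta\,|s|_{H_0}^2\ \ge\ 2|\nabla_{H_0}s|_{H_0}^2-C\bigl(|\Phi(H_0,\theta)|_{H_0}+|\Phi(H,\theta)|_{H}\bigr)\,|s|_{H_0},
\]
is \emph{not} what Simpson's Lemma~3.1 states; Simpson's inequality is on $\log{\rm tr}\,h$, not on $|s|^2$. Because $h=e^{s}$, the quantities $\bar\partial_E\partial_{H_0}s$ and $\bar\partial_E(h^{-1}\partial_{H_0}h)=F_H-F_{H_0}$ differ by nonlinear terms in $s$ and $\nabla s$, so ``as in Simpson'' is not an accurate citation here; you would need to carry out a separate computation (keeping track of the $\Psi(s)$-type operators) to justify your displayed inequality with a constant $C$ that does not depend on $\sup_X|s|$. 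The paper's route through $\log({\rm tr}\,h+{\rm tr}\,h^{-1})$ sidesteps this, since the required inequality is linear in $h$ and $h^{-1}$ and comes straight from (\ref{THMHMMK})--(\ref{THIMKMMH}). Everything else in your argument---the $L^\infty$ boundedness of $s(t)$ via (\ref{FTC_0}), the uniform curvature bound from Proposition~\ref{PROP3_1} passed through the exhaustion, and the final $L^1\to L^2$ step---matches the paper.
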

\begin{proof}
From  \protect{\cite[Lemma 3.1(d)]{Sim88}}, we have
  \begin{equation}
  \tilde{\Delta}\log({\rm tr}h(t)+{\rm tr}h^{-1}(t))\geq -2(|{\rm tr}_gF_{H(t),\theta}|_{H(t)}+|{\rm tr}_gF_{H_0}|_{H_0}).
  \end{equation}
  On the basis of Proposition \ref{PROP3_1}, it is easy to see that $|{\rm tr_g}F_{H(t),\theta}|_{H(t)} $ is uniformly bounded. On the other hand, we know
  \begin{equation}
    \log(\frac{1}{2r}({\rm tr}h(t)+{\rm tr}h^{-1}(t)))\leq |\log h(t)|\leq r^{\frac{1}{2}}\log({\rm tr}h(t)+{\rm tr}h^{-1}(t)).
  \end{equation}
  Then  Assumption $3$ implies (\ref{SUPST}).
\end{proof}

{\bf Proof of Theorem \ref{Maintheorem}}

When $(E,\nabla,\theta)$ is stable, we will show that, by choosing a subsequence, $H(t_i)$ converge to an affine Hermitian-Einstein metric $H_\infty$ in $C_{\text{loc}}^{\infty}$ as $t_i\rightarrow +\infty$. Clearly Proposition \ref{LC_1} and  the standard elliptic estimates mean that we only need to obtain a uniform $C^0$-estimate. By Lemma \ref{Lem5_1}, the key is to get a uniform $L^2$-estimate for $\log h(t)$, i.e.  there exists a constant $\hat{C}$ independent of $t$ such that
\begin{equation}\label{L2oLogh}
  ||\log h(t)||_{L^2}=\left(\int_X|\log h(t)|^2_{H(t)}\frac{\omega^n_g}{\nu}\right)^{\frac{1}{2}}\leq \hat{C}
\end{equation}
for all $t>0$. We prove (\ref{L2oLogh}) by contraction. If not, there would exist a subsequence $t_i\rightarrow +\infty$, such that
\begin{equation}
  \lim\limits_{i\rightarrow +\infty}||s(t_{i})||_{L^2}=+\infty
\end{equation}
and
\begin{equation}
  \varliminf_{i\rightarrow +\infty}\frac{d}{dt}||s(t)||_{L^2}\big|_{t=t_i}\geq 0.
\end{equation}
In fact, there must exist a subsequence $\hat{t}_i\rightarrow +\infty$, such that
\begin{equation}
  \lim\limits_{i\rightarrow +\infty}||s(\hat{t}_i)||_{L^2}=+\infty
\end{equation}
and
\begin{equation}
  ||s(\hat{t}_i)||_{L^2}<||s(\hat{t}_{i+1})||_{L^2}
\end{equation}
for all $\hat{t}_i<\hat{t}_{i+1}$. Denote $\tilde{t}_i=\max\{t\in [\hat{t}_i,\hat{t}_{i+1}]\,\big| \, ||s(t)||_{L^2}\leq ||s(\hat{t}_i)||_{L^2}\}$, it is easy to see that $\tilde{t_i}<\hat{t}_{i+1}$, then there exist $t_i\in[\tilde{t}_i,\hat{t}_{i+1}]$ such that
\begin{equation}
  \frac{d}{dt}||s(t)||_{L^2}\big|_{t=t_i}=\frac{||s(\hat{t}_{i+1})||_{L^2}-||s(\tilde{t}_i)||_{L^2}}{\hat{t}_{i+1}-\tilde{t}_i}
>0
\end{equation}
and $||s(t_i)||_{L^2}\geq||s(\hat{t}_i)||_{L^2}$. So we obtain a sequence $t_i \in (\hat{t}_i,\hat{t}_{i+1})$ such that $||s(t_i)||_{L^2}>||s(\hat{t}_i)||_{L^2}$ and $(\frac{d}{dt}||s(t)||_{L^2})|_{t=t_{i}}>0$.
Set
\begin{equation*}
  s(t_i)=\log h(t_i),\quad l_i=||s(t_i)||_{L^2},\quad u_i=\frac{s(t_i)}{l_i}.
\end{equation*}
Then $ {\rm tr}(u_i)=0,\quad ||u_i||_{L^2}=1$, and
\begin{equation}\label{SUPu_i}
\sup|u_i|\leq \frac{1}{l_i}(C_1l_i+C_2)<C_3<+\infty.
\end{equation}

$\bullet$ {\bf Step 1:} We show that $u_i$ converge to $u_{\infty}$  in $L_1^2$ weakly as $i\rightarrow +\infty$. We need to show that $||u_i||_{L_1^2}$ are uniformly bounded. Since $||u_i||_{L^2}=1$, it is enough to prove $||D''u_i||_{L^2}$ are uniformly bounded.

Noting (\ref{KI}) and applying to the flow equation (\ref{AHYMF2}), one can obtain
\begin{equation}
\begin{split}
  &\int_X{\rm tr}\left(\Phi(H_0,\theta)s(t)\right)+\langle\Psi(s(t))(D''s(t)),D''s(t)\rangle_{H_0}\frac{\omega^n}{\nu}\\
  =&\int_X{\rm tr}(\Phi(H,\theta)s(t))\frac{\omega^n_g}{\nu}
  =-\frac{1}{4}\int_{X}{\rm tr}(H^{-1}(t)\frac{\partial H(t)}{\partial t}s(t))\frac{\omega^n_g}{\nu}\\
  =&-\frac{1}{4}\int_X{\rm tr}(\frac{\partial s(t)}{\partial t}s(t))\frac{\omega^n_g}{\nu}
  =-\frac{1}{8}\frac{d}{dt}\int_X{\rm tr}(s^2(t))\frac{\omega^n_g}{\nu}\\
  =&-\frac{1}{8}\frac{d}{dt}||s(t)||_{L^2}^2.
\end{split}
\end{equation}
Then
\begin{equation}\label{IIne}
\int_X{\rm tr}\left(\Phi(H_0,\theta)u_i\right)+l_i\langle\Psi(l_iu_i)(D''u_i),D''u_i\rangle_{H_0}\frac{\omega^n_g}{\nu}=-\frac{1}{4}
\frac{d}{dt}||s(t_i)||_{L^2}<0.
\end{equation}
Consider the function
\begin{equation*}
  l\Psi(lx,ly)=\left\{\begin{split}
  &l,\ \ &x=y,\\
  &\frac{e^{l(y-x)}-1}{y-x},\ \ &x\neq y.
\end{split}\right.
\end{equation*}
Because of (\ref{SUPu_i}), we may assume that $(x,y)\in [C_3,C_3]\times[-C_3,C_3]$. It is easy to check that
\begin{equation}\label{LPlxly}
l\Psi(lx,ly)\rightarrow \left\{\begin{split}
  &\frac{1}{x-y},&x>y,\\
  &+\infty, &x\leq y,
\end{split}\right.
\end{equation}
increases monotonically as $l \rightarrow +\infty$. Let $\zeta \in C^{\infty}(\mathbb{R}\times \mathbb{R},\mathbb{R}^+)$ satisfy $\zeta(x,y)<\frac{1}{x-y}$ whenever $x>y$. Together with (\ref{LPlxly}), (\ref{IIne}) gives us that
\begin{equation}\label{NIIneq}
\int_X{\rm tr}\left(\Phi(H_0,\theta)u_i\right)\frac{\omega^n_g}{\nu}+\int_X\langle\zeta(u_i)(D''u_i),D''u_i\rangle_{H_0}\frac{\omega^n_g}{\nu}\leq 0,\qquad i\gg 0.
\end{equation}
In particular, take $\zeta(x,y)=\frac{1}{3C_3}$. It is obvious that when $(x,y)\in [-C_3,C_3]\times[-C_3,C_3]$ and $x>y$, $\frac{1}{3C_3}<\frac{1}{x-y}$. This implies
\begin{equation}
\int_X{\rm tr}\left(\Phi(H_0,\theta)u_i\right)\frac{\omega^n_g}{\nu}+\frac{1}{3C_3}\int_X|D''u_i|_{H_0}^2\frac{\omega^n_g}{\nu}\leq 0,
\end{equation}
for $i\gg 0$. Then
\begin{equation}
  \int_X|D''u_i|_{H_0}^2\frac{\omega^n_g}{\nu}\leq 3C_3^2\sup\limits_{X}|\Phi(H_0,\theta)|_{H_0}{\rm Vol}(X).
\end{equation}
Thus, $u_i$ are bounded in $L_1^2$. We can choose a subsequence $\{u_{i_j}\}$ such that $u_{i_j}\rightharpoonup u_{\infty}$ weakly in $L_1^2$, still denoted by $\{u_i\}_{i=1}^{\infty}$ for simplicity. Noting that $L_1^2\hookrightarrow L^2$, we have
\begin{equation}
1=\int_X|u_i|_{H_0}^2\frac{\omega^n_g}{\nu}\rightarrow \int_X|u_{\infty}|_{H_0}^2\frac{\omega^n_g}{\nu}, \quad \text{as}\quad i\rightarrow +\infty.
\end{equation}
This indicates that $||u_{\infty}||_{L^2}=1$ and $u_{\infty}$ is non-trivial. Moreover, ${\rm tr}u_{\infty}=0$. Using (\ref{NIIneq}) and following a similar discussion as that in \protect{\cite[Lemma5.4]{Sim88}}, we deduce
\begin{equation}\label{IIneouI}
  \int_X{\rm tr}\left(\Phi(H_0,\theta)u_{\infty}\right)\frac{\omega^n_g}{\nu}+\int_X\langle\zeta(u_{\infty})(D''u_{\infty}),D''u_{\infty}\rangle_{H_0}
\frac{\omega^n_g}{\nu}\leq 0.
\end{equation}

$\bullet$ {\bf Step 2} Using Uhlenbeck and Yau's trick \cite{UY86} and Loftin's argument \cite{Loftin09}, we can construct a  flat Higgs sub-bundle which contradicts the stability of $(E,\nabla,\theta)$.

By (\ref{IIneouI})  and the same argument  in \protect{\cite[Lemma 5.4]{Sim88}}, we know that the eigenvalues of $u_{\infty}$ are constant almost everywhere. Let $\mu_1<\mu_2<\cdots<\mu_l$ be the distinct eigenvalues of $u_{\infty}$. The fact that ${\rm tr}(u_{\infty})=0$ and $||u_{\infty}||_{L^2}=1$ forces $2\leq l\leq r$. For each $\mu_{\alpha}(1\leq \alpha\leq l-1)$, we construct a function $P_{\alpha}:\mathbb{R}\rightarrow\mathbb{R}$ such that
\begin{equation*}
P_{\alpha}=\left\{\begin{split}
  1,\ \ &x\leq \mu_{\alpha},\\
  0,\ \ &x\geq \mu_{\alpha +1}.
\end{split}\right.
\end{equation*}
Setting $\pi_{\alpha}=P_{\alpha}(u_{\infty})$, by \protect{\cite[Proposition 3.19]{BLS13}}, we have
\begin{enumerate}
  \item[(i)]$\pi_{\alpha}\in L_1^2$;
  \item[(ii)]$\pi_{\alpha}^2=\pi_{\alpha}=\pi_{\alpha}^{*H_0}$;
  \item[(iii)]$({\rm Id}_E-\pi_{\alpha})\bar{\partial}\pi_{\alpha}=0$;
  \item[(iv)]$({\rm Id}_E-\pi_{\alpha})[\theta, \pi_{\alpha}]=0$,
\end{enumerate}
and $\{\pi_{\alpha}\}_{\alpha=1}^{l-1}$ determine $(l-1)$ flat Higgs sub-bundles of $E$. Set $E_{\alpha}=\pi_{\alpha}(E)$. From ${\rm tr}(u_{\infty})=0$ and $u_{\infty}=\mu_l\cdot {\rm Id}_E-\sum\limits_{\alpha=1}^{l-1}(\mu_{\alpha+1}-\mu_{\alpha})\pi_{\alpha}$, it holds that
\begin{equation}\label{MLrankE}
  \mu_{l}\cdot{\rm rank}(E)=\sum\limits_{\alpha=1}^{l-1}(\mu_{\alpha+1}-\mu_{\alpha}){\rm rank}(E).
\end{equation}
Construct
\begin{equation}
\gamma=\mu_{l}{\rm deg}(E,H_0)-\sum_{\alpha=1}^{l-1}(\mu_{\alpha+1}-\mu_{\alpha}){\rm deg}(E_{\alpha},H_0).
\end{equation}
Substituting (\ref{MLrankE}) into $\gamma$, we obtain
\begin{equation}\label{gamma}
\gamma=\sum\limits_{\alpha=1}^{l-1}(\mu_{\alpha+1}-\mu_{\alpha}){\rm rank}(E_{\alpha})\left(\frac{{\rm deg}(E,H_0)}{{\rm rank}(E)}-\frac{{\rm deg}(E_{\alpha},H_0)}{{\rm rank}(E_{\alpha})}\right).
\end{equation}
Applying the same argument as that in \cite{Loftin09}, by the Chern-Weil formula, we have
\begin{equation}\label{DEGEH0}
{\rm deg}(E_{\alpha},H_0)=\frac{1}{n}\int_X({\rm tr}\left(\pi_{\alpha}{\rm tr}_gF_{H_0,\theta}\right)-|D''\pi_{\alpha}|^2)\frac{\omega^n_g}{\nu}.
\end{equation}
On the other hand, putting (\ref{DEGEH0}) into $\gamma$, we have
\begin{equation}
\begin{split}
  \gamma=&\frac{\mu_l}{n}\int_X{\rm tr}({\rm tr}_gF_{H_0,\theta})\frac{\omega^n_g}{\nu}-\frac{1}{n}\sum\limits_{\alpha=1}^{l-1}(\mu_{\alpha+1}-\mu_{\alpha})
  \int_X({\rm tr}(\pi_{\alpha}{\rm tr}_gF_{H_0,\theta})-|D''\pi_{\alpha}|^2)\frac{\omega^n_g}{\nu}\\
  =&\frac{1}{n}\int_X{\rm tr}((\mu_l\cdot{\rm Id}_E-\sum\limits_{\alpha=1}^{l-1}(\mu_{\alpha+1}-\mu_{\alpha})\pi_{\alpha})({\rm tr}_gF_{H_0,\theta}))\frac{\omega^n_g}{\nu}\\
  &+\frac{1}{n}\sum\limits_{\alpha=1}^{l-1}(\mu_{\alpha+1}-\mu_{\alpha})\int_X|D''\pi_{\alpha}|^2\frac{\omega^n_g}{\nu}\\
  =&\frac{1}{n}\int_X({\rm tr}(u_{\infty}{\rm tr}_gF_{H_0,\theta})+\langle\sum\limits_{\alpha=1}^{l-1}(\mu_{\alpha+1}-\mu_{\alpha})(dP_{\alpha})^2(u_{\infty})(D''u_{\infty}),
  D''u_{\infty}\rangle_{H_0})\frac{\omega^n_g}{\nu},
\end{split}
\end{equation}
where the function $dP_{\alpha}:\mathbb{R}\times \mathbb{R}\rightarrow \mathbb{R}$ is defined by
\begin{equation}
  dP_{\alpha}(x,y)=\left\{\begin{split}
  &\frac{P_{\alpha(x)}-P_{\alpha}(y)}{x-y},&x\neq y,\\
  &P_{\alpha}'(x),&x=y.
\end{split}\right.
\end{equation}
One can easily check that
\begin{equation}
  \sum\limits_{\alpha=1}^{l-1}(\mu_{\alpha+1}-\mu_{\alpha})(dP_{\alpha})^2(\mu_{\beta}-\mu_{\gamma})=
|\mu_{\beta}-\mu_{\gamma}|^{-1}, \quad \text{if} \quad \mu_{\beta}\neq \mu_{\gamma}.
\end{equation}
 Using (\ref{IIneouI}), we derive
\begin{equation}\label{GLTZ}
\begin{split}
  \gamma&=\frac{1}{n}\int_X({\rm tr}(u_{\infty}{\rm tr}_gF_{H_0,\theta})+\langle\sum\limits_{\alpha=1}^{l-1}(\mu_{\alpha+1}-\mu_{\alpha})(dP_{\alpha})^2(u_{\infty})(D''u_{\infty}),
  D''u_{\infty}\rangle_{H_0})\frac{\omega^n_g}{\nu}\\
  &\leq 0.
  \end{split}
\end{equation}
Then (\ref{gamma}) means
\begin{equation}
  \sum\limits_{\alpha=1}^{l-1}(\mu_{\alpha+1}-\mu_{\alpha}){\rm rank}(E_{\alpha})\left(\frac{{\rm deg}(E,H_0)}{{\rm rank}(E)}-\frac{{\rm deg}(E_{\alpha},H_0)}{{\rm rank}(E_{\alpha})}\right)\leq 0,
\end{equation}
which contradicts the stability of $E$.

\vspace{5mm}
Since we have proved that $H(t_i)$ converge to the metric $H_{\infty}$ in $C_{\rm loc}^{\infty}$ as $t_i \to +\infty$, it remains for us to show that the limit metric $H_{\infty}$ is an affine Hermitian-Einstein metric.
We will prove it by using $I(t)\rightarrow 0$ as  $t \rightarrow +\infty$.

Firstly, we can easily show that $\deg(E,H_0)=\deg(E,H_{\infty})$ under above assumptions. Indeed, set $h_\infty=H_0^{-1}H_{\infty}$, then we
\begin{equation}
 \begin{split}
  &\quad \deg(E,H_{\infty})-\deg(E,H_0)=\int_X\frac{\bar{\partial}\partial \log\det (h_\infty)\wedge \omega_g^{n-1}}{\nu}\\
  &=\int_X\frac{\bar{\partial}(\partial\log\det (h_\infty)\wedge \omega_g^{n-1})}{\nu}+
  \frac{\partial(\log\det (h_\infty)\cdot \bar{\partial}\omega_g^{n-1})}{\nu}-\frac{\log\det (h_\infty)\cdot \partial\bar{\partial}\omega_g^{n-1}}{\nu}\\
  &=(-1)^{n-1}\int_Xd(\frac{\partial \log\det (h_\infty)\wedge \omega_g^{n-1}}{2\nu})+\int_Xd(\frac{\log\det (h_\infty)\cdot \bar{\partial}\omega_g^{n-1}}{2\nu})\\
  &=0,
\end{split}
\end{equation}
where we have used Lemma \ref{SFLem} to show that the last two terms vanish. The first term vanishes because  $||D''(\log(H_0^{-1}H(t)))||_{L^2}$ is uniformly bounded and the second vanishes because $|d\omega_g^{n-1}|\in L^{\infty}(X)$ and $|\log\det (h_\infty)|\in L^{\infty}(X)$.

From $I(t)\rightarrow 0$ as $t\rightarrow +\infty$, we know that
\begin{equation}
\left\{\begin{split}
    D_{H_{\infty}}\Phi_{\infty}=0,\\
    [\Phi_{\infty},\theta_{\infty}]=0,
  \end{split}\right.
\end{equation}
where $\Phi_{\infty}=\Phi(H_{\infty},\theta_{\infty})$. Since $\Phi_{\infty}$ is parallel and $\Phi_{\infty}^{*H_{\infty}}=\Phi_{\infty}$, we can decompose $(E_{\infty},H_{\infty},\theta_{\infty})$ according to the real eigenvalues $\lambda_1\cdots \lambda_{k}$ of $\Phi_{\infty}$, i.e.
\begin{equation}
  (E_{\infty},H_{\infty},\theta_{\infty})=\bigoplus_{i=1}^k (E^i_{\infty},H_{\infty}|_{E_{\infty}^i},\theta_{\infty}|_{E^i_{\infty}}).
\end{equation}
Set $H^i=H_{\infty}|_{E^i_{\infty}}$ and $\Phi_i=\Phi_{\infty}|E^i_{\infty}$, then we have
\begin{equation}
  \Phi_i=\lambda_i {\rm Id}_{E^i_{\infty}}.
\end{equation}
If $k=1$, we are done. Otherwise,  it contradicts the $H_0$-stability of $E$. In fact, since $E^i_{\infty}$ is mutually orthogonal flat sub-bundle of $E$ with respect to $H_{\infty}$, we have
\begin{equation}
  \deg(E,H_0)=\sum\limits_{i=1}^{k}\deg(E^i_{\infty},H^i).
\end{equation}
So there exists  an $i_0$ such that $\mu_g(E,H_0)\leq \mu_g(E_{\infty}^{i_0},H^{i_0})$. Then a contradiction occurs.

\vskip 1 true cm

%-----------------------------------------------------------------------------
%-----------------------------------------------------------------------------

\end{document}